\documentclass[12pt]{amsart}
\usepackage{enumerate,amssymb,amsfonts,latexsym,psfrag}%,
\usepackage{amscd,amsmath}
\usepackage[dvips]{graphicx,epsfig}%,psfig}

\usepackage{graphics}
\usepackage[all]{xy}
\usepackage{url}
\usepackage{fullpage}
\usepackage{subfig}
\usepackage{multirow,array}
\usepackage{color}
\usepackage{cancel}
\usepackage{color}
%\usepackage{ed}

%%%%%%%%%%%%%%%%%%%%%%%%%%%%%%%%%%%%%%%%%%%%%%%%%%%%%%%%%%%%%%%%%%%%%%%%%

\usepackage{titlesec}
%%%%%%%%%%%%%%%%%%%%% This is for the section title format with titlesec package

%\titlelabel{\thetitle.\quad}
\titleformat{\section}
  {\Large\bfseries\center}{\thesection.}{0.5em}{}
  \titlespacing*{\section} {0pt}{1em}{2.3ex plus .2ex}
\titlelabel{\thetitle.\quad}
\titleformat{\subsection}
  [runin]{\normalfont\bfseries}{\thesubsection.}{0.5em}{}[.]
\titlespacing*{\subsection}{0pt}{1em}{2.3ex plus .2ex}

%%%%%%%%%%%%%%%%%%%%%%%%%%%%%%%%%%%%%%%%%%%%%%%%%%%%%%%%%%%%%%%%%%%%%%%%%%%%%%%%5

\usepackage{titletoc}
\titlecontents{section}[1.5em]
  {\normalfont\bfseries}
  {\contentslabel{1.5em}}%change the argument to obtain the 
                         %desired spacing
  {\hspace*{-2.3em}}
 {\titlerule*[1pc]{}\contentspage}
%%%%%%%%%%%%%%%%%%%%%%%%%%%%%%%%%%%%%%%%%%%%%%%%%%%%%%%%%%%%%%%%%%%%%%%%%%

\bibliographystyle{amsalpha}
\baselineskip=24pt
% ***********************************************************
% *** The following two lines take out the picture files! ***
%\input{psfig}
%\def\psfig#1{}
%\usepackage[dvips]{epsfig}

\newtheorem{thm}{Theorem}[section]
\newtheorem{cor}[thm]{Corollary}
\newtheorem{lem}[thm]{Lemma}
\newtheorem{prop}[thm]{Proposition}

\theoremstyle{remark}
\newtheorem{rem}{Remark}[section]

\theoremstyle{definition}
\newtheorem{defn}{Definition}[section]
\newtheorem{example}[thm]{Example}

\numberwithin{equation}{section}
\numberwithin{figure}{section}

%\newcommand{\figref}[1]{Figure~\ref{#1}}

%{\begin{center}\includegraphics[width=#1\textwidth]{#2}\end{center}}

\font\nt=cmr7

\def\note#1
{\marginpar
{\nt $\leftarrow$
\par
\hfuzz=20pt \hbadness=9000 \hyphenpenalty=-100 \exhyphenpenalty=-100
\pretolerance=-1 \tolerance=9999 \doublehyphendemerits=-100000
\finalhyphendemerits=-100000 \baselineskip=6pt
#1}\hfuzz=1pt}

\newcommand{\di}{\partial}

\newcommand{\ra}{\rightarrow}

\def\lra{\longrightarrow}

\def\ssk{\smallskip}
\def\msk{\medskip}
\def\bsk{\bigskip}

\def\nin{\noindent}

\newcommand{\diam}{\operatorname{diam}}
\newcommand{\dist}{\operatorname{dist}}

\newcommand{\id}{\operatorname{id}}

\newcommand{\const}{\mathrm{const}}

\newcommand{\eps}{{\varepsilon}}

\newcommand{\de}{{\delta}}
\newcommand{\la}{{\lambda}}
\newcommand{\La}{{\Lambda}}
\newcommand{\si}{{\sigma}}

\newcommand{\II}{{\mathcal I}}

\newcommand{\NN}{{\mathcal N}}
\newcommand{\OO}{{\mathcal O}}

\newcommand{\TT}{{\mathcal T}}

\newcommand{\N}{{\mathbb N}}

\newcommand{\R}{{\mathbb R}}

\newcommand{\vv}{{\mathbf v}}
\newcommand{\cc}{{\mathbf c}}

\def\BPhi{{\boldsymbol{\BPhi}}}
\def\B0{{\mathbf{0}}}

\newcommand{\Jac}{\operatorname{Jac}}

\newcommand{\Dom}{\operatorname{Dom}}

%&&&&&&&&&&    Content   &&&&&&&&

\catcode`\@=12

\def\Empty{}
\newcommand\oplabel[1]{
  \def\OpArg{#1} \ifx \OpArg\Empty {} \else
  	\label{#1}
  \fi}
		
%%%%%%%%%%%%%%%%%%%%%%%%%%%%%%%%%%%%%%%%%%%%%%%%%%%%%%%%%%%%%%%%%%%%%
% Insert a postscript figure using psfig.
% Usage:	\realfig{label}{filename}{caption}
%
% uses psfig macros: must have \input{psfig} in the preamble to use
% it. 
%%%%%%%%%%%%%%%%%%%%%%%%%%%%%%%%%%%%%%%%%%%%%%%%%%%%%%%%%%%%%%%%%%%%%

%&&&&&&&&&&&&       List of figures              &&&&&&&&&
%
%&&&&&&&&&&&&&&&&&&&&&&&&&&&&&&&&&&&&&&&&&&&&&&&&&&&&&

\newcommand{\comm}[1]{}
\newcommand{\comment}[1]{}

\usepackage[titletoc,title,header]{appendix}

\allowdisplaybreaks
\newcommand\numberthis{\addtocounter{equation}{1}\tag{\theequation}}

\begin{document}

\bigskip\bigskip

\title[H\'enon renormalization]{Invariant space under H\'enon renormalization : Intrinsic geometry of Cantor attractor}

\address {Hong-ik University }
%\address {Young Woo Nam \\ \quad e-mail : ellipse7\,@\ daum.net}
\date{August 13, 2014}
\author{Young Woo Nam}
\thanks{%College of Science and Technology, Hongik University at Sejong, Korea. \newline 
Email $ \colon $ namyoungwoo\,@\,hongik.ac.kr}

%\dedicatory{ooo}

\begin{abstract}
Three dimensional H\'non-like map %$ F $ 
$$ F(x,y,z) = (f(x) - \eps(x,y,z),\ x,\ \de(x,y,z)) $$
is defined on the cubic box $ B $. An invariant space under renormalization would appear only in higher dimension. Consider renormalizable maps each of which satisfies the condition
$$ \di_y \de \circ F(x,y,z) + \di_z \de \circ F(x,y,z) \cdot \di_x \de(x,y,z) \equiv 0 $$
for $ (x,y,z) \in B $. Denote the set of maps satisfying the above condition be $ \NN $. Then the set $ \NN \cap \II(\bar \eps) $ is invariant under the renormalization operator where $ \II(\bar \eps) $ is the set of infinitely renormalizable maps. H\'enon like diffeomorphism in $ \NN \cap \II(\bar \eps) $ has universal numbers, $ b_2 \asymp | \di_z \de | $ and $ b_1 = b_F /b_2 $ where $ b_F $ is the average Jacobian of $ F $. The Cantor attractor of $ F \in \NN \cap \II(\bar \eps) $, $ \OO_F $ has {\em unbounded geometry} almost everywhere in the parameter space of $ b_1 $. If two maps in $ \NN $ has different universal numbers $ b_1 $ and $ \widetilde b_1 $, then the homeomorphism between two Cantor attractor is at most H\"older continuous, which is called {\em non rigidity}.
\end{abstract}

\maketitle

%VT
\thispagestyle{empty}
%\input{imsmark}
%\SBIMSMark{2005/07}{August 2005}{}

\setcounter{tocdepth}{1}
\tableofcontents

%\newpage
\renewcommand{\labelenumi} {\rm {(}\arabic{enumi}{)}}

\renewcommand{\CancelColor}{ \color{blue} }

\section{Introduction}
H\'enon renormalziation with universal limit was introduced in \cite{CLM}. H\'enon-like map is 
$$ F(x,y) = (f(x) - \eps(x,y),\ x) $$
two dimensional. The interesting maps in \cite{CLM} are H\'enon-like with strong hyperbolic fixed points and its period doubling renormalization. In higher dimension, we are interested in the case that the invariant set has the only one repulsive or neutral direction and other directions is strongly contracting. H\'enon renormalization was extended to three dimensional H\'enon-like maps from a cubic box, $ B $ to itself  
$$ F(x,y,z) = (f(x) - \eps(x,y,z),\ x,\ \de(x,y,z)) $$ 
in \cite{Nam1}. 
%In other words, the homoclinic tangency has its bifurcation as the one dimensional unstable manifold and codimension one stable manifold with strong dissipative map. After smooth coordinate change, we may assume that the image of the plane $ \{\,x = \const. \} $ is the plane $ \{\,y =\const.\} $ under the first return map of the bifurcation. Then three dimensional H\'enon-like map is suggested as the first return map of the homoclinic bifurcation locally
%
%where $ f $ is a unimodal map and $ \eps $ and $ \de $ are maps from $ \R^3 $ to $ \R $. If the map $ \eps $ is independent of the variable $ z $, then H\'enon-like map $ F $ is called the toy model map. If the homoclinic tangency has sufficiently strong contraction direction, then we may assume that $ \| \:\! \eps\| $ and $ \|\;\! \de \| = O(\bar \eps) $ with small enough positive number $ \overline\eps $. In the similar way, we can define the H\'enon-like maps in arbitrary finite dimension if the unstable manifold at the homoclinic point is one dimensional. For instance, as a perturbation of the unimodal Misiurewicz map, H\'enon-like map in the two and higher dimensional was introduced in \cite{WY1, WY2} and statistical properties of maps which satisfies some particular geometric properties.
%\ssk \\
% 
%Two dimensional infinitely renormalizable H\'enon-like map has the minimal Cantor attractor. It has non rigidity and unbounded geometry. 
%\ssk \\
In three dimension, the geometric properties of Cantor attractor were studied for the map in invariant spaces under renormalization operator rather than in the set of all infinitely renormalizable maps. The first invariant space is the set of toy model maps. In \cite{Nam1}, H\'enon-like maps in this set has embedded invariant surfaces with the assumption of strong contraction along $ z- $axis. 
%
%Then three dimensional H\'enon-like map in this invariant space has the $ C^r $ invariant surfaces for arbitrary $ r < \infty $ and there exists $ C^r $ conjugation between the three dimensional H\'enon-like map restricted on the invariant surface and two dimensional $ C^r $ H\'enon-like maps. Then the Cantor attractor of the map in this space has the same properties of two dimensional map. In particular, Cantor attractor of the map has unbounded geometry and non rigidity. 
\ssk \\
However, in this paper we discover another invariant space in which any H\'enon-like map does not  require any invariant surfaces. Instead of this, we see the formula between derivatives of the third coordinate map of $ F $, $ D\de $ and $ D\de_1 $ of $ RF $ in Lemma \ref{appendix - Dde recursive form}. Since renormalized map, $ RF $ is determined by $ F $, the third coordinate map $ \de_1 $ of $ RF $ is so. In particular, $ f $, $ \eps $ and $ \de $ of $ F $ affects $ \de_1 $ in general. However, if $ F $ satisfies the following equation, 
\ssk
\begin{align*} 
\di_y \de \circ (F(x,y,z)) + \di_z \de \circ (F(x,y,z)) \cdot \di_x \de(x,y,z) \equiv 0  \numberthis  \label{eq-equation for NN}
\\[-1.2em]
\end{align*}
then only $ D\de $ affects the next third coordinate map, $ D\de_1 $. Let $ \NN $ be the set of renormalizable H\'enon-like maps each of which satisfies the above equation \eqref{eq-equation for NN}. %Thus we could expect that any three dimensional H\'enon-like map in $ \II(\bar \eps) \cap \NN $ is decomposed dynamically into two dimensional H\'enon-like map and third coordinate one. 
Then the set $ \II(\bar \eps) \cap \NN $ is invariant under renormalization operator (Theorem \ref{Invariance of the space-NN}). Moreover, there exist two universal numbers, say $ b_1 $ and $ b_2 $ and $ b_1b_2 = b_F $ by Proposition \ref{b-2 as the asymptotic of di-z de} and Lemma \ref{Another universal number b_1}. Each of three dimensional H\'enon-like maps in this space has its Cantor attractor which has {\em unbounded geometry} and {\em non-rigidity} (Theorem \ref{unbounded geometry with b-1 a.e.} and Theorem \ref{Non rigidity with b-1}). It is worth to emphasize that these two geometric properties of Cantor attractor only depend on $ b_1 $ which is from the two dimensional H\'enon-like map in three dimension.
%\ssk \\
 
\msk

\bsk
\section{Preliminaries}
Let three dimensional {\em H\'enon-like map} $ F $ be the map as follows
\begin{equation*}
F(x,\,y,\,z) = (f(x) - \eps(x,y,z),\ x,\ \de(x,y,z))
\end{equation*}
where $ f $ is the unimodal map and $ \eps $ and $ \de $ from $ \Dom(F) $ to $ \R $ are maps with small norms, that is, $ \| \:\!\eps\|, \| \:\!\de \| \leq \bar \eps $ for small enough $ \bar \eps > 0 $. Then the image of the plane $ \{ \,x =\const .\} $ under H\'enon-like map is the plane $ \{\, y= \const .\} $. \ssk \\
Let $ f $ be the unimodal map defined on the closed interval $ I $ such that $ f(I) \subset I $ and the critical point and the critical value are in $ I $. $ f $ is {\em renormalizable} with period doubling type if there exists a closed subinterval $ J \neq I $ containing the critical point, $ c_f $ of $ f $ and $ f^2(J) $ is invariant and $ f^2(c_f) \in \di J $. Thus if $ f $ is renormalizable, then we can consider the smallest interval $ J_f $ satisfying the above properties. The conjugation \ssk of the appropriate affine rescaling of $ f^2|_{J_f} $ defines the renormalization of $ f $, $ Rf \colon I \ra I $. If $ f $ is infinitely renormalizable, there exists the renormalization fixed point $ f_* $. The {\em scaling factor} of $ f_* $ is
\begin{equation*}
\si = \frac{| \;\!J_{f_*}|}{| \:\! I |}
\end{equation*}
and $ \la = 1/ {\si} = 2.6 \ldots $\;. For the properties of renormalizable unimodal maps on the bounded interval, for example, see \cite{BB}.
%\ssk \\
%Let us assume that three dimensional H\'enon-like map has two fixed points, $ \beta_0 $ and $ \beta_1 $ in the cubic domain $ B $. $ \beta_0 $ has positive eigenvalues and $ \beta_1 $ has both positive and negative eigenvalues. $ F $ is {\em renormalizable}  if $ W^u(\beta_0) \cap W^s(\beta_1) $ is the orbit of a single point. This property implies existence of the invariant region under $ F^2 $. In particular, regions whose boundary contains the local stable manifold of $ \beta_1 $, $ W^s_{loc}(\beta_1) $ and the components of $ W^s(\beta_1) $ which is immediately left or right with full height are invariant under $ F^2 $. 
However, the image of $ \{ \,x= \const .\} $ under $ F^2 $ is the surface
\begin{equation*}
f(x) - \eps(x,y,z) = \const .
\end{equation*}
which is not a plane except that $ \eps \equiv 0 $. Thus analytic definition of renormalization of $ F $ requires non-linear coordinate change map. The {\em horizontal-like} diffeomorphism $ H $ of $ F $ is defined as follows
\begin{equation*}
H(x,y,z) = (f(x) - \eps(x,y,z),\ y,\ z - \de(y,f^{-1}(y),0))
\end{equation*}
and it preserves the plane $ \{\,y=\const .\} $. Then {\em renormalization} of $ F $ is defined
\begin{equation*}
RF = \La \circ H \circ F^2 \circ H^{-1} \circ \La^{-1}
\end{equation*}
where $ \La(x,y,z) = (sx,\,sy,\,sz) $ for the appropriate constant $ s < -1 $. If $ F $ is $ n- $times renormalizable, then $ R^kF $ is defined as the renormalization of $ R^{k-1}F $ for $ 2 \leq k \leq n $. Denote $ \Dom(F) $ to be the cubic box region, $ B $. If the set $ B $ is emphasized with the relation of a certain map $ R^kF $, for example, then denote this region to be $ B(R^kF) $. 
\ssk \\
Let $ \II(\bar \eps) $ be the space of the infinitely renormalizable H\'enon-like maps with small norm, $ \| \:\!\eps \|, \| \:\!\de \| = O(\bar \eps) $. If $ \bar \eps >0 $ is sufficiently small, then renormalization operator $ R $ has the unique fixed point in $ \II(\bar \eps) $. The fixed point $ F_* $ is the degenerate map which is of following form
\begin{equation*}
F(x,y,z) = (f_*(x),\ x,\ 0) .
\end{equation*}
$ F_* $ is the hyperbolic fixed point of the renormalization operator and it has codimension one stable manifold at $ F_* $. 
\ssk \\
$ F_k $ denotes $ R^kF $ for each $ k $. Let the coordinate change map which conjugates $ F_k^2|_{\La_k^{-1}(B)} $ and $ RF_k $ is denoted by 
\begin{equation*}
\begin{aligned}
\psi^{k+1}_v \equiv H_k^{-1} \circ \La_k^{-1} \colon \Dom(RF_k) \ra \La_k^{-1}(B)
\end{aligned} \msk
\end{equation*}
\nin where $ H_k $ is the horizontal-like diffeomorphism and $ \La_k $ is dilation with the constant $ s_k < -1 $. Let $ \psi^{k+1}_c = F_k \circ \psi^{k+1}_v $. For $ k < n $ and express the compositions of $ \psi^{k+1}_v $ and $ \psi^{k+1}_c $ as follows
\msk
\begin{equation*}
\begin{aligned}
\Psi^n_{k,\,{\bf v}} &= \psi^{k+1}_v \circ \psi^{k+2}_v \circ \cdots \circ \psi^n_v \\[0.2em]
\Psi^n_{k,\,{\bf c}} &= \psi^{k+1}_c \circ \psi^{k+2}_c \circ \cdots \circ \psi^n_c .
\end{aligned} \msk
\end{equation*}
%Let a letter $ w $ express $ v $ or $ c $. 
Moreover, the word of length $ n $ in the Cartesian product, $ \{ v, c \}^n $ be $ {\bf w}_n $ or simply $ {\bf w} $. %Thus $ {\bf w}_n $ can be the concatenation of the $ n $ letters of $ v $ and $ c $. For example, $ {\bf w}_n = vvcvvcccvc \ldots vcvv $ with length $ n $. 
The map $ \Psi^n_{k,\,{\bf w}} $ is from $ B(R^nF) $ to $ B(R^kF) $ where the word $ {\bf w} $ of the length $ n-k $. Denote the region $ \Psi^n_{k,\,{\bf w}}(B(R^nF)) $ by $ B^n_{{\bf w}} $. % The coefficient of dilation $ \La_n $ converges to $ -\la $ as $ n \ra \infty $ exponentially fast. Then 
We see that $ \diam (B^n_{{\bf v}}) \leq C\si^n $ where $ {\bf v} = v^n $ for some $ C>0 $ in \cite{CLM} or \cite{Nam1}. If $ F $ is a infinitely renormalizable H\'enon-like map, then it has invariant Cantor set
\begin{equation*}
\OO_F = \bigcap_{n=1}^{\infty} \bigcup_{{\bf w} \in\,W^n} B^n_{\bf w} 
\end{equation*}
and $ F $ acts on $ \OO_F $ as a dyadic adding machine. The counterpart of the critical value of the unimodal renormalizable map is called the {\em tip}
\begin{equation*}
\{\tau_F \} \equiv \bigcap_{n\geq \;\! 0} B^n_{v^n} .
\end{equation*}
The unique invariant probability measure on $ \OO_F $ is denoted by $ \mu $. The average Jacobian of $ F $, $ b_F $ is defined as
\begin{equation*}
b_F = \exp \int_{\OO_F} \log \,\Jac F \;d\mu .
\end{equation*}
Then there exists the asymptotic expression of $ \Jac R^nF $ for the map $ F \in \II(\bar \eps) $ with $ b_F $ and the universal function.
\ssk
\begin{thm}[\cite{Nam2}]
For the map $ F \in \II(\bar \eps) $ with small enough positive number $ \bar \eps $, the Jacobian determinant of renormalization of $ F $ as follows
\begin{equation}
\Jac R^nF = b_F^{2^n}a(x)\, (1+O(\rho^n))
\end{equation}
where $ b_F $ is the average Jacobian of $ F $ and $ a(x) $ is the universal positive function for $ n \in \N $ and for some $ \rho \in (0,1) $.
\end{thm}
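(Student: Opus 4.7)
The plan is to mirror the two-dimensional argument of [CLM], with extra bookkeeping for the third coordinate. The starting point is the conjugacy
\[
R^nF \;=\; \bigl(\Psi^n_{0,{\bf v}}\bigr)^{-1} \circ F^{2^n} \circ \Psi^n_{0,{\bf v}},
\]
obtained by iterating the definition of $R$ (here ${\bf v}=v^n$). Differentiating and taking determinants,
\[
\Jac R^nF(x) \;=\; \frac{\Jac \Psi^n_{0,{\bf v}}(x)}{\Jac \Psi^n_{0,{\bf v}}(R^nF(x))} \;\cdot\; \prod_{k=0}^{2^n-1} \Jac F\bigl(F^k(\Psi^n_{0,{\bf v}}(x))\bigr).
\]
The task splits into showing that the product contributes $b_F^{2^n}(1+O(\rho^n))$ and the ratio contributes a universal factor $a(x)(1+O(\rho^n))$.

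For the product, the key observation is that the $2^n$ points $\{F^k(\Psi^n_{0,{\bf v}}(x))\}_{k=0}^{2^n-1}$ are exactly the tops of the $n$-th generation pieces $B^n_{{\bf w}}$ as ${\bf w}$ ranges over length-$n$ words, and as $n\to\infty$ they equidistribute to the unique invariant measure $\mu$ on $\OO_F$. Since $\log\Jac F$ is H\"older and $\log b_F=\int\log\Jac F\,d\mu$ by definition, the exponential mixing of the dyadic adding machine yields
\[
\sum_{k=0}^{2^n-1}\log\Jac F\bigl(F^k(\Psi^n_{0,{\bf v}}(x))\bigr) \;=\; 2^n\log b_F + O(\rho^n)
\]
uniformly in $x$, and exponentiation produces the $b_F^{2^n}(1+O(\rho^n))$ factor.

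For the coordinate-change ratio, one uses hyperbolicity of the renormalization fixed point $F_*$ in $\II(\bar\eps)$: $\|R^nF-F_*\|=O(\rho^n)$ in the appropriate norm, so each branch $\psi^{k+1}_v$ converges geometrically to its universal counterpart attached to $F_*$. Consequently $\Jac \Psi^n_{0,{\bf v}}$ converges uniformly on compact sets of $B(R^nF)$, and the ratio $\Jac \Psi^n_{0,{\bf v}}(x)/\Jac \Psi^n_{0,{\bf v}}(R^nF(x))$ stabilizes at a universal positive function $a(x)$ with an exponentially small error term.

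The main obstacle is three-dimensional in nature. Unlike the H\'enon case, $\Jac F=\partial_y\eps\cdot\partial_z\de-\partial_z\eps\cdot\partial_y\de$ carries genuine $D\de$ contributions, and one must verify that these do not spoil the scaling-by-$b_F^{2^n}$ law. This amounts to a careful comparison of the H\"older cocycle $\log\Jac F$ along the orbit with its average against $\mu$; here the small-norm hypothesis $\|\eps\|,\|\de\|\le\bar\eps$ together with the $z$-contraction enforced by infinite renormalizability is exactly what provides the required regularity and the exponential convergence rate $\rho<1$.
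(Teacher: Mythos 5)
Your approach is essentially the [CLM] strategy transplanted to three dimensions, and the overall architecture is sound: write $R^nF=(\Psi^n_{0,{\bf v}})^{-1}\circ F^{2^n}\circ\Psi^n_{0,{\bf v}}$, apply the chain rule, recognize the $2^n$ orbit points $F^k(\Psi^n_{0,{\bf v}}(x))=\Psi^n_{{\bf w}_k}(x)$ as a transversal of the level-$n$ pieces, and invoke the H\"older modulus of $\log\Jac F$ together with $\diam B^n_{\bf w}=O(\sigma^n)$ to get $\prod_k\Jac F = b_F^{2^n}(1+O(\rho^n))$. That part is correct. The paper's Appendix A records the complementary bookkeeping: a one-step recursion
\begin{equation*}
\Jac R^nF(w)=(f^{-1}_{n-1,\eps})'(\sigma_{n-1}x)\cdot f'_{n-1}(f_{n-1,\eps}(\sigma_{n-1}x))\cdot\Jac F_{n-1}(\psi^n_v(w))\cdot\Jac F_{n-1}(\psi^n_c(w)),
\end{equation*}
which, iterated, produces the same $\prod_{{\bf w}\in W^n}\Jac F\circ\Psi^n_{\bf w}$ but with the ancillary factor packaged as a product of $(f^{-1})'\cdot f'$ terms rather than your ratio $\Jac\Psi^n_{0,{\bf v}}(w)/\Jac\Psi^n_{0,{\bf v}}(R^nF(w))$. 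The two decompositions are algebraically equivalent; the recursion is presumably what is used in [Nam2].

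Where your write-up has a genuine gap is the ratio. You assert that ``$\Jac\Psi^n_{0,{\bf v}}$ converges uniformly on compact sets'' — it does not: $\Psi^n_{0,{\bf v}}$ maps $B$ into $B^n_{\bf v}$, a piece of diameter $O(\sigma^n)$, so $\Jac\Psi^n_{0,{\bf v}}\to 0$ exponentially. Only the \emph{ratio} stabilizes, and showing this is not automatic. One needs the telescoping factorization
\begin{equation*}
\frac{\Jac\Psi^n_{0,{\bf v}}(w)}{\Jac\Psi^n_{0,{\bf v}}(R^nF(w))}=\prod_{k=0}^{n-1}\frac{\Jac\psi^{k+1}_v(\Psi^n_{k+1,{\bf v}}(w))}{\Jac\psi^{k+1}_v(\Psi^n_{k+1,{\bf v}}(R^nF(w)))},
\end{equation*}
and then two pieces of control: for $k$ small relative to $n$ both arguments are within $O(\sigma^{n-k})$ of the tip $\tau_{k+1}$, so each factor is $1+O(\sigma^{n-k})$; for $k$ near $n$ one uses $\|R^kF-F_*\|=O(\rho^k)$ to replace $\psi^{k+1}_v$ by its fixed-point counterpart, and the resulting infinite product defines the universal limit. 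Finally, the theorem asserts $a$ depends on the first coordinate $x$ only; that collapse comes from the degeneracy of $F_*$ (its $\eps_*$ and $\de_*$ vanish, so $DH_*$ and hence $D\psi_*$ are effectively one-dimensional in the relevant directions) and should be stated explicitly rather than subsumed in ``universality.'' Your closing paragraph identifies the $D\de$ contribution to $\Jac F$ as the main 3D obstacle, but in fact the Birkhoff-averaging step is dimension-agnostic once $\Jac F$ is bounded away from zero; the genuine 3D work is precisely the ratio estimate and the $x$-dependence of $a$ described above.
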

\ssk 
\nin Denote the tip, $ \tau_{F_n} $ for $ n \in \N $ to be $ \tau_n $. The definitions of tip and $ \Psi^n_{k,\,{\bf v}} $ imply that $ \Psi^n_{k,\,{\bf v}}(\tau_n) = \tau_k $ for $ k < n $. Then after composing appropriate translations, tips on each level moves to the origin as the fixed point
\begin{equation*}
\Psi^n_k(w) = \Psi^n_{k,\,{\bf v}}(w + \tau_n) - \tau_k
\end{equation*}
for $ k < n $. The map $ \Psi^n_k $ is separated non linear part and dilation part after reshuffling
\msk
\begin{equation*}
\begin{aligned}
\Psi^n_k(w) = 
\begin{pmatrix}
1 & t_{n,\,k} & u_{n,\,k} \\[0.2em]
& 1 & \\
& d_{n,\,k} & 1
\end{pmatrix}
\begin{pmatrix}
\alpha_{n,\,k} & & \\
& \si_{n,\,k} & \\
& & \si_{n,\,k}
\end{pmatrix}
\begin{pmatrix}
x + S^n_k(w) \\
y \\[0.2em]
z + R^n_k(y)     
\end{pmatrix} 
\end{aligned} \msk
\end{equation*} 
where $ \alpha_{n,\,k} = \si^{2(n-k)}(1 + O(\rho^k)) $ and $ \si_{n,\,k} = (-\si)^{n-k}(1 + O(\rho^k)) $. In this paper, we confuse the map $ \Psi^n_{k,\,{\bf v}} $ with $ \Psi^n_k $ to obtain the simpler expression of each coordinate map of $ \Psi^n_{k,\,{\bf v}} $. For example, the third coordinate expression of $ \Psi^n_k $
\begin{equation*}
\si_{n,\,k}\,d_{n,\,k}\,y + \si_{n,\,k}\,\big[\,z + R^n_k(y)\,\big] 
\end{equation*}
means that $ \si_{n,\,k}\,d_{n,\,k}\,(y + \tau_n^y) + \si_{n,\,k}\,\big[\,z + \tau_n^z + R^n_k(y + \tau_n^y)\,\big] - \tau_k $ where $ \tau_n = (\tau_n^x,\; \tau_n^y,\; \tau_n^z) $. By the same way, the first coordinate map
\begin{equation*}
\alpha_{n,\,k}\,\big[\,x + S^n_k(w)\,\big] + \si_{n,\,k}\,t_{n,\,k}\,y + \si_{n,\,k}\,u_{n,\,k}\,\big[\,z + R^n_k(y) \,\big]
\end{equation*}
means that
\msk
\begin{equation*}
%\begin{aligned}
\alpha_{n,\,k}\,\big[\,(x + \tau_n^x) + S^n_k(w + \tau_n)\,\big] + \si_{n,\,k}\,t_{n,\,k}\,(y + \tau_n^y) + \si_{n,\,k}\,u_{n,\,k}\,\big[\,(z + \tau_n^z) + R^n_k(y+ \tau_n^y) \,\big] - \tau_k
%\end{aligned}
\end{equation*} \ssk
for $ k < n $.
 Recall the definitions %for infinitely renormalizable map $ F $ 
for later use
\ssk
\begin{equation*}
\begin{aligned}
\La_n^{-1}(w) =  \si_n \cdot w , \quad  \psi^{n+1}_v(w) =  H^{-1}_n (\si_n w) , \quad   
\psi^{n+1}_c(w) =  F_n \circ H^{-1}_n (\si_n w)  \\[0.3em]
%\end{equation*}
%\begin{equation*}
\psi^{n+1}_v (B(R^{n+1}F)) = B^{n+1}_v ,  \qquad \psi^{n+1}_c (B(R^{n+1}F)) = B^{n+1}_c
\end{aligned} \ssk
\end{equation*}
for each $ n \in \N $.

\bsk 

%\newpage

\section{An invariant space under renormalization}

\subsection{A space of renormalizable maps from recursive formulas of $ D\de $}

Let $ F $ be a renormalizable three dimensional H\'enon-like map. 
\nin Denote partial derivatives of the composition as follows
$$ \di_x \{ P \circ Q(w) \} \equiv \di_x P(Q(w)) \qquad
\di_x P \ \text{at} \  Q(w) \ \;  \text{is} \ \; \di_x P \circ Q(w) .
 $$
\nin The similar notation is defined for partial derivatives over any other variables also. 
%\ssk \\
Recall that $ RF $ is the renormalized map of $ F $ and its third coordinate map of $ RF $ is $ \de_1 = \pi_z \circ RF $. Thus by the definition of renormalization, we obtain the relation between $ \de $ and $ \de_1 $ as follows 
\begin{equation*}
\de_1(w) = \si_0 \cdot \big[\,\de \circ F \circ H^{-1}(\;\!\si_0 w) - \de (\si_0 x,\, f^{-1}(\si_0 x),\,0 \:\!) \,\big] .
\end{equation*}
Then by Lemma \ref{appendix - Dde recursive form}, we obtain
\msk
\begin{equation*}
\begin{aligned}
\di_x \de_1(w) &= \ \boxed{\big[\,\di_y \de \circ \psi^1_c(w) + \di_z \de \circ \psi^1_c(w) \cdot \di_x \de \circ \psi^1_v(w) \,\big]} \cdot \di_x \phi^{-1}(\si_0 w) \\
& \qquad + \di_x \de \circ \psi^1_c(w) - \frac{d}{dx}\;\de(\si_0 x,\, f^{-1}(\si_0 x),\, 0) \\[0.6em]
\di_y \de_1(w) &= \ \boxed{\big[\,\di_y \de \circ \psi^1_c(w) + \di_z \de \circ \psi^1_c(w) \cdot \di_x \de \circ \psi^1_v(w) \,\big]} \cdot \di_y \phi^{-1}(\si_0 w) \\
& \qquad + \di_z \de \circ \psi^1_c(w) \cdot \Big[\, \di_y \de \circ \psi^1_v(w) + \di_z \de \circ \psi^1_v(w) \cdot \frac{d}{dy}\;\de(\si_0 y,\, f^{-1}(\si_0 y),\, 0)\,\Big] \\[0.6em]
\di_z \de_1(w) &= \ \boxed{\big[\,\di_y \de \circ \psi^1_c(w) + \di_z \de \circ \psi^1_c(w) \cdot \di_x \de \circ \psi^1_v(w) \,\big]} \cdot \di_z \phi^{-1}(\si_0 w) \\[0.2em]
& \qquad + \di_z \de \circ \psi^1_c(w) \cdot \di_z \de \circ \psi^1_v(w) 
\end{aligned} \msk
\end{equation*}
Then we can consider the maps such that above boxed expression is identically zero.
\ssk

\begin{defn}
\nin %In the space of the infinitely renormalizable maps, 
Let $ \NN $ be the set of renormalizable three dimensional H\'enon-like maps each of which satisfies the following equation
\msk
\begin{equation*} \label{the Delta class}
\begin{aligned}
\di_y \de \circ (F(w)) + \di_z \de \circ (F(w)) \cdot \di_x \de (w) = 0
% \di_y \de \circ (F \circ H^{-1}(\si_0 w)) &+ \di_z \de \circ (F \circ H^{-1}(\si_0 w)) \cdot \di_x \de \circ H^{-1}(\si_0 w) \equiv 0 \\
% \di_y \de \circ (F^2 \circ H^{-1}(\si_0 w)) &+ \di_z \de \circ (F^2 \circ H^{-1}(\si_0 w)) \cdot \di_x \de \circ (F \circ H^{-1}(\si_0 w)) \equiv 0
\end{aligned} \ssk
\end{equation*}
for all $ w \in \psi^1_v(B) \cup \psi^1_c(B) $. %\footnote{The domain of each renormalized map, $ R^nF $ is denoted to be $ B(R^nF) $. However, all of $ B(R^nF) $ are the same sized cubic with the center origin and each sides are parallel to the each axes in the rectangular coordinate. Then we condensed the notation $ B(R^nF) $ to $ B $. }.
\end{defn}

\msk
\begin{example}
H\'enon-like maps in the above class $ \NN $ is non empty and non trivial. For instance, suppose that the third coordinate map of the given H\'enon-like map $ F $ is 
$$ \de(x,y,z) = \eta (Cy -z) + Cx $$
where $ \max \{ \| \eta \|_{C^3},\, | \:\!C| \,\} = O(\bar \eps) $ for $ C \in \R $. Then $ F $ is in $ \NN $. The map $ \eta $ can be arbitrary with small norm.
\end{example}
\ssk

\nin In the rest of this paper, we use the notation $ q(y) $ and $ q_k(y) $ as follows
\begin{equation} \label{definition of q(y)}
\begin{aligned}
q(y)  = \ \frac{d}{dy}\,\de(y, f^{-1}(y),0), \quad q_k(y)  = \ \frac{d}{dy}\,\de_k(y, f_k^{-1}(y),0) 
\end{aligned}
\end{equation}
for each $ k \in \N $. %Similarly, we can define $ q(x) $ or $ q_k(x) $. 
Moreover, the value of $ q_k $ at $ \si_k y $ is expressed as $ q_k \circ (\si_k y) $.

\msk

\subsection{Invariance of the space $ \NN $ under renormalization}

\msk

\begin{lem} \label{recursive formula of de in N}
Suppose that H\'enon-like map $ F_{k-1} $ is renormalizable. Denote $ RF_{k-1} $ by $ F_{k} $ and $ \pi_z \circ F_j(x,y,z) $ by $ \de_j(x,y,z) $ for $ j = k-1,\, k $. Then \ssk
\begin{equation*}
\begin{aligned}
\di_x \de_k(w) &= \di_x \de_{k-1} \circ \psi^k_c(w) - q_{k-1} \circ (\si_{k-1}\cdot x) \\[0.2em]
\di_y \de_k(w) &= \di_z \de_{k-1} \circ \psi^k_c(w) \cdot \big[\, \di_y \de_{k-1} \circ \psi^k_v(w) + \di_z \de_{k-1} \circ \psi^k_v(w) \cdot q_{k-1}(\si_{k-1} \cdot y)\,\big] \\[0.2em] 
\di_z \de_k(w) &= \di_z \de_{k-1} \circ \psi^k_c(w) \cdot \di_z \de_{k-1} \circ \psi^k_v(w) .
\end{aligned} \ssk
\end{equation*}
\end{lem}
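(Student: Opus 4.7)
The plan is to reduce this lemma directly to the three recursive formulas for $D\delta_1$ in terms of $D\delta$ displayed just above the definition of $\NN$ (whose derivation is the content of Lemma \ref{appendix - Dde recursive form}), and then to kill the common \emph{boxed} summand in those formulas by invoking the defining equation of $\NN$ for $F_{k-1}$. The hypothesis $F_{k-1}\in\NN$ is implicit here from the subsection heading.

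I would first re-index the identities of Lemma \ref{appendix - Dde recursive form} by shifting the renormalization level one step up: formally replace $(F,RF,\sigma_0,\psi^1_v,\psi^1_c,\delta,\delta_1,q)$ by $(F_{k-1},F_k,\sigma_{k-1},\psi^k_v,\psi^k_c,\delta_{k-1},\delta_k,q_{k-1})$. The resulting three identities for $\partial_x\delta_k$, $\partial_y\delta_k$, $\partial_z\delta_k$ hold for \emph{every} renormalizable H\'enon-like map at level $k-1$, and each contains the common boxed factor
\[
B(w):=\partial_y\delta_{k-1}\circ\psi^k_c(w)+\partial_z\delta_{k-1}\circ\psi^k_c(w)\cdot\partial_x\delta_{k-1}\circ\psi^k_v(w).
\]
Using the factorization $\psi^k_c=F_{k-1}\circ\psi^k_v$ recalled in the Preliminaries, $B(w)$ is precisely the $\NN$-identity for $F_{k-1}$ evaluated at the point $u=\psi^k_v(w)$; since $F_{k-1}\in\NN$ and $u$ lies in the image set $\psi^k_v(B(F_k))\cup\psi^k_c(B(F_k))$ on which the equation is imposed, $B\equiv 0$.

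Finally I would substitute $B\equiv 0$ into each re-indexed formula. Every summand proportional to $B(w)$ drops out, and invoking the definition \eqref{definition of q(y)} of $q_{k-1}$ converts the surviving total-derivative terms $\tfrac{d}{dx}\delta_{k-1}(\sigma_{k-1}x,f_{k-1}^{-1}(\sigma_{k-1}x),0)$ and $\tfrac{d}{dy}\delta_{k-1}(\sigma_{k-1}y,f_{k-1}^{-1}(\sigma_{k-1}y),0)$ into $q_{k-1}(\sigma_{k-1}x)$ and $q_{k-1}(\sigma_{k-1}y)$ respectively; the three remaining expressions read verbatim as the claimed formulas. The argument is essentially mechanical once Lemma \ref{appendix - Dde recursive form} is in hand; the only bookkeeping obstacle is to confirm that $\psi^k_v(B(F_k))$ lies in the set on which the $\NN$-equation is assumed to hold for $F_{k-1}$, which follows directly from the inductive construction of the scaling maps $\psi^k_\bullet$ and from $\psi^k_c=F_{k-1}\circ\psi^k_v$.
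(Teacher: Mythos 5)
Your proposal is correct and follows essentially the same route as the paper: the paper's own proof is simply "See Lemma \ref{appendix - Dde recursive form} and use the induction," which amounts exactly to what you spell out — re-index the appendix formulas to level $k-1\to k$, recognize the common boxed factor as the $\NN$-identity for $F_{k-1}$ evaluated at $\psi^k_v(w)$ (using $\psi^k_c=F_{k-1}\circ\psi^k_v$), kill it, and read off $q_{k-1}$ via \eqref{definition of q(y)}. Your observation that the $\NN$-membership of $F_{k-1}$ is an implicit hypothesis, and your check that $\psi^k_v(B(F_k))$ lies in the domain where the $\NN$-equation is imposed, are both accurate and fill in details the paper leaves unstated.
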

\begin{proof}
See Lemma \ref{appendix - Dde recursive form} and use the induction. 
\end{proof}

\msk
\begin{lem} \label{lem-relation of F-k and F-k-1 with psi}
Let $ F $ be an infinitely renormalizable H\'enon-like map and let $ F_k $ be $ R^kF $ for each $ k \in \N $. Then
\begin{equation*}
\begin{aligned}
\psi^k_v \circ F_k &= F_{k-1} \circ \psi^k_c \\
\pi_y \circ \psi^k_v \circ F_k &= \pi_x \circ \psi^k_c
\end{aligned}
\end{equation*}
for each $ k \in \N $.
\end{lem}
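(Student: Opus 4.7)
The plan is to unfold the definitions directly; both identities are essentially bookkeeping once the right expressions are substituted. Recall from the preliminaries that
\begin{equation*}
\psi^k_v = H_{k-1}^{-1} \circ \La_{k-1}^{-1}, \qquad \psi^k_c = F_{k-1} \circ \psi^k_v,
\end{equation*}
and that by definition of the renormalization operator
\begin{equation*}
F_k = R F_{k-1} = \La_{k-1} \circ H_{k-1} \circ F_{k-1}^2 \circ H_{k-1}^{-1} \circ \La_{k-1}^{-1}.
\end{equation*}

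For the first identity, I would pre-compose $F_k$ by $\psi^k_v$: the outer pair $H_{k-1}^{-1} \circ \La_{k-1}^{-1}$ coming from $\psi^k_v$ cancels the inner pair $\La_{k-1} \circ H_{k-1}$ sitting at the head of $F_k$, giving
\begin{equation*}
\psi^k_v \circ F_k = F_{k-1}^2 \circ H_{k-1}^{-1} \circ \La_{k-1}^{-1} = F_{k-1}^2 \circ \psi^k_v = F_{k-1} \circ (F_{k-1} \circ \psi^k_v) = F_{k-1} \circ \psi^k_c,
\end{equation*}
where the last equality uses the definition of $\psi^k_c$.

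For the second identity, I would exploit the structural form of a H\'enon-like map: since $F_{k-1}(x,y,z) = (f_{k-1}(x) - \eps_{k-1}, x, \de_{k-1})$, projection to the $y$-coordinate of the image equals the $x$-coordinate of the input, i.e.\ $\pi_y \circ F_{k-1} = \pi_x$. Applying $\pi_y$ to both sides of the identity just proved then yields
\begin{equation*}
\pi_y \circ \psi^k_v \circ F_k = \pi_y \circ F_{k-1} \circ \psi^k_c = \pi_x \circ \psi^k_c,
\end{equation*}
which is the claim. There is no real obstacle here; the only point to check is that the outer affine and horizontal-like factors in $F_k$ genuinely cancel those in $\psi^k_v$, which they do by construction.
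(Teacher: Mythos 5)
Your proof is correct and follows essentially the same approach as the paper's: unfold the definitions so that the conjugating factors cancel, giving $\psi^k_v \circ F_k = F_{k-1}^2 \circ \psi^k_v = F_{k-1} \circ \psi^k_c$, and then apply $\pi_y$ together with the H\'enon-like structural identity $\pi_y \circ F_{k-1} = \pi_x$. The only cosmetic difference is that you write out $F_k = \La_{k-1} \circ H_{k-1} \circ F_{k-1}^2 \circ H_{k-1}^{-1} \circ \La_{k-1}^{-1}$ explicitly, while the paper uses the equivalent compact form $F_k = (\psi^k_v)^{-1} \circ F_{k-1}^2 \circ \psi^k_v$.
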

\begin{proof}
Recall that $ \psi^k_v = H_{k-1} \circ \La_{k-1} $, $ \psi^k_c = F_{k-1} \circ \psi^k_v $ and $ F_k = (\psi^k_v)^{-1} \circ F_{k-1}^2 \circ \psi^k_v $. Then
\begin{equation} \label{eq-relation between psi-c,v}
\begin{aligned}
 \psi^k_v \circ F_k &= \psi^k_v \circ (\psi^k_v)^{-1} \circ F_{k-1}^2 \circ \psi^k_v \\[0.2em]
&= F_{k-1}^2 \circ \psi^k_v = F_{k-1} \circ \big[\,F_{k-1} \circ \psi^k_v \,\big] \\[0.2em] 
&= F_{k-1} \circ \psi^k_c
\end{aligned}
\end{equation}
for $ k \in \N $. The special form of H\'enon-like map implies that $ \pi_y (F_{k-1}(w)) = \pi_x(w) $. Hence, the equation \eqref{eq-relation between psi-c,v} implies that $ \pi_y \circ \psi^k_v \circ F_k = \pi_x \circ \psi^k_c $.
\end{proof}
\msk

\nin Let us express the notation of $ \psi^k_w \circ \cdots \circ \psi^n_w $ where $ w = v $ or $ c \in W $ as follows \footnote{By the above definition, $ \Psi^n_{v^n} $ and $ \Psi^n_{c^n} $ can be also expressed as follows $$ \Psi^n_{v^n} = \Psi^n_{0,\, {\bf v}}, \quad \Psi^n_{c^n} = \Psi^n_{0,\, {\bf c}} $$}
\begin{equation*}
\begin{aligned}
\psi^k_v \circ \psi^{k+1}_v \circ \cdots \circ \psi^n_v & = \Psi^n_{k,\, v^{n-k}} \equiv \Psi^n_{k,\, {\bf v}} \\
\psi^k_c \circ \psi^{k+1}_c \circ \cdots \circ \psi^n_c & = \Psi^n_{k,\, c^{n-k}} \equiv \Psi^n_{k,\, {\bf c}} .
\end{aligned} \msk
\end{equation*}
Moreover, let us take the following notations
\begin{equation*}
\begin{aligned}
\Psi^n_{k,\, {\bf v}} \circ \psi^{n+1}_c \equiv \Psi^{n+1}_{k,\, {\bf v}c}, \quad 
\Psi^n_{k,\, {\bf c}} \circ \psi^{n+1}_v \equiv \Psi^{n+1}_{k,\, {\bf c}v} 
\end{aligned}
\end{equation*}
for each $ n \in \N $. The notation $ \Psi^{n+2}_{k,\, {\bf v}cv} $ or $ \Psi^{n+2}_{k,\, {\bf v}c^2} $ and any similar ones are allowed.

\begin{cor} \label{cor-relation between Psi-c and Psi-v}
Let $ F $ be the infinitely renormalizable H\'enon-like map. Then
$$ \Psi^n_{k,\,\vv} \circ F_n = F_k \circ \Psi^n_{k,\,\cc} $$
for $ k < n $.
\end{cor}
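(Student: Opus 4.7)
The plan is to prove Corollary \ref{cor-relation between Psi-c and Psi-v} by a straightforward induction on the word length $n-k$, using the single-step identity $\psi^j_v \circ F_j = F_{j-1} \circ \psi^j_c$ from Lemma \ref{lem-relation of F-k and F-k-1 with psi} to shuttle $F$ through one $\psi_v$ at a time while converting it into $\psi_c$.

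For the base case $n = k+1$, the claim reduces directly to $\psi^{k+1}_v \circ F_{k+1} = F_k \circ \psi^{k+1}_c$, which is precisely Lemma \ref{lem-relation of F-k and F-k-1 with psi} applied at level $k+1$. For the inductive step, assume the statement holds for words of length $n-k$ and consider $\Psi^{n+1}_{k,\vv}\circ F_{n+1}$. By definition,
\begin{equation*}
\Psi^{n+1}_{k,\vv}\circ F_{n+1} \;=\; \Psi^n_{k,\vv} \circ \psi^{n+1}_v \circ F_{n+1}.
\end{equation*}
Applying Lemma \ref{lem-relation of F-k and F-k-1 with psi} at level $n+1$ gives $\psi^{n+1}_v\circ F_{n+1} = F_n\circ \psi^{n+1}_c$, so the right-hand side equals $\Psi^n_{k,\vv}\circ F_n \circ \psi^{n+1}_c$. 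The inductive hypothesis then yields $F_k\circ \Psi^n_{k,\cc}\circ\psi^{n+1}_c = F_k\circ \Psi^{n+1}_{k,\cc}$, which completes the step.

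Since the whole argument is a clean telescoping through a finite composition, there is no real obstacle; the only care needed is to verify that the lemma is applied on the correct domain at each level. This is automatic because $\Psi^n_{k,\vv}$ and $\Psi^n_{k,\cc}$ are defined on $B(R^nF)$ with values in $B(R^kF)$, and $F_n$ maps $B(R^nF)$ to itself (up to restriction to the appropriate renormalization domain), so each composition is well-defined at every stage of the induction.
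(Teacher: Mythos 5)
Your proof is correct and is essentially the paper's argument: the paper also telescopes the single-step identity $\psi^{j+1}_c = F_j\circ\psi^{j+1}_v$ (equivalently $\psi^{j+1}_v\circ F_{j+1}=F_j\circ\psi^{j+1}_c$) through the composition, just writing it out as an explicit chain of equalities starting from $F_k\circ\Psi^n_{k,\cc}$ rather than as a formal induction peeling off the rightmost factor. The two presentations are interchangeable, and your inductive framing is if anything slightly cleaner.
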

\begin{proof}
Recall the equation $ \psi^{j+1}_c = F_j \circ \psi^{j+1}_v $ for $ k \leq j < n $. Thus 
\msk
\begin{equation*}
\begin{aligned}
%\quad \ \ 
F_k \circ \Psi^n_{k,\,\cc} %\\[0.2em]
& = F_k \circ \psi^{k+1}_c \circ \psi^{k+2}_c \circ \cdots \circ \psi^n_c \\[0.2em]
& = \big[\,F_k \circ (F_k \circ \psi^{k+1}_v)\,\big] \circ (F_{k+1} \circ \psi^{k+2}_v) \circ \cdots \circ (F_{n-1} \cdot \psi^n_v) \\[0.2em]
& = \psi^{k+1}_v \circ \psi^{k+2}_v \circ F_{k+2} \circ (F_{k+2} \circ \psi^{k+3}_v) \circ \cdots \circ (F_{n-1} \circ \psi^n_v) \\
&  \hspace{1in} \vdots \\[0.2em]
& = \psi^{k+1}_v \circ \psi^{k+2}_v \circ \cdots \circ \psi^n_v \circ F_n \\[0.2em]
& = \Psi^n_{k,\,\vv} \circ F_n
\end{aligned}
\end{equation*}
\end{proof}

%\msk 
\begin{thm} \label{Invariance of the space-NN}
Let $ \NN $ be the set of renormalizable H\'enon-like maps defined in Definition \ref{the Delta class}. %The intersection of the space $ \NN $ and the space of \ssk infinitely renormalizable maps, $ \II_B(\bar \eps) $  
The space $ \II(\bar \eps) \cap \NN $ is invariant under renormalization, that is, if $ F \in \II(\bar \eps) \cap \NN $, then $ RF \in \II(\bar \eps) \cap \NN $ .
\end{thm}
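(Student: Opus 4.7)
\emph{Plan.} Since $F \in \II(\bar\eps)$ is infinitely renormalizable, $RF$ is automatically in $\II(\bar\eps)$ by the standard theory of H\'enon renormalization. Hence the substantive content is to show that $F_1 := RF$ satisfies the defining equation of $\NN$, namely
$$\di_y \de_1 \circ F_1(w) + \di_z \de_1 \circ F_1(w) \cdot \di_x \de_1(w) = 0$$
for all $w$ in the appropriate subdomain $\psi^2_v(B(F_2)) \cup \psi^2_c(B(F_2)) \subset B(F_1)$. The strategy is to substitute the three formulas of Lemma~\ref{recursive formula of de in N} (with $k=1$), rewrite every evaluation at $F_1(w)$ using Lemma~\ref{lem-relation of F-k and F-k-1 with psi}, and reduce the whole expression to the $\NN$-condition for $F$ itself, evaluated at the point $u := \psi^1_c(w)$.

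\emph{Carrying it out.} Lemma~\ref{lem-relation of F-k and F-k-1 with psi} gives $\psi^1_v \circ F_1 = F \circ \psi^1_c$, and composing once more with $F$ via $\psi^1_c = F \circ \psi^1_v$ yields $\psi^1_c \circ F_1 = F^2 \circ \psi^1_c$. Writing $u = \psi^1_c(w)$, we therefore have $\psi^1_v(F_1(w)) = F(u)$ and $\psi^1_c(F_1(w)) = F^2(u)$. The H\'enon-like form of $F_1$ also forces $\pi_y(F_1(w)) = \pi_x(w) = x$, so that the $q_0$ factor produced when substituting $F_1(w)$ into the formula for $\di_y \de_1$ has argument $\si_0 x$, exactly matching the $q_0(\si_0 x)$ factor appearing in the expression for $\di_x \de_1(w)$. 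Substituting Lemma~\ref{recursive formula of de in N} and regrouping, the left-hand side becomes
$$\di_z \de(F^2(u)) \cdot \Big[\di_y \de(F(u)) + \di_z \de(F(u)) \, q_0(\si_0 x) + \di_z \de(F(u)) \big(\di_x \de(u) - q_0(\si_0 x)\big)\Big],$$
and the two $q_0(\si_0 x)$ contributions cancel, collapsing the bracket to $\di_y \de(F(u)) + \di_z \de(F(u)) \cdot \di_x \de(u)$.

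\emph{Conclusion and main point.} Because $w \in B(F_1)$, the point $u = \psi^1_c(w)$ lies in $B^1_c \subset \psi^1_v(B) \cup \psi^1_c(B)$, so the hypothesis $F \in \NN$ applied at $u$ forces the remaining bracket to vanish, completing the verification. The only genuinely nontrivial step is the exact cancellation of the two $q_0(\si_0 x)$ terms; this cancellation is, in effect, the very reason the class $\NN$ is defined by precisely that equation, and once one notices that the H\'enon-like structure forces the $y$-coordinate of $F_1(w)$ to equal $x$, the algebra reduces to a single application of $F \in \NN$. No new estimates on universal constants or on the smallness of $\bar\eps$ are needed beyond what is already encoded in $F \in \II(\bar\eps)$.
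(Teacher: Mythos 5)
Your proof is correct and follows essentially the same argument as the paper: both substitute the recursive formulas of Lemma~\ref{recursive formula of de in N}, use $\psi^1_v\circ F_1 = F\circ\psi^1_c$ from Lemma~\ref{lem-relation of F-k and F-k-1 with psi} together with $\pi_y(F_1(w))=\pi_x(w)$ to match the two $q_0(\si_0 x)$ factors, and conclude by the $\NN$-condition for $F$ applied at $u=\psi^1_c(w)\in B^1_c$. The paper phrases this as the inductive step $F_{k-1}\to F_k$, but the content is the same one-step cancellation you carry out.
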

%\ssk
\begin{proof}
Recall that $ \pi_z \circ R^kF $ is $ \de_k $ and $ D\de_k(w) = (\di_x \de(w) \quad \di_y \de_k(w) \quad \di_z \de_k(w)) $ for $ k \in \N $. Suppose that 
\begin{equation*}
\begin{aligned}
\di_y \de_{k-1} \circ (F_{k-1}(w)) + \di_z \de_{k-1} \circ (F_{k-1}(w)) \cdot \di_x \de_{k-1} (w) = 0
\end{aligned} \msk
\end{equation*}
where $ w \in \psi^k_c(B) \cup \psi^k_v(B) $. By induction it is sufficient to show that
$$ \di_y \de_{k} \circ (F_{k}(w)) + \di_z \de_{k} \circ (F_{k}(w)) \cdot \di_x \de_{k} (w) = 0 $$
where $ w \in \psi^{k+1}_c(B) \cup \psi^{k+1}_v(B) $. Observe that $ \si_{k-1}y = \pi_y \circ \psi^k_v(w) $ and $ \si_{k-1} x = \pi_x \circ \psi^k_c(w) $. By Lemma \ref{lem-relation of F-k and F-k-1 with psi}, we have that $ \pi_x \circ \psi^k_c(w) = \pi_y \circ \psi^k_v \circ F_{k}(w) $ and $ F_k \circ \psi^k_c(w) = \psi^k_v \circ F_{k}(w) $. Then by Lemma \ref{recursive formula of de in N},
\msk
\begin{align*}
&\quad \ \ \di_y \de_{k} \circ (F_{k}(w)) + \di_z \de_{k} \circ (F_{k}(w)) \cdot \di_x \de_{k} (w) \\[0.4em]
&= \ \di_z \de_{k-1} \circ \psi^k_c \circ F_k(w) \\
&\qquad \cdot \big[\, \di_y \de_{k-1} \circ \psi^k_v \circ F_k(w) + \di_z \de_{k-1} \circ \psi^k_v \circ F_k(w) \cdot q_{k-1} \circ \pi_y \circ \psi^k_v \circ F_k(y)\,\big] \\[0.2em]
&\quad \ + \di_z \de_{k-1} \circ \psi^k_c \circ F_k(w) \cdot \di_z \de_{k-1} \circ \psi^k_v \circ F_k(w) %\\ &\qquad 
\cdot \big[\,\di_x \de_{k-1} \circ \psi^k_c(w) - q_{k-1} \circ \pi_x \circ \psi^k_c(x) \,\big] \\[0.4em]
&= \ \di_z \de_{k-1} \circ \psi^k_c \circ F_k(w) \cdot \big[\, \di_y \de_{k-1} \circ \psi^k_v \circ F_k(w)\\
&\quad \ + \di_z \de_{k-1} \circ \psi^k_c \circ F_k(w) \cdot \di_z \de_{k-1} \circ \psi^k_v \circ F_k(w) \cdot \di_x \de_{k-1} \circ \psi^k_c(w) \\[0.4em]
&= \ \di_z \de_{k-1} \circ \psi^k_c \circ F_k(w) \\
&\qquad \cdot \big[\, \di_y \de_{k-1} \circ \psi^k_v \circ F_k(w) + \di_z \de_{k-1} \circ \psi^k_v \circ F_k(w) \cdot \di_x \de_{k-1} \circ \psi^k_c(w)\,\big]\\[0.4em]
&= \ 0
\end{align*} %\msk
For any point $ w \in \Dom(R^kF) $, we obtain that $ \psi^k_c(w) \in \psi^k_c(B) \cup \psi^k_v(B) $. Then $ RF_{k-1} \in \NN \cap \II(\bar \eps) $. Hence, the space $ \NN \cap \II(\bar \eps) $ is invariant under renormalization.
\end{proof}

\bsk

\section{Universal numbers with $ \di_z \de $ and $ \di_y \eps $}

\subsection{Critical point and recursive formula of $ \di_x \de_n $}
Let us define the {\em critical point}, $ c_F $ of $ F \in \II(\bar \eps) $ as the inverse image of the tip, $ \tau_F $ under $ F $, that is, $ c_F = F^{-1}(\tau_F) $. Let the tip and the critical point of $ R^kF $ be $ \tau_k $ and $ c_{F_k} $ respectively. Recall the definition of tip
\begin{equation*}
\{\tau_k \} = \bigcap_{n \geq k+1} \Psi^n_{k,\,\vv}(B)
\end{equation*} 
where $ B $ is the domain, $ B(R^nF) $ for each $ n \in \N $. The above intersection is nested and each $ \Psi^n_{k,\,\vv}(B) $ is connected. Then the tip is just the limit of the sequence of $ \Psi^n_{v^n}(B) $ as follows
\begin{align} \label{definition of the tip}
\{\tau_k \} = \bigcap_{n \geq 1} \Psi^n_{k,\,\vv}(B) = \lim_{n\ra \infty} \Psi^n_{k,\,\vv}(B) .
\end{align}
%where $ B $ is the domain $ B(R^nF) $ for each $ n \in \N $. %Any point of the Cantor attractor has its address, say $ {\bf w} $. Moreover, it is the limit point of the nested sequence, $ \lim_{n \ra \infty} \Psi^n_{{\bf w}_n}(B) $ where $ {\bf w}_n $ is the first consecutive $ n $ concatenations of the address $ {\bf w} $.

\begin{lem}
Let $ F $ be the H\'enon-like map in $ \II(\bar \eps) $. Then the critical point of $ F $, $ c_F $ is the following limit
$$ \{ c_{F_k} \} = \lim_{n \ra \infty} \Psi^n_{k,\,\cc}(B) $$
where $ B $ is the domain, $ B(R^nF) $ for $ k < n $.
\end{lem}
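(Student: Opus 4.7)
The plan is to prove the equality as an intersection of a nested family of compact sets, and then show the intersection reduces to the single point $c_{F_k}$ by using Corollary \ref{cor-relation between Psi-c and Psi-v} to transfer the known nesting statement for the tip (equation \eqref{definition of the tip}) to the $\cc$-branch.

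First I would verify the nesting: since $\psi^{n+1}_c(B) = B^{n+1}_c \subset B$, we obtain
\[ \Psi^{n+1}_{k,\,\cc}(B) = \Psi^n_{k,\,\cc}\bigl(\psi^{n+1}_c(B)\bigr) \subset \Psi^n_{k,\,\cc}(B), \]
yielding a decreasing sequence of nonempty compact sets whose intersection is nonempty.

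Next I would apply Corollary \ref{cor-relation between Psi-c and Psi-v}, giving $F_k \circ \Psi^n_{k,\,\cc} = \Psi^n_{k,\,\vv} \circ F_n$. Since $F_n(B) \subset B$, this implies
\[ F_k\bigl(\Psi^n_{k,\,\cc}(B)\bigr) = \Psi^n_{k,\,\vv}(F_n(B)) \subset \Psi^n_{k,\,\vv}(B). \]
For any $p \in \bigcap_n \Psi^n_{k,\,\cc}(B)$, it follows that $F_k(p) \in \bigcap_n \Psi^n_{k,\,\vv}(B) = \{\tau_k\}$ by \eqref{definition of the tip}, so $p \in F_k^{-1}(\tau_k) = \{c_{F_k}\}$, which gives the inclusion $\bigcap_n \Psi^n_{k,\,\cc}(B) \subset \{c_{F_k}\}$. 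For the reverse inclusion, I would use that $F_n(c_{F_n}) = \tau_n$ and $\Psi^n_{k,\,\vv}(\tau_n) = \tau_k$ (stated earlier in the excerpt), so the same corollary yields
\[ F_k\bigl(\Psi^n_{k,\,\cc}(c_{F_n})\bigr) = \Psi^n_{k,\,\vv}(F_n(c_{F_n})) = \Psi^n_{k,\,\vv}(\tau_n) = \tau_k, \]
and hence $\Psi^n_{k,\,\cc}(c_{F_n}) \in F_k^{-1}(\tau_k) = \{c_{F_k}\}$, which shows $c_{F_k} \in \Psi^n_{k,\,\cc}(B)$ for every $n$. Combining both inclusions gives $\bigcap_n \Psi^n_{k,\,\cc}(B) = \{c_{F_k}\}$. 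Since the intersection of this nested sequence of compact sets is a single point, the diameters of $\Psi^n_{k,\,\cc}(B)$ must tend to zero, so the limit in the stated form coincides with the intersection.

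The main obstacle is the tacit assumption that $F_k^{-1}(\tau_k)$ consists of a unique point in $B$, which is the definition of $c_{F_k}$ but must be underwritten by the H\'enon-like structure. Once $x$ is pinned down by the second coordinate of $F_k$, the remaining two scalar equations coming from the first and third coordinates have a unique solution in $B$ by the implicit function theorem, provided $\eps_k$ and $\de_k$ are sufficiently small; this is guaranteed by $F \in \II(\bar\eps)$ together with the hyperbolicity of the renormalization fixed point, which forces the perturbative parts of $R^kF$ to remain of size $O(\bar\eps)$ on every level.
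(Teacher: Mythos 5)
Your proposal is correct and rests on the same key ingredient as the paper's proof: Corollary \ref{cor-relation between Psi-c and Psi-v}, which intertwines the $\cc$-branch and the $\vv$-branch through $F_k$, together with the characterization of the tip as the nested intersection along the $\vv$-branch. The paper gets convergence to a single point by citing the geometric bound $\diam(\Psi^n_{\bf w}(B)) \leq C\si^{n-k}$ up front and then passes to the limit through a chain of set-equalities; you instead establish the nesting $\Psi^{n+1}_{k,\cc}(B) \subset \Psi^n_{k,\cc}(B)$ directly, prove both inclusions of the singleton identity (the forward inclusion via $F_k(p) = \tau_k$ and the reverse via $\Psi^n_{k,\cc}(c_{F_n}) = c_{F_k}$), and then recover the diameter decay a posteriori from compactness. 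Both routes are sound; yours is marginally more self-contained because it does not invoke the exponential diameter bound, while the paper's is shorter because it treats that bound as a black box (it is already stated in the preliminaries and used throughout). Your closing remark about the uniqueness of $F_k^{-1}(\tau_k)$ is a legitimate concern, but it concerns the paper's definition of $c_{F_k}$ rather than your argument; the paper takes this uniqueness for granted in defining the critical point, and your justification via the implicit function theorem and smallness of $\eps_k, \de_k$ is the standard reason it holds.
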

\begin{proof}
Since $ \diam ( \Psi^n_{\bf w}) \leq C \si^{n-k} $ for some $ C>0 $, the limit of $ \Psi^n_{{\bf w}_n}(B) $ as $ n \ra \infty $ is a single point. % and furthermore, it is same as the limit of the point set which is included in $ \Psi^n_{w^n}(B) $ where $ w = v$ or $ c \in W $. 
By Corollary \ref{cor-relation between Psi-c and Psi-v}, the following equation holds 
\begin{align*}
\Psi^n_{k,\,\vv} \circ F_n = F_k \circ \Psi^n_{k,\,\cc}
\end{align*}
for each $ k< n $. Observe that $ \psi^{n+1}_v (B(R^{n+1}F)) \subset B(R^nF) $ and $ \psi^{n+1}_c (B(R^{n+1}F)) \subset B(R^nF) $. Then passing the limit, the following equation holds
\msk
\begin{equation} \label{pre-critical point as the limit}
\begin{aligned}
 F_k \circ \lim_{n \ra \infty} \Psi^n_{k,\,\cc}(B(R^nF)) & = \lim_{n \ra \infty} F_k \circ \Psi^n_{k,\,\cc}(B(R^nF)) \\
 & = \lim_{n \ra \infty} F_k \circ \Psi^{n}_{k,\,\cc} \circ \psi^{n+1}_v (B(R^{n+1}F)) \\
 & = \lim_{n \ra \infty} \Psi^n_{k,\,\vv} \circ \psi^{n+1}_c (B(R^{n+1}F)) \\
 & = \lim_{n\ra \infty} \Psi^n_{k,\,\vv}(B(R^nF)) = \{\tau_k \}
\end{aligned} \msk
\end{equation}
for all $ n \in \N $ because each limit is a single point set. Then the critical point of $ F $, $ \{ c_{F_k} \} $ is $ \lim_{n \ra \infty} \Psi^n_{k,\,\cc}(B) $.
\end{proof}
\msk

\begin{prop} \label{recursive formula of di-x de-n}
Let $ F $ be the H\'enon-like map in $ \NN \cap \II(\bar \eps) $. Then the following equation holds
\begin{equation*}
\begin{aligned}
\di_x \de_n(w) = \di_x \de_k \circ \Psi^n_{k,\,\cc}(w) - \sum_{i=k}^{n-1} \; q_i \circ \big( \pi_x \circ \Psi^n_{i,\,{\bf c}}(w) \big) 
\end{aligned}
\end{equation*} 
where $ \de_n(w) $ is the third coordinate map of $ R^nF $ for each $ n \in \N $. Moreover, passing the limit the following equation holds
\begin{equation*}
\di_x \de (c_{F_k}) = \lim_{n \ra \infty} \; \sum_{i=k}^{n-1} \, q_i \circ \big( \pi_x \circ \Psi^n_{i,\,{\bf c}}(w) \big) = \lim_{n \ra \infty} \; \sum_{i=k}^{n-1} \, q_i (\pi_x(c_{F_i}))
\end{equation*}
where $ c_{F_k} $ is the critical point of $ R^kF $ for $ k<n $.
\end{prop}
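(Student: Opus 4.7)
The plan is to prove the recursion formula by induction on $n-k$ with $k$ held fixed, taking Lemma \ref{recursive formula of de in N} as the single-step relation. The base case $n=k+1$ is immediate: the lemma gives $\di_x \de_{k+1}(w) = \di_x \de_k \circ \psi^{k+1}_c(w) - q_k(\si_k x)$, and since $\Psi^{k+1}_{k,\,\cc} = \psi^{k+1}_c$ this matches the claimed identity with a single summand. For the inductive step I apply the lemma at level $n$ to write
\begin{equation*}
\di_x \de_n(w) = \di_x \de_{n-1} \circ \psi^n_c(w) - q_{n-1}(\si_{n-1}\pi_x w),
\end{equation*}
then feed the inductive hypothesis (applied at the point $\psi^n_c(w)$) into the first term. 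The telescoping identity $\Psi^{n-1}_{j,\,\cc} \circ \psi^n_c = \Psi^n_{j,\,\cc}$ absorbs every nested $\psi^n_c$ and yields the desired formula at level $n$. One small matching step must be checked along the way: the expression $q_j(\si_j\pi_x u)$ coming out of the lemma must agree with $q_j(\pi_x(\Psi^n_{j,\,\cc}(w)))$ under the convention used in the proposition. This reduces to the identity $\pi_x \circ \psi^{j+1}_c(u) = \si_j\,\pi_x(u)$, which is a direct consequence of the H\'enon-like form: the first coordinate of $F_j \circ H_j^{-1}(a,b,c)$ is simply $a$, so exactly one factor of $\si_j$ is introduced per level.

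For the limit statement I rearrange the identity just proved as
\begin{equation*}
\sum_{i=k}^{n-1} q_i\bigl(\pi_x(\Psi^n_{i,\,\cc}(w))\bigr) = \di_x \de_k\bigl(\Psi^n_{k,\,\cc}(w)\bigr) - \di_x \de_n(w),
\end{equation*}
and pass to the limit $n \to \infty$ on the right. The first term converges to $\di_x \de_k(c_{F_k})$ by continuity of $\di_x \de_k$ together with the preceding lemma, which identifies $c_{F_k}$ as $\lim_n \Psi^n_{k,\,\cc}(B)$. The second term tends to $0$: since $R^nF \to F_*$ in the renormalization topology and the third coordinate of $F_*$ is $\de_* \equiv 0$, the partial derivative $\di_x \de_n$ converges uniformly to zero. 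This gives the first equality.

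For the second equality, in which $\pi_x(\Psi^n_{i,\,\cc}(w))$ is replaced term-by-term by $\pi_x(c_{F_i})$, I would use the diameter bound $\diam(\Psi^n_{i,\,\cc}(B)) \leq C\si^{n-i}$ recalled in the Preliminaries, which implies $|\pi_x(\Psi^n_{i,\,\cc}(w)) - \pi_x(c_{F_i})| = O(\si^{n-i})$, together with a uniform Lipschitz bound on each $q_i$ inherited from the $C^r$ bounds available on the orbit $\{F_n\} \subset \II(\bar\eps)$. Summing the resulting per-term errors yields a geometric tail of order $O(\si^{n-k})$, which vanishes as $n \to \infty$. The main obstacle, as I see it, is precisely this last interchange of limit and sum: absolute convergence of $\sum_i q_i(\pi_x(c_{F_i}))$ is not obvious from the recursion for $\di_x \de_k$ alone (which is additive rather than multiplicative in $q_{k-1}$), so the argument relies on the geometric error produced by the diameter estimate rather than on decay of the individual summands. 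Everything else is bookkeeping on the composition identities between $\Psi^n_{k,\,\cc}$ and $\psi^j_c$.
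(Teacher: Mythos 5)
Your inductive derivation of the recursion is correct and matches the paper's argument; the telescoping $\Psi^{n-1}_{j,\,\cc}\circ\psi^n_c = \Psi^n_{j,\,\cc}$ and the identification $\pi_x\circ\psi^{j+1}_c(u)=\si_j\pi_x(u)$ are handled properly. The passage to the limit for the first equality is also sound: continuity of $\di_x\de_k$ together with $\Psi^n_{k,\,\cc}(B)\to\{c_{F_k}\}$ for the first term, and $\di_x\de_n\to 0$ for the second (your route via $R^nF\to F_*$ with $\de_*\equiv 0$ is valid; the paper gets the same thing more directly from $\|\di_x\de_n\|=O(\bar\eps^{2^n})$).

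The gap is in your error estimate for the second equality. You claim that pairing the diameter bound $|\pi_x(\Psi^n_{i,\,\cc}(w))-\pi_x(c_{F_i})|=O(\si^{n-i})$ with a Lipschitz bound on $q_i$ that is \emph{uniform in $i$} gives a cumulative error of order $O(\si^{n-k})$. It does not: with a uniform Lipschitz constant $L$, the cumulative error is bounded only by
\begin{equation*}
L\sum_{i=k}^{n-1}\si^{\,n-i} \;=\; L\bigl(\si+\si^2+\cdots+\si^{\,n-k}\bigr),
\end{equation*}
which is dominated by its \emph{largest} term ($i=n-1$, contributing $\si^1$), is of size $\Theta(\si)$, and tends to $L\si/(1-\si)$ as $n\to\infty$, not to zero. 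So the replacement of $\pi_x(\Psi^n_{i,\,\cc}(w))$ by $\pi_x(c_{F_i})$ is not justified by this argument. To close the gap you must bring in the super-exponential decay $\|Dq_i\|\leq\|D^2\de_i\|=O(\bar\eps^{2^i})$ that holds along the orbit $\{R^iF\}\subset\II(\bar\eps)$; then the cumulative error is
\begin{equation*}
O\!\left(\sum_{i=k}^{n-1}\bar\eps^{2^i}\si^{\,n-i}\right)=O\!\left(\si^{\,n}\sum_{i\geq k}\bar\eps^{2^i}\si^{-i}\right)=O(\si^{\,n})\longrightarrow 0.
\end{equation*}
This same bound $|q_i|\leq\|D\de_i\|=O(\bar\eps^{2^i})$ also makes absolute convergence of $\sum_i q_i(\pi_x(c_{F_i}))$ immediate, so your closing worry that this convergence "is not obvious from the recursion alone" and that one must rely on the geometric error instead is misplaced: the decay of the individual summands is exactly what saves both the convergence and the term-by-term replacement, and it does not follow from the additive recursion but from the a priori estimates on $\|\de_i\|$.
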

\begin{proof}
By Lemma \ref{recursive formula of de in N}, we see
\begin{equation*}
\di_x \de_{n}(w) = \ \di_x \de_{n-1} \circ \psi^n_c (w) - q_{n-1} \circ (\si_{n-1}\cdot x)
\end{equation*}
Recall the definition of $ q_k(x) $ in the equation \eqref{definition of q(y)}. Then inductively we obtain that \msk
\begin{equation*}
\begin{aligned}
\di_x \de_{n}(w) &= \ \di_x \de_{n-1} \circ \psi^n_c (w) - q_{n-1}(\pi_x \circ \psi^n_c (w)) \\[0.2em]
&= \ \di_x \de_{n-2} \circ (\psi^{n-1}_c \circ \psi^n_c (w)) - q_{n-2} \circ (\pi_x \circ \psi^{n-1}_c \circ \psi^n_c (w)) - q_{n-1} \circ (\pi_x \circ \psi^n_c (w)) \\
& \hspace{2in} \vdots \\
&= \ \di_x \de_k \circ \Psi^n_{k,\,\cc}(w) - \sum_{i=k}^{n-1} \; q_i \circ \big( \pi_x \circ \Psi^n_{i,\,{\bf c}}(w) \big) .
\end{aligned}
\end{equation*}
Recall that $ \| \,\di_x \de_{n} \| \leq C \bar \eps^{2^n} $ for some $ C>0 $ and $ \displaystyle \lim_{n \ra \infty} \Psi^n_{i,\,{\bf c}}(B) = \{ c_{F_i} \} $ for each $ i < n $. Thus passing the limit, we obtain 
\begin{equation*}
\di_x \de_k (c_F) = \lim_{n \ra \infty} \; \sum_{i=k}^{n-1} \, q_i \circ \big( \pi_x \circ \Psi^n_{i,\,{\bf c}}(w) \big) .
\end{equation*}
for $ w \in \Dom(R^nF) $. Since $ \di_x \de_k (c_F) $ is the single point for each $ k $ and the critical points of each level, $ c_{F_i} $ are contained in  $ \Psi^n_{i,\,{\bf c}}(B) $ for all $ n \in \N $. Then 
$$ \di_x \de_k (c_{F_k}) = \lim_{n \ra \infty} \; \sum_{i=k}^{n-1} \, q_i (\pi_x(c_{F_i})) . $$
\end{proof}

\msk

\subsection{Universal number $ b_2 $ and the asymptotic of $ \di_z \de_n $ and $ \di_y \de_n $} \label{Universal number b-1}

\begin{prop} \label{asymptotic of di-z de-n in Delta}
Let $ F $ be H\'enon-like diffeomorphism in $ \NN \cap \II(\bar \eps) $. Let $ \de_n $ be the third coordinate map of $ R^nF $ for $ n \in \N $. Then 
\begin{equation*}
\begin{aligned}
\di_z \de_n = b_2^{2^n} (1 + O(\rho^n))
\end{aligned}
\end{equation*}
for each $ n \in \N $ and $ 0 < \rho <1 $ where $ b_2 $ is a non zero number. 
\end{prop}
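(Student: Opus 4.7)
The plan is to iterate the multiplicative recursion for $ \di_z \de_k $ from Lemma \ref{recursive formula of de in N} into a product over binary words of length $ n $, and then extract the universal constant $ b_2 $ by an ergodic-theoretic argument modeled on the one used for the Jacobian asymptotic in \cite{Nam2}. Applying the identity
\[
\di_z \de_k(w) \,=\, \di_z \de_{k-1} \circ \psi^k_c(w) \,\cdot\, \di_z \de_{k-1} \circ \psi^k_v(w)
\]
iteratively downward from $ k = n $ to $ k = 0 $ doubles the number of factors at every step and unrolls to
\[
\di_z \de_n(w) \,=\, \prod_{{\bf w}\,\in\,\{v,c\}^n} \di_z \de \bigl( \Psi^n_{0,\,{\bf w}}(w) \bigr) ,
\]
where $ \Psi^n_{0,\,{\bf w}} = \psi^1_{w_1} \circ \psi^2_{w_2} \circ \cdots \circ \psi^n_{w_n} $ naturally generalizes $ \Psi^n_{0,\,\vv} $ and $ \Psi^n_{0,\,\cc} $ to arbitrary words.

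Taking the logarithm of the absolute value converts the product into the sum
\[
\log|\di_z \de_n(w)| \,=\, \sum_{{\bf w}\,\in\,\{v,c\}^n} \log \bigl|\di_z \de \circ \Psi^n_{0,\,{\bf w}}(w)\bigr|.
\]
The $ 2^n $ regions $ B^n_{\bf w} = \Psi^n_{0,\,{\bf w}}(B) $ form the level-$ n $ partition of the Cantor attractor $ \OO_F $, each of $ \mu $-measure $ 2^{-n} $, and the action of $ F $ on $ \OO_F $ cyclically permutes them. Up to an evaluation-point error controlled by $ \diam(B^n_{\bf w}) $, the sum therefore coincides with the Birkhoff sum $ \sum_{j=0}^{2^n-1} \log|\di_z \de \circ F^j(\zeta)| $ for $ \zeta \in \OO_F $. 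Setting
\[
b_2 \,=\, \exp \int_{\OO_F} \log|\di_z \de|\, d\mu ,
\]
which is non-zero because $ \di_z \de $ is bounded away from $ 0 $ on the compact invariant set $ \OO_F $ for a genuine diffeomorphism $ F $, one obtains the leading behavior $ \log|\di_z \de_n(w)| = 2^n \log b_2 + (\text{error}) $.

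The principal obstacle is upgrading this ergodic convergence to the exponential rate $ (1+O(\rho^n)) $: a direct application of the Birkhoff ergodic theorem yields only an error of $ o(2^n) $ in the logarithm, which is useless after exponentiation. To obtain $ O(\rho^n) $ I would combine (a) the exponential convergence $ R^k F \to F_* $ at rate $ \rho^k $, which forces $ \di_z \de_k $ to be close to its universal limit at each level; (b) the uniform exponential contraction of the $ v $-letter factors of $ \Psi^n_{0,\,{\bf w}} $; and (c) bounded-distortion estimates for the $ c $-letter factors coming from the $ C^3 $-control on $ F $. These inputs permit a telescoping decomposition in which the error introduced at renormalization step $ k $ is itself of order $ \rho^k $, so that summing the geometric series yields a net error of $ O(\rho^n) $ in $ \log|\di_z \de_n| $. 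The argument goes through cleanly because the recursion for $ \di_z \de $ is purely multiplicative, in contrast with those for $ \di_x \de_n $ and $ \di_y \de_n $ which carry an additive $ q_{k-1} $ term. Exponentiating then delivers the stated asymptotic $ \di_z \de_n = b_2^{2^n}(1 + O(\rho^n)) $.
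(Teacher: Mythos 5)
Your plan reproduces the paper's argument: unroll the multiplicative recursion from Lemma \ref{recursive formula of de in N} to the product $\di_z\de_n(w)=\prod_{{\bf w}\in W^n}\di_z\de\circ\Psi^n_{\bf w}(w)$, take logarithms, and read the normalized sum $l_n(w)=2^{-n}\sum_{\bf w}\log|\di_z\de\circ\Psi^n_{\bf w}(w)|$ as a Riemann sum for $\int_{\OO_F}\log|\di_z\de|\,d\mu$. Defining $b_2=\exp\int_{\OO_F}\log|\di_z\de|\,d\mu$ and observing that $\di_z\de\neq 0$ on $\OO_F$ (because $F\in\NN$ forces $\di_y\de$ to vanish wherever $\di_z\de$ does, killing $\Jac F$) is exactly the paper's footnote. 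You also correctly single out the crux: the plain ergodic theorem only gives $o(2^n)$ in the logarithm, which is useless after exponentiation, so an exponential rate is required.

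Where your sketch drifts from the paper, and where it would not quite close, is the rate estimate itself. You describe a telescoping decomposition ``in which the error introduced at renormalization step $k$ is of order $\rho^k$, so that summing the geometric series yields a net error of $O(\rho^n)$.'' That arithmetic does not produce what you need: $\sum_{k\le n}\rho^k$ is $O(1)$, not $O(\rho^n)$. The mechanism the paper actually uses is a one-shot Riemann-sum bound, not a telescope over levels. Since $\mu(\OO_{\bf w})=2^{-n}$ for every word of length $n$, one has
\[
l_n(w)-\log b_2 \;=\; 2^{-n}\sum_{{\bf w}\in W^n}\Bigl(\log\bigl|\di_z\de\circ\Psi^n_{\bf w}(w)\bigr|-2^n\!\int_{\OO_{\bf w}}\log|\di_z\de|\,d\mu\Bigr),
\]
and each bracketed term is controlled by the Lipschitz constant of $\log|\di_z\de|$ times $\diam(B^n_{\bf w})\le C\si^n$. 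Thus $l_n(w)=\log b_2+O(\si^n)$, and multiplying by $2^n$ gives $\log|\di_z\de_n(w)|=2^n\log b_2+O\bigl((2\si)^n\bigr)$, which is exponentially small precisely because the period-doubling scaling satisfies $\si<1/2$. Your ingredients (b) and (c) (uniform contraction of $v$-factors, bounded distortion for $c$-factors) are exactly what supplies the $\diam(B^n_{\bf w})\le C\si^n$ bound for \emph{arbitrary} words, so the content is there; but the argument is a single Riemann-sum comparison against $\mu$, not an accumulation of per-level $\rho^k$ errors, and the constant $\rho$ comes out as $2\si$ (not something you can tune freely). Your ingredient (a), that $\di_z\de_k$ is ``close to its universal limit at each level,'' is also off: the renormalization limit $F_*$ is degenerate and $\di_z\de_k\to 0$ super-exponentially, so there is no non-trivial universal target for $\di_z\de_k$ at fixed $k$ --- the universality is in the double-exponent $b_2^{2^k}$ growth rate, which is exactly what you are trying to extract.
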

\begin{proof}
The recursive formula of $ \di_z \de_n $ in Lemma \ref{recursive formula of de in N} implies the following equation by inductive calculation \msk
\begin{equation} \label{de-z recursive formula}
\begin{aligned}
\di_z \de_{n}(w) &= \ \di_z \de_{n-1} \circ (F_{n-1} \circ H^{-1}_{n-1}(\si_{n-1} w)) \cdot \di_z \de_{n-1} \circ H^{-1}_{n-1}(\si_{n-1} w) \\[0.3em]
&= \ \di_z \de_{n-1} \circ \psi^n_c(w) \cdot \di_z \de_{n-1} \circ \psi^n_v(w) \\[0.3em]
&= \ \di_z \de_{n-2} \circ (\psi^{n-1}_c \circ \psi^n_c(w)) \cdot \di_z \de_{n-2} \circ (\psi^{n-1}_v \circ \psi^n_c(w)) \\
& \qquad \cdot \di_z \de_{n-2} \circ (\psi^{n-1}_c \circ \psi^n_v(w)) \cdot \di_z \de_{n-2} \circ  (\psi^{n-1}_v \circ \psi^n_v(w)) \\
& \hspace{1in} \vdots \\
&= \ \prod_{{\bf w} \in \; W^n}  \di_z \de \circ \Psi^n_{{\bf w}}(w) .
\end{aligned}
\end{equation}
where $ {\bf w} $ is the word of length $ n $ in $ W^n = \{v \ c\}^n $. The number of words in $ W^n $ is $ 2^n $. Let us take the logarithmic average of $ | \:\! \di_z \de_n | $ on the regions $ \Psi^n_{{\bf w}}(B) $ and let this map be $ l_n(w) $ for each $ n \in \N $ %By the equation \eqref{de-z recursive formula},
\footnote{If $ \di_z \de (w) = 0 $ for some $ w \in B $, then $ \di_y \de (w) = 0 $ at the same point because $ F \in \NN $. Thus $ \Jac F(w) = 0 $, that is, $ F $ cannot be a dffeomorphism. Moreover, $ \di_z \de $ is defined on some compact set which contains the set $ \bigcup_{{\bf w} \in \; W^n} \Psi^n_{{\bf w}}(B) $. Then we may assume that $ \di_z \de (w) $ has the positive lower bounds (or negative upper bounds) on $ B^1_v \cup B^1_c $.}
\ssk
\begin{equation} \label{average of de-z n}
\begin{aligned}
l_n(w) = \frac{1}{\;2^n} \sum_{{\bf w} \in \; W^n} \log |\, \di_z \de \circ \Psi^n_{{\bf w}}(w) | .
\end{aligned} \ssk
\end{equation}

\nin The limit of $ l_n (w) $ as $ n \ra \infty $ is a function defined on the critical Cantor set, $ \OO_F $ as $ n \ra \infty $. However, values of the limit function at all points of $ \OO_F $ are the same as each other, that is, the limit is a constant function. In particular, we have
\ssk
\begin{equation*}
\begin{aligned}
l_n(w) \lra \int_{\OO_F} \log |\:\! \di_z \de | \; d\mu .
\end{aligned} \ssk
\end{equation*} 
where $ \mu $ is the unique ergodic probability measure on $ \OO_F $. Let this limit be $ \log b_2 $ \,for some $ b_2 > 0 $. Since $ \diam (\Psi^n_{{\bf w}}(B)) \leq C \si^n $ for some $ C>0 $ and for all $ {\bf w} \in W^n $, the above equation \eqref{average of de-z n} converges exponentially fast as $ n \ra \infty $. In other words,
\begin{equation*}
\begin{aligned}
\frac{1}{\;2^n} \log |\;\! \di_z \de_n (w) | & = \ \log b_2 + O(\rho_0^n) 
\end{aligned}
\end{equation*}
for some $ 0< \rho_0 <1 $. Let us choose the constant $  \rho = \rho_0 / 2 $. Then we obtain the following equation
\begin{equation} \label{b-2 as the asymptotic of di-z de}
\begin{aligned}
\log |\, \di_z \de_n (w) | & = \ 2^n \log b_2 + O(\rho^n) \\
& = \ 2^n \log b_2 + \log (1 + O(\rho^n)) \\
& = \ \log \, b_2^{2^n} (1 + O(\rho^n)) .
\end{aligned}
\end{equation}
Hence,
\begin{equation}
| \;\!\di_z \de_n | = b_2^{2^n} (1 + O(\rho^n)) .
\end{equation}
%By the assumption, $ \di_z \de $ is not zero at any point. Then 
We may assume that $ \di_z \de $ is non zero. The proof is complete.
\end{proof}
\msk

%\begin{prop}[Pliss' lemma] \label{Pliss lemma}
%Given $ 0 < c_1 < c_2 < A $ there exist $ \theta > 0 $ such that for all $ a_1,\ldots, a_N $ with $ a_j \leq A $ for every $ 1 \leq j \leq N $ and
%$$ \sum^N_{j=1} a_j \geq c_2N, $$
%there exist $ l > \theta N $ and $ 1<n_1 < \cdots < n_l \leq N $ so that for every $ 1 \leq i \leq l $ and $ 0 \leq n < n_i $
%$$ \sum^{n_i}_{j=n+1} a_j \geq c_1(n_i - n)$$
%\end{prop}

%\newpage
\begin{lem} \label{asymptotic of di-y de-n in Delta}
Let $ F $ be the H\'enon-like diffeomorphism in $ \NN \cap \II(\bar \eps) $. Let $ \de_n(w) $ be the third coordinate map of $ F_n \equiv R^nF $ for each $ n \in \N $. Then the following equation holds \msk
\begin{equation*}
\begin{aligned}
%& \quad \ \ 
&  \di_y \de_n(w) \cdot \di_z \de_k \circ \Psi^n_{k,\,\vv}(w) \\[-0.5em] 
& \qquad \qquad = \ \di_z \de_n(w) \cdot \Big[\, \di_y \de_k \circ \Psi^n_{k,\,\vv}(w) + \sum_{i=k}^{n-1} \; q_i \circ \big( \pi_y \circ \Psi^n_{i,\,{\bf v}}(w) \big) \cdot \di_z \de_k \circ \Psi^n_{k,\,\vv}(w) \,\Big]
\end{aligned}
\end{equation*} 
for $ k < n $. Moreover, 
\begin{equation*}
\di_y \de_k \circ \Psi^n_{k,\,\vv}(w) \cdot \big[\,\di_z \de_k \circ \Psi^n_{k,\,\vv}(w)\,\big]^{-1} + \sum_{i=k}^{n-1} \; q_i \circ \big( \pi_y \circ \Psi^n_{i,\,{\bf v}}(w) \big) \ \leq \ C \si^{n-k}
\end{equation*}
for some $ C>0 $ and $ 0 < \rho <1 $. %Moreover, \,$ \di_y \de_n(w) \leq C \si^n \:\! b_2^{2^n}(1 + O(\rho^n)) $ for each $ n \in \N $.
\end{lem}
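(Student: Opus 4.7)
The plan is to iterate the recursion of Lemma~\ref{recursive formula of de in N} after passing to the ratio $\di_y\de_k/\di_z\de_k$, so that the common factor $\di_z\de_{k-1}\circ\psi^k_c$ cancels. The nonvanishing of $\di_z\de_k$ established in Proposition~\ref{asymptotic of di-z de-n in Delta} (cf.\ the footnote there) licenses the division, and the invariance of $\NN\cap\II(\bar\eps)$ under renormalization (Theorem~\ref{Invariance of the space-NN}) guarantees that the hypothesis $F_k\in\NN$ persists at every level.

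First I would divide the two formulas for $\di_y\de_k(w)$ and $\di_z\de_k(w)$ in Lemma~\ref{recursive formula of de in N}. The factor $\di_z\de_{k-1}\circ\psi^k_c(w)$ cancels and, using $\si_{k-1}y=\pi_y\circ\psi^k_v(w)$ as in the proof of Theorem~\ref{Invariance of the space-NN}, this yields
\[
\frac{\di_y\de_k}{\di_z\de_k}(w)\;=\;\frac{\di_y\de_{k-1}}{\di_z\de_{k-1}}\circ\psi^k_v(w)\;+\;q_{k-1}\bigl(\pi_y\circ\psi^k_v(w)\bigr).
\]
Iterating this one-step recursion $n-k$ times produces the identity
\[
\frac{\di_y\de_n}{\di_z\de_n}(w)\;=\;\frac{\di_y\de_k}{\di_z\de_k}\circ\Psi^n_{k,\vv}(w)\;+\;\sum_{i=k}^{n-1}q_i\bigl(\pi_y\circ\Psi^n_{i,\vv}(w)\bigr),
\]
and multiplying through by $\di_z\de_n(w)\cdot\di_z\de_k\circ\Psi^n_{k,\vv}(w)$ immediately recovers the first displayed equation of the lemma.

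For the inequality in the second part, the key observation is that the bracketed expression on the left equals $\di_y\de_n(w)/\di_z\de_n(w)$ by the identity just proved, so the task reduces to estimating this ratio uniformly. I would first evaluate at the tip: since $F_n\in\NN$ and $\tau_n=F_n(c_{F_n})$, the $\NN$ equation gives $\di_y\de_n(\tau_n)/\di_z\de_n(\tau_n)=-\di_x\de_n(c_{F_n})$, and Proposition~\ref{recursive formula of di-x de-n} expresses the right-hand side as the convergent tail $-\sum_{i=n}^{\infty}q_i(\pi_x c_{F_i})$. To transport this to a general $w\in\Dom(R^nF)$, I would combine the estimate $\Psi^n_{i,\vv}(w)-\tau_i=O(\si^{n-i})$ (which comes from $\diam\Psi^n_{i,\vv}(B)\leq C\si^{n-i}$) with $C^1$-smoothness of $\di_y\de_k/\di_z\de_k$ and of each $q_i$, together with the exponential decay of $\|q_i\|$ arising from the convergence $F_i\to F_*$ in the stable manifold.

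The main obstacle will be the last packaging step: combining one shift of order $\si^{n-i}$ at each summand $q_i$ with the infinite tail $\sum_{i\geq n}q_i(\pi_y\tau_i)$, and then showing that the total is dominated by $C\si^{n-k}$. Here the contraction of $\Psi^n_{k,\vv}$ at rate $\si^{n-k}$ has to be balanced against the decay rate of $\|q_i\|$ in the individual summands, and the clean estimate ultimately relies on the super-exponential smallness of $\de_i$ available throughout $\II(\bar\eps)$.
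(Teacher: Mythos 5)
Your argument tracks the paper's proof closely: the identity is obtained by iterating the recursion of Lemma~\ref{recursive formula of de in N} (you divide first to get a telescoping ratio recursion, the paper iterates the raw product form and clears the common factor at the end, but the computation is the same), and the inequality follows from identifying the limiting value with the tail of the sum via the $\NN$ relation and Proposition~\ref{recursive formula of di-x de-n}, with the $\si^{n-k}$ rate coming from $\diam\Psi^n_{i,\vv}(B)\lesssim\si^{n-i}$, exactly as in the paper. Only be careful that ``evaluate $\di_y\de_n/\di_z\de_n$ at $\tau_n$ and transport'' must be executed through the level-$k$ decomposition you in fact describe (smoothness of $\di_y\de_k/\di_z\de_k$ on the small set $\Psi^n_{k,\vv}(B)$), not as a Lipschitz estimate for the level-$n$ ratio across all of $B(R^nF)$, which is not controlled.
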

%\ssk
\begin{proof}
Recall the recursive formula in Lemma \ref{recursive formula of de in N} \msk
\begin{align*}
%\di_y \de_{n}(w) & = \ \di_z \de_{n-1} \circ (F_{n-1} \circ H^{-1}_{n-1}( \si_{n-1} w)) \cdot \Big[ \, \di_y \de_{n-1} \circ H^{-1}_{n-1}( \si_{n-1} w)) \\ 
%& \qquad +  \di_z \de_{n-1} \circ H^{-1}_{n-1}( \si_{n-1} w)) \cdot \frac{d}{dy}\; \de_{n-1} \big(\si_{n-1} y, f^{-1}_{n-1}(\si_{n-1} y),0 \big) \,\Big]
\di_y \de_{n}(w) &= \di_z \de_{n-1} \circ \psi^n_c (w) \cdot \di_y \de_{n-1} \circ \psi^n_v (w) + \di_z \de_n(w) \cdot q_{n-1} \circ (\pi_y \circ \psi^n_v (w)) \\[0.2em]
\di_z \de_n(w) &= \di_z \de_{n-1} \circ \psi^n_c(w) \cdot \di_z \de_{n-1} \circ \psi^n_v(w) . \\[-1em]
\end{align*} %
%Recall the definition of $ q_k(y) $ in the equation \eqref{definition of q(y)}. 
Then by the inductive calculation, we have the following equation
\msk %\label{calculation of recursive of di-y de from level n to zero}
\begin{align*}
&\quad \ \ \di_y \de_{n}(w) \\[0.5em]
&= \ \di_z \de_{n-1} \circ \psi^n_c (w) \cdot \di_y \de_{n-1} \circ \psi^n_v (w) + \di_z \de_n(w) \cdot q_{n-1} \circ (\pi_y \circ \psi^n_v (w)) \\[0.3em]
&= \ \di_z \de_{n-1} \circ \psi^n_c (w) \cdot \Big[\,\di_z \de_{n-2} \circ (\psi^{n-1}_c \circ \psi^n_v(w)) \cdot \di_y \de_{n-2} \circ (\psi^{n-1}_v \circ \psi^n_v(w)) \\[-0.2em]
&\qquad + \di_z \de_{n-1} \circ \psi^n_v (w) \cdot q_{n-2} \circ (\pi_y \circ (\psi^{n-1}_v \circ \psi^n_v (w)) \,\Big] + \di_z \de_n(w) \cdot q_{n-1} \circ (\pi_y \circ \psi^n_v (w)) \\[0.3em]
&= \ \di_z \de_{n-1} \circ \psi^n_c (w) \cdot \di_z \de_{n-2} \circ (\psi^{n-1}_c \circ \psi^n_v(w)) \cdot \di_y \de_{n-2} \circ (\psi^{n-1}_v \circ \psi^n_v(w)) \\
&\qquad + \di_z \de_n(w) \cdot \Big[\,q_{n-2} \circ (\pi_y \circ (\psi^{n-1}_v \circ \psi^n_v (w)) + q_{n-1} \circ (\pi_y \circ \psi^n_v (w))\,\Big] \\
& \hspace{2in} \vdots \\
&= \ \di_z \de_{n-1} \circ \psi^n_c (w) \cdot \di_z \de_{n-2} \circ (\psi^{n-1}_c \circ \psi^n_v(w)) \cdots  \di_z \de_k \circ (\psi^{k+1}_c \circ \psi^{k+2}_v \circ \cdots \circ \psi^n_v(w)) \\
&\qquad \cdot \di_y \de_k \circ \Psi^n_{k,\,\vv}(w) %\\ &\qquad 
+ \di_z \de_n(w) \; \sum_{i=k}^{n-1} \; q_i \circ \big( \pi_y \circ \Psi^n_{i,\,{\bf v}}(w) \big) . \\[-1em]
\end{align*} %\msk
Thus let us multiply $ \di_z \de_k \circ \Psi^n_{k,\,\vv}(w) $ to both sides. Then we obtain that \bsk
\begin{align*}
&\quad \ \ \di_y \de_{n}(w) \cdot \di_z \de_k \circ \Psi^n_{k,\,\vv}(w)  \\[-0.2em]
&= \ \di_z \de_n(w) \cdot \di_y \de_k \circ \Psi^n_{k,\,\vv}(w) + \di_z \de_n(w) \cdot \sum_{i=k}^{n-1} \; q_i \circ \big( \pi_y \circ \Psi^n_{i,\,{\bf v}}(w) \big) \cdot \di_z \de_k \circ \Psi^n_{k,\,\vv}(w) \\[-0.8em]
&= \ \di_z \de_n(w) \cdot \Big[\, \di_y \de \circ \Psi^n_{k,\,\vv}(w) + \sum_{i=k}^{n-1} \; q_i \circ \big( \pi_y \circ \Psi^n_{i,\,{\bf v}}(w) \big) \cdot \di_z \de_k \circ \Psi^n_{k,\,\vv}(w) \,\Big] 
\end{align*}
for $ k<n $. Since $ F_k $ is a diffeomorphism, there exists $ \big[\,\di_z \de_k \circ \Psi^n_{k,\,\vv}(w)\,\big]^{-1} $ for all $ w \in B(R^nF) $. Then
\begin{equation*}  \label{recursive formula of di-y de-n}
\begin{aligned}
%&\quad \ \ 
\di_y \de_n(w) 
% \\[-0.5em]  & 
= \ \di_z \de_n(w) \cdot \Big[\,\big(\,\di_z \de_k \circ \Psi^n_{k,\,\vv}(w)\,\big)^{-1} \cdot \di_y \de_k \circ \Psi^n_{k,\,\vv}(w) + \sum_{i=k}^{n-1} \; q_i \circ \big( \pi_y \circ \Psi^n_{i,\,{\bf v}}(w) \big) \,\Big]
\end{aligned}
\end{equation*}
Let us estimate the second factor the right side of the above equation %\eqref{recursive formula of di-y de-n}
\begin{equation} \label{estimation of second factor in di-y de-n}
\begin{aligned}
%&\quad \ \ 
\big(\,\di_z \de_k \circ \Psi^n_{k,\,\vv}(w)\,\big)^{-1} \cdot \di_y \de_k \circ \Psi^n_{k,\,\vv}(w)  + \sum_{i=k}^{n-1} \; q_i \circ \big( \pi_y \circ \Psi^n_{i,\,{\bf v}}(w) \big) .
\end{aligned} \ssk
\end{equation} 
%Recall the definition of the tip in \eqref{definition of the tip} and the critical point as the limit
%\begin{equation*}
%\{\tau_{i} \} = \lim_{n\ra \infty} \Psi^n_{i,\,{\bf v}}(B), \quad  \{ c_{F_i} \} = \lim_{n \ra \infty} \Psi^n_{i,\,{\bf c}}(B) 
%\end{equation*}
%for each $ i \in \N $ and $ \tau_{i} \in \Psi^n_{i,\,{\bf v}}(B) $ for all $ i \in \N $. 

\nin Since $ F_i(c_{F_i}) = \tau_{i} $ and H\'enon-like map $ F_i $ is $ (f_i(x) -\eps_i(w),\ x,\ \de_i(w)) $, observe that 
\begin{equation*}
\pi_x(c_{F_i}) = \pi_y(\tau_{i})
\end{equation*}
for every $ i \in \N $. By Proposition \ref{recursive formula of di-x de-n}, we have the following equation
\begin{equation*}
\di_x \de_k (c_F) = \lim_{n \ra \infty} \; \sum_{i=k}^{n-1} \, q_i \circ \big( \pi_x \circ \Psi^n_{i,\,{\bf c}}(w) \big) 
\end{equation*}
and it converges exponentially fast. Recall the fact that \msk
\begin{align}
 \label{Psi n-i-c expression} \pi_x \circ \Psi^n_{i,\,{\bf c}}(w) &= \si_{n,\,i}\,x \\[0.2em]
\pi_y \circ \Psi^n_{i,\,{\bf c}}(w) &= \si_{n,\,i}\,y  \label{Psi n-i-v expression}
\end{align} 
for $ i < n $. Then the expression \eqref{Psi n-i-v expression} converges with the same rate of the expression \eqref{Psi n-i-c expression}. Take the limit of \eqref{estimation of second factor in di-y de-n}. 
 %\label{limit of the second factor in di-y de-n}
\begin{align*}
&\quad \ \lim_{n \ra \infty} \big(\,\di_z \de_k \circ \Psi^n_{k,\,\vv}(w)\,\big)^{-1} \cdot \di_y \de_k \circ \Psi^n_{k,\,\vv}(w)  + \lim_{n \ra \infty} \sum_{i=k}^{n-1} \; q_i \circ \big( \pi_y \circ \Psi^n_{i,\,{\bf v}}(w) \big)  \\[-0.5em]
&= \ \big( \di_z \de_k (\tau_k) \big)^{-1} \cdot \di_y \de_k (\tau_k) + \lim_{n \ra \infty} \sum_{i=k}^{n-1} \; q_i \circ \big( \pi_y(\tau_{i}) \big) \\ 
&= \ \big( \di_z \de_k (\tau_k) \big)^{-1} \cdot \di_y \de_k (\tau_k) + \lim_{n \ra \infty} \; \sum_{i=k}^{n-1} \, q_i \circ \big( \pi_x (c_{F_i} ) \big) \\[0.3em]
&= \ \big( \di_z \de_k (\tau_k) \big)^{-1} \cdot \di_y \de_k (\tau_k) + \di_x \de_k (c_{F_k}) \\[1em]
&= \ 0  \\[-1.3em]
\end{align*} %\msk
%
%Since $ F \in \NN $ %, $ c_{F_i} \in \Psi^n_{i,\,{\bf c}}(B) $ and $ \tau_{F_i} \in \Psi^n_{i,\,{\bf v}}(B) $ for all $ n \in \N $,
%and it is invariant under renormalization operator, the above expression \eqref{limit of the second factor in di-y de-n} is zero. 
Recall that $ \diam \Psi^n_{k,\,{\bf w}}(B) \leq C \si^{n-k} $ for some $ c > 0 $. Thus $ \Psi^n_{k,\,\vv}(B) $ and $ \Psi^n_{k,\,\cc}(B) $ converge to $ \tau_F $ and $ c_F $ respectively as $ n \ra \infty $ at the same rate. %Moreover, the expression, $ \displaystyle{\sum_{i=0}^{n-1} \; q_i \circ \big( \pi_y \circ \Psi^n_{i,\,{\bf v}}(w) \big)} $ converges exponentially fast as $ n \ra \infty $ by Corollary \ref{distortion parts of z-coordinate of Psi}. 
%\ssk \\
Then %the following asymptotic holds
\begin{equation} \label{asymptotic of second factor in di-y de-n}
\big(\,\di_z \de_k \circ \Psi^n_{k,\,\vv}(w)\,\big)^{-1} \cdot \di_y \de_k \circ \Psi^n_{k,\,\vv}(w) + \sum_{i=k}^{n-1} \; q_i \circ \big( \pi_y \circ \Psi^n_{i,\,{\bf v}}(w) \big) \leq C \si^{n-k}
\end{equation}
for some $ C>0 $. 
%Hence, applying the equation \eqref{asymptotic of di-z de-n in Delta eq-label} and \eqref{asymptotic of second factor in di-y de-n} to the equation \eqref{recursive formula of di-y de-n eq-label}, we obtain that
%\begin{equation*}
%\di_y \de_n(w) \leq C \si^n b_2^{2^n}(1 + O(\rho^n))
%\end{equation*}
%for some $ C>0 $ where $ 0<\rho <1 $. 
\end{proof}

%\msk

\subsection{Universal number $ b_1 $ and $ \di_y \eps $}
In Section \ref{Universal number b-1}, the universal number $ b_2 $ represents asymptotic for $ \di_z \de $. Universality of Jacobian determinant implies that average Jacobian of $ F $, $ b_F $ is the universal number. Define the number $ b_1 $ as the ratio $ b_F/b_2 $ and we would show that $ b_1 $ is also the universal number which describes the asymptotic of $ \di_y \eps $.
\ssk \\
Suppose that H\'enon-like diffeomorphism $ F $ is in $ \NN \cap \II(\bar \eps) $. %Define $ b_1 = b/ b_2 $. 
By the universal expression of Jacobian determinant with average Jacobian and Proposition \ref{asymptotic of di-z de-n in Delta}, we obtain that\ssk
\begin{align*}
\Jac F_k(w) & = b_F^{2^k}a(x)(1+O(\rho^k)) \\[0.2em]
&=  \di_y \eps_k(w) \cdot \di_z \de_k(w) - \di_z \eps_k(w) \cdot \di_y \de_k(w) \\[0.2em]
&= \big[\, \di_y \eps_k(w) - \di_z \eps_k(w) \cdot \di_y \de_k(w) \cdot \big(\,\di_z \de_k(w) \big)^{-1} \,\big] \cdot \di_z \de_k(w) \\[0.2em]
&= \big[\, \di_y \eps_k(w) - \di_z \eps_k(w) \cdot \di_y \de_k(w) \cdot \big(\,\di_z \de_k(w) \big)^{-1} \,\big] \cdot b_2^{2^k}(1 + O(\rho^k)) . \\[-1em]
\end{align*} %\msk
The above equation implies the existence of another universal number $ b_F/b_2 $. 

\begin{lem} \label{Another universal number b_1}
Let $ F $ be the H\'enon-like map in $ \NN \cap \II(\bar \eps) $. Then there exists the number $ b_1 \equiv b_F/b_2 $ satisfying the following equation
\begin{equation*} \label{expression for b-1}
\di_y \eps_k(w) - \di_z \eps_k(w) \cdot \di_y \de_k(w) \cdot \big(\,\di_z \de_k(w) \big)^{-1} = b_1^{2^k}a(x)(1 + O(\rho^k)) 
\end{equation*}
for each $ k \in \N $.
\end{lem}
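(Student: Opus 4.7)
The plan is to recognize that the lemma is essentially a direct algebraic consequence of two earlier results, with no new analytic input: the universality theorem from \cite{Nam2} for $\Jac F_k$ quoted in Section 2, and Proposition \ref{asymptotic of di-z de-n in Delta} for the asymptotics of $\di_z \de_k$. The calculation displayed just before the lemma statement already contains the idea; I would organize it as a proof in three short steps.

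First I would compute the Jacobian determinant of $F_k$ directly from its H\'enon-like form $F_k(x,y,z) = (f_k(x) - \eps_k(w),\, x,\, \de_k(w))$. The middle row of the derivative matrix is $(1,0,0)$, so expanding the $3\times 3$ determinant along it yields immediately
\[
\Jac F_k(w) = \di_y \eps_k(w)\cdot \di_z \de_k(w) - \di_z \eps_k(w)\cdot \di_y \de_k(w).
\]
Second, I would factor $\di_z \de_k(w)$ from the right-hand side, producing precisely the bracketed quantity appearing in the lemma:
\[
\Jac F_k(w) = \big[\,\di_y \eps_k(w) - \di_z \eps_k(w)\cdot \di_y \de_k(w) \cdot (\di_z \de_k(w))^{-1}\,\big]\cdot \di_z \de_k(w).
\]
Third, I would substitute $\Jac F_k = b_F^{2^k}a(x)(1+O(\rho^k))$ and $\di_z \de_k = b_2^{2^k}(1+O(\rho^k))$ and divide; the ratio of the two $(1+O(\rho^k))$ factors is itself of the form $1+O(\rho^k)$ (possibly after replacing $\rho$ by a slightly larger value less than $1$), so the bracketed quantity equals $(b_F/b_2)^{2^k}a(x)(1+O(\rho^k))$. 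Setting $b_1 := b_F/b_2$ then yields the claim.

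The only step that involves content beyond algebra is the legitimacy of the division by $\di_z \de_k$, i.e.\ the non-vanishing of $b_2$, and this is the one point I would flag explicitly. It is handled by the footnote attached to equation \eqref{average of de-z n}: since $F \in \NN$, whenever $\di_z \de(w) = 0$ the defining relation forces $\di_y \de(w) = 0$ as well, hence $\Jac F(w) = 0$, contradicting the diffeomorphism hypothesis; so $\di_z \de$ is bounded away from zero on the relevant compact neighbourhood of $\OO_F$, $b_2 \neq 0$, and $(\di_z \de_k)^{-1}$ is uniformly controlled. With this observation in hand, no step of the argument presents a genuine obstacle, and the proof is a matter of combining the two asymptotic identities by a one-line factorisation.
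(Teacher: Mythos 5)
Your proposal is correct and follows essentially the same argument the paper uses: the displayed computation immediately preceding the lemma statement in the paper is exactly your three-step factorisation (compute $\Jac F_k$, factor out $\di_z \de_k$, substitute the universality of $\Jac F_k$ and Proposition \ref{asymptotic of di-z de-n in Delta}), and the paper gives no further proof. Your explicit flagging of the non-vanishing of $\di_z \de_k$ via the footnote attached to \eqref{average of de-z n} is a sound and appropriate addition, but it does not change the route.
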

\msk
\begin{lem} \label{di-y eps and q asymptotic from n to k}
Let $ F $ be the H\'enon-like diffeomorphism in $ \NN \cap \II(\bar \eps) $. %Denote $ F_k $ as $ R^kF $. 
Then the following equation holds for $ k < n $
$$ \di_y \eps_k \circ (\Psi^n_{k,\,\vv}(w)) + \di_z \eps_k \circ (\Psi^n_{k,\,\vv}(w)) \cdot \sum_{i=k}^{n-1} \; q_i \circ \big( \pi_y \circ \Psi^n_{i,\,{\bf v}}(w) \big) \leq C_1\,b_1^{2^k} + C_2\,\bar \eps^{2^k} \si^{n-k} $$
where $ w \in B(R^nF) $ for some positive $ C_1 $ and $ C_2 $.
\end{lem}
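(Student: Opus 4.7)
The plan is to reduce the estimate to two previously established facts: the universal asymptotic from Lemma \ref{Another universal number b_1} and the cancellation bound proved at the end of Lemma \ref{asymptotic of di-y de-n in Delta}. First, I would apply Lemma \ref{Another universal number b_1} at the point $\Psi^n_{k,\vv}(w) \in B(R^kF)$ to solve for the first summand as
\begin{equation*}
\di_y \eps_k \circ \Psi^n_{k,\vv}(w) = \di_z \eps_k \circ \Psi^n_{k,\vv}(w) \cdot \frac{\di_y \de_k \circ \Psi^n_{k,\vv}(w)}{\di_z \de_k \circ \Psi^n_{k,\vv}(w)} + b_1^{2^k}\,a\!\left(\pi_x \Psi^n_{k,\vv}(w)\right)\!\bigl(1+O(\rho^k)\bigr).
\end{equation*}

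Next, I would substitute this identity into the left-hand side of the claim and factor $\di_z \eps_k \circ \Psi^n_{k,\vv}(w)$ from the two terms that carry it. What remains in the resulting bracket is exactly the quantity
\begin{equation*}
\bigl(\di_z \de_k \circ \Psi^n_{k,\vv}(w)\bigr)^{-1}\!\cdot \di_y \de_k \circ \Psi^n_{k,\vv}(w) \;+\; \sum_{i=k}^{n-1} q_i\!\circ\!\bigl(\pi_y \circ \Psi^n_{i,\vv}(w)\bigr),
\end{equation*}
which Lemma \ref{asymptotic of di-y de-n in Delta} bounds in absolute value by $C\,\si^{n-k}$.

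To conclude, I would invoke the standard super-exponential smallness of $\eps_k$ in $\II(\bar\eps)$, namely $\|\di_z \eps_k\| = O(\bar\eps^{2^k})$ on $\Psi^n_{k,\vv}(B)$, which follows from $\|\eps_k\| = O(\bar\eps^{2^k})$ together with Cauchy estimates on the fixed analytic neighborhood supporting the renormalization theory. Multiplying this bound by the factor $C\si^{n-k}$ from the bracket gives the $C_2\,\bar\eps^{2^k}\si^{n-k}$ summand in the claim, while uniform boundedness of the universal profile $a(x)\bigl(1+O(\rho^k)\bigr)$ delivers the $C_1\,b_1^{2^k}$ summand. Adding the two estimates yields the stated inequality.

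The only real subtlety is pinning down the decay $\|\di_z \eps_k\| = O(\bar\eps^{2^k})$ uniformly on the nested pieces $\Psi^n_{k,\vv}(B)$ for every $n \geq k$; once that is granted (from the standard $C^3$-norm contraction of $\eps_k$ under renormalization on a slightly larger polydisk, plus Cauchy's inequality), every other ingredient has already been set up in Lemmas \ref{asymptotic of di-y de-n in Delta} and \ref{Another universal number b_1}, and the argument is a short algebraic rearrangement.
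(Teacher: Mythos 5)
Your proposal is correct and follows essentially the same route as the paper: both proofs combine Lemma \ref{Another universal number b_1} with the $C\si^{n-k}$ cancellation bound from Lemma \ref{asymptotic of di-y de-n in Delta} and the super-exponential decay $\|\di_z\eps_k\| = O(\bar\eps^{2^k})$. The only cosmetic difference is the order of algebra: the paper starts from $b_1^{2^k}a(\cdot)(1+O(\rho^k))$, rewrites $\di_y\de_k/\di_z\de_k$ via the recursive formula to isolate the left-hand side plus a remainder $\di_z\eps_k\cdot(\di_z\de_n)^{-1}\di_y\de_n$, whereas you solve the universality identity for $\di_y\eps_k$, substitute into the left-hand side, and factor out $\di_z\eps_k$; since $(\di_z\de_n)^{-1}\di_y\de_n$ equals the bracket you bound, the two arguments coincide term by term.
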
 %\ssk
\begin{proof}
The equation \eqref{recursive formula of di-y de-n} and \eqref{expression for b-1} implies that %\msk
 %\label{eq-b1-2-k with partial derivatives of de-k}
\begin{align*}
&\quad \ \ b_1^{2^k}a \circ (\Psi^n_{k,\,\vv}(w))(1 + O(\rho^k)) \\[0.3em]
&= \ \di_y \eps_k \circ (\Psi^n_{k,\,\vv}(w)) - \di_z \eps_k \circ (\Psi^n_{k,\,\vv}(w)) \cdot \di_y \de_k \circ (\Psi^n_{k,\,\vv}(w)) \cdot \big(\,\di_z \de_k \circ (\Psi^n_{k,\,\vv}(w)) \big)^{-1} \\[0.7em]
&= \ \di_y \eps_k \circ (\Psi^n_{k,\,\vv}(w)) \\[-0.5em]
& \qquad - \di_z \eps_k \circ (\Psi^n_{k,\,\vv}(w)) \cdot \Big[\, -\sum_{i=k}^{n-1} \; q_i \circ \big( \pi_y \circ \Psi^n_{i,\,{\bf v}}(w) \big) + \big(\di_z \de_n(w)\big)^{-1} \cdot \di_y \de_n(w) \,\Big] \\[-0.4em]
&= \ \di_y \eps_k \circ (\Psi^n_{k,\,\vv}(w)) + \di_z \eps_k \circ (\Psi^n_{k,\,\vv}(w)) \cdot \sum_{i=k}^{n-1} \; q_i \circ \big( \pi_y \circ \Psi^n_{i,\,{\bf v}}(w) \big) \\[-0.2em]
& \qquad - \di_z \eps_k \circ (\Psi^n_{k,\,\vv}(w)) \cdot \big(\di_z \de_n(w)\big)^{-1} \cdot \di_y \de_n(w) \\[-1em]
\end{align*} %\msk
Lemma \ref{asymptotic of di-y de-n in Delta} implies that
$$ \| \:\!\di_z \eps_k \circ (\Psi^n_{k,\,\vv}(w)) \| \cdot \| \:\! \big(\di_z \de_n(w)\big)^{-1} \cdot \di_y \de_n(w) \| \leq C_2\,\bar \eps^{2^k}\si^{n-k} $$
for some $ C_2 > 0 $ independent of $ k $. Hence,
\begin{equation*}
\di_y \eps_k \circ (\Psi^n_{k,\,\vv}(w)) + \di_z \eps_k \circ (\Psi^n_{k,\,\vv}(w)) \cdot \sum_{i=k}^{n-1} \; q_i \circ \big( \pi_y \circ \Psi^n_{i,\,{\bf v}}(w) \big) \leq C_1\,b_1^{2^k} + C_2\,\bar \eps^{2^k} \si^{n-k}
\end{equation*} 
\end{proof}

\msk

\section{Recursive formula of $ \Psi^n_k $}
\nin %Let the H\'enon-like map $ F \in \II_B(\bar \eps) $. 
In this section, let us calculate recursive formulas of some components of $ D\Psi^n_k $. Recall that $ \Psi^n_k $ is the conjugation between $ F_k^{2^{n-k}} $ and $ F_n $. These formulas in this section would be used in the estimation of minimal distances of a particular adjacent boxes and the diameter of boxes in the next sections.
\ssk

\begin{lem} \label{recursive formula of d, u, and t}
Let $ F \in \II(\bar \eps) $. %$ F_k $ and $ F_n $ denote $ k^{th} $ and $ n^{th} $ renormalized map of $ F $ respectively. 
The derivative of non-linear conjugation $ \Psi^n_k $ at the tip, $ \tau_{F_k} $ between $ F_k^{2^{n-k}} $ and $ F_n $ is as follows %\ssk
\begin{equation*}
\begin{aligned}
D^n_k \equiv D\Psi^n_k(\tau_n) = 
\begin{pmatrix}
\alpha_{n,\,k} & \si_{n,\,k}\, t_{n,\,k} & \si_{n,\,k}\, u_{n,\,k} \\[0.2em]
& \si_{n,\,k} & \\[0.2em]
& \si_{n,\,k}\, d_{n,\,k} & \si_{n,\,k}
\end{pmatrix}
\end{aligned} \ssk
\end{equation*}
where $ \si_{n,\,k} $ and $ \alpha_{n,\,k} $ are linear scaling factors such that $ \si_{n,\,k} = (-\si)^{n-k} (1 + O(\rho^k)) $ and $ \alpha_{n,\,k} = \si^{2(n-k)} (1 + O(\rho^k)) $.
Then 
\begin{align*}
d_{n,\,k} &= \sum_{i=k}^{n-1} d_{i+1,\,i} \, , \quad u_{n,\,k} = \sum_{i=k}^{n-1} \si^{i-k}\,u_{i+1,\,i}(1 + O(\rho^k))\\
  t_{n,\,k} &= \sum_{i=k}^{n-1} \si^{i-k}\, \big[\, t_{i+1,\,i} + u_{i+1,\,i}\,d_{n,\,i+1} \big](1 + O(\rho^k)) \\
  t_{n,\,k}- u_{n,\,k}\, d_{n,\,k} &= \sum_{i=k}^{n-1} \si^{i-k}\, \big[\, t_{i+1,\,i} - u_{i+1,\,i}\,d_{i+1,\,k} \big](1 + O(\rho^k)) 
\end{align*}
where\; $ \si^{i-k} (1 + O(\rho^k)) = {\displaystyle\prod_{j=k}^{i-1}} \dfrac{\alpha_{j+1,\,j}}{\si_{j+1,\,j}} $. Moreover, $ d_{n,\,k} $, $ u_{n,\,k} $ and $ t_{n,\,k} $ are convergent as $ n \ra \infty $ super exponentially fast.
\end{lem}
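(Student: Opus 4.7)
The plan is to decompose the conjugation $\Psi^n_k$ at level $i \in [k,n-1]$ as
\[
\Psi^n_k \;=\; \Psi^{i+1}_k \circ \Psi^n_{i+1},
\]
apply the chain rule at the tip $\tau_n$ (using $\Psi^n_{i+1}(\tau_n) = \tau_{i+1}$), and then run an induction on the resulting one-step recursion. The starting input is the factorization already recorded in the excerpt,
\[
D^n_k \;=\; D^{i+1}_k \cdot D^n_{i+1},
\]
with each factor of the form
\[
D^{j+1}_j \;=\; \begin{pmatrix} \alpha_{j+1,j} & \sigma_{j+1,j}\,t_{j+1,j} & \sigma_{j+1,j}\,u_{j+1,j} \\[0.1em] 0 & \sigma_{j+1,j} & 0 \\[0.1em] 0 & \sigma_{j+1,j}\,d_{j+1,j} & \sigma_{j+1,j} \end{pmatrix}.
\]

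First I would carry out the $3\times 3$ matrix multiplication $D^{k+1}_k \cdot D^n_{k+1}$ and compare entries with $D^n_k$. Comparison of the diagonal blocks recovers the multiplicativity $\alpha_{n,k}=\alpha_{k+1,k}\,\alpha_{n,k+1}$ and $\sigma_{n,k}=\sigma_{k+1,k}\,\sigma_{n,k+1}$, while the off-diagonal entries yield the one-step recursions
\begin{align*}
d_{n,k} &= d_{k+1,k} + d_{n,k+1}, \\
u_{n,k} &= u_{k+1,k} + \frac{\alpha_{k+1,k}}{\sigma_{k+1,k}}\,u_{n,k+1}, \\
t_{n,k} &= t_{k+1,k} + u_{k+1,k}\,d_{n,k+1} + \frac{\alpha_{k+1,k}}{\sigma_{k+1,k}}\,t_{n,k+1}.
\end{align*}
The $d$-recursion telescopes immediately to $d_{n,k}=\sum_{i=k}^{n-1} d_{i+1,i}$. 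The $u$- and $t$-recursions iterate to produce the advertised closed forms once one identifies the product
\[
\prod_{j=k}^{i-1}\frac{\alpha_{j+1,j}}{\sigma_{j+1,j}} \;=\; \sigma^{i-k}\bigl(1+O(\rho^k)\bigr),
\]
which is a straightforward consequence of $\alpha_{j+1,j}=\sigma^{2}(1+O(\rho^{j}))$ and $\sigma_{j+1,j}=(-\sigma)(1+O(\rho^j))$ together with the standard fact that a product of factors of the form $1+O(\rho^j)$ with $j\ge k$ is $1+O(\rho^k)$.

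The formula for $t_{n,k}-u_{n,k}\,d_{n,k}$ will come out by substituting the closed forms for $t_{n,k}$ and $u_{n,k}$ and using the telescoping identity
\[
d_{n,k}-d_{n,i+1} \;=\; \sum_{j=k}^{i} d_{j+1,j} \;=\; d_{i+1,k},
\]
so that $d_{n,i+1}-d_{n,k}=-d_{i+1,k}$, after which the $u_{i+1,i}$ coefficients collect into $t_{i+1,i}-u_{i+1,i}\,d_{i+1,k}$ term by term. Finally, super-exponential convergence as $n\to\infty$ follows because the one-step quantities $d_{i+1,i},\,u_{i+1,i},\,t_{i+1,i}$ all decay at the doubly exponential rate $O(\bar\eps^{\,2^i})$ coming from the smallness of $\eps_i,\delta_i$, while the geometric weights $\sigma^{i-k}$ are bounded by a constant; so each series is dominated by $\sum_i |\sigma|^{i-k}\bar\eps^{\,2^i}$, which converges super-exponentially fast.

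The main bookkeeping obstacle is the uniform tracking of the $(1+O(\rho^k))$ factors through the iterated product $\prod_{j=k}^{i-1}(\alpha_{j+1,j}/\sigma_{j+1,j})$ and through the convolution-type sum defining $t_{n,k}$; I would handle this by fixing the convention that $O(\rho^k)$ with $k$ the outer index absorbs all finite products $\prod_{j\ge k}(1+O(\rho^j))$, so that after iteration the error is still $1+O(\rho^k)$ and not $1+O(\rho^{i})$.
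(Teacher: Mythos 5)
Your proposal is correct and follows essentially the same route as the paper: decompose $D^n_k=D^{k+1}_k\cdot D^n_{k+1}$ via the chain rule at the tip, multiply the $3\times 3$ matrices to extract the one-step recursions for $d_{n,k}$, $u_{n,k}$, $t_{n,k}$, iterate, and obtain the $t_{n,k}-u_{n,k}d_{n,k}$ identity by the telescoping $d_{n,i+1}-d_{n,k}=-d_{i+1,k}$. The one step you assert rather than prove is the decay $d_{i+1,i},\,u_{i+1,i},\,t_{i+1,i}=O(\bar\eps^{2^i})$; the paper obtains this by computing the inverse $(D^{k+1}_k)^{-1}=\si_k\,DH_k(\tau_{F_k})$ and reading off the identifications $u_{i+1,i}\asymp\di_z\eps_i(\tau_{F_{i+1}})$, $d_{i+1,i}\asymp q_i(\pi_y(\tau_{i+1}))$, and $t_{i+1,i}-u_{i+1,i}d_{i+1,i}\asymp\di_y\eps_i(\tau_{F_{i+1}})$, from which the $O(\bar\eps^{2^i})$ bound on all three follows — you should include this identification to make the convergence claim self-contained.
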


\begin{proof}
$ D^n_k = D^m_k \cdot D^n_m $ for any $ m $ between $ k $ and $ n $ because $ \Psi^n_k(\tau_{F_n}) $ is $ \tau_{F_k} $, the tip of $ k^{th} $ level. By the direct calculation, we obtain that \ssk
\begin{equation*}
\begin{aligned}
& D^m_k \cdot D^n_m \\ %[0.3em]
& \quad = 
\begin{pmatrix}
\alpha_{n,\,k} & \boxed{ \alpha_{m,\,k}\,\si_{n,\,m}\, t_{n,\,m} + \si_{n,\,k}\,t_{m,\,k} + \si_{n,\,k}\,u_{m,\,k}\,d_{n,\,m} } & \boxed{ \alpha_{m,\,k}\si_{n,\,m}\, u_{n,\,m} + \si_{n,\,k}\,u_{m,\,k} } \ \ 
\\[0.2em]
& \si_{n,\,k} & \\[0.2em]
& \boxed{ \si_{n,\,k}\, d_{m,\,k} + \si_{n,\,k}\,d_{n,\,m} } & \si_{n,\,k} 
\end{pmatrix} .
\end{aligned} %\msk
\end{equation*}
 Then
\begin{align*}
\si_{n,\,k}\,t_{n,\,k} &= \alpha_{m,\,k}\,\si_{n,\,m}\, t_{n,\,m} + \si_{n,\,k}\,t_{m,\,k} + \si_{n,\,k}\,u_{m,\,k}\,d_{n,\,m} \\[0.3em]
\si_{n,\,k}\,u_{n,\,k} &= \alpha_{m,\,k}\,\si_{n,\,m}\, u_{n,\,m} + \si_{n,\,k}\,u_{m,\,k} \\[0.3em]
\si_{n,\,k}\,d_{n,\,k} &= \si_{n,\,k}\, d_{m,\,k} + \si_{n,\,k}\,d_{n,\,m} \\[-1em]
\end{align*} %\msk
for any $ m $ between $ k $ and $ n $. Recall that $ \si_{n,\,k} = \si_{n,\,m} \cdot \si_{m,\,k} $ and $ \alpha_{n,\,k} = \alpha_{n,\,m} \cdot \alpha_{m,\,k} $. Let $ m $ be $ k+1 $. Then
% \label{d-n,k recursive form}
\begin{align*}
d_{n,\,k} &= d_{n,\,k+1} + d_{k+1,\,k} \\[0.3em]
&= d_{n,\,k+2} + d_{k+2,\,k+1} + d_{k+1,\,k} \\
& \hspace{1in} \vdots \\
&= d_{n,\,n-1} + \cdots + d_{k+2,\,k+1} + d_{k+1,\,k} \\
&=  \sum_{i=k}^{n-1} d_{i+1,\,i} . \\[-1em]
\end{align*} %\msk
%
%Moreover, the absolute value each term is super exponentially small. More precisely, 
Each term is bounded by $ \eps^{2^{\!\:i}} $ for each $ i $, that is, $ |\,d_{i+1,\,i} | \asymp |\,q_i (\pi_y(\tau_{i+1}))| \leq \| D\de_i \| = O(\bar \eps^{2^{\!\:i}}) $. Then $ d_{n,\,k} $ converges to the number, say $ d_{*,\,k} $ super exponentially fast.
\ssk \\
Let us see the recursive formula of $ u_{n,\,k} $
% \label{u-n,k recursive form}
\begin{align*}
u_{n,\,k} &= \frac{\alpha_{k+1,\,k}}{\si_{k+1,\,k}} u_{n,\,k+1} + u_{k+1,\,k} \\
&= \frac{\alpha_{k+1,\,k}}{\si_{k+1,\,k}} \left[\,\frac{\alpha_{k+2,\,k+1}}{\si_{k+2,\,k+1}} u_{n,\,k+2} + u_{k+2,\,k+1} \,\right] + u_{k+1,\,k} \\
& \hspace{1in} \vdots \\
&= \sum_{i=k+1}^{n-1}\prod_{j=k}^{i-1} \frac{\alpha_{j+1,\,j}}{\si_{j+1,\,j}}\ u_{i+1,\,i} + u_{k+1,\,k}\\
&=  \sum_{i=k}^{n-1} \si^{i-k} u_{i+1,\,i}\, (1 + O(\rho^k)) . \\[-1em]
\end{align*} %\ssk
Since $ u_{i+1,\,i} \asymp \di_z \eps_i(\tau_{F_{i+1}}) $, $ u_{n,\,k} $ converges to the number, say $ u_{*,\,k} $ also super exponentially fast. %by the similar reason for $ d_{n,\,k} $. 
%\ssk \\
Let us see the recursive formula of $ t_{n,\,k} $ \msk
% \label{t-n,k recursive form}
\begin{align*}
t_{n,\,k} &= \frac{\alpha_{k+1,\,k}}{\si_{k+1,\,k}}\ t_{n,\,k+1} + t_{k+1,\,k} + u_{k+1,\,k}\,d_{n,\,k+1} \\
&= \frac{\alpha_{k+1,\,k}}{\si_{k+1,\,k}} \left[\,\frac{\alpha_{k+2,\,k+1}}{\si_{k+2,\,k+1}}\ t_{n,\,k+2} + t_{k+2,\,k+1} + u_{k+2,\,k+1}\,d_{n,\,k+2} \,\right] + t_{k+1,\,k} + u_{k+1,\,k}\,d_{n,\,k+1} \\
& \hspace{2in} \vdots \\
&= \sum_{i=k+1}^{n-1}\prod_{j=k}^{i-1} \frac{\alpha_{j+1,\,j}}{\si_{j+1,\,j}}\ t_{i+1,\,i} + t_{k+1,\,k} +  \sum_{i=k+1}^{n-1}\prod_{j=k}^{i-1} \frac{\alpha_{j+1,\,j}}{\si_{j+1,\,j}}\ u_{i+1,\,i}\,d_{n,\,i+1} + u_{k+1,\,k}\,d_{n,\,k+1} \\
&= \ \sum_{i=k}^{n-1} \si^{i-k} \big[ \,t_{i+1,\,i} + u_{i+1,\,i}\,d_{n,\,i+1} \,\big] (1 + O(\rho^k)) . \\[-1em]
\end{align*} %\msk
By the above equations for $ d_{n,\,k} $, $ u_{n,\,k} $ and $ t_{n,\,k} $, 
%\eqref{d-n,k recursive form}, \eqref{u-n,k recursive form} and \eqref{t-n,k recursive form}, 
we obtain the recursive formula of $ t_{n,\,k}- u_{n,\,k}\, d_{n,\,k} $ as follows
\msk
\begin{align*}
& \qquad  t_{n,\,k}- u_{n,\,k}\, d_{n,\,k} \\
&= \ \sum_{i=k+1}^{n-1}\prod_{j=k}^{i-1} \frac{\alpha_{j+1,\,j}}{\si_{j+1,\,j}} \big[ \,t_{i+1,\,i} + u_{i+1,\,i}\,d_{n,\,i+1} \,\big] + t_{k+1,\,k} + u_{k+1,\,k}\,d_{n,\,k+1} \\
& \qquad - \left[\, \sum_{i=k+1}^{n-1}\prod_{j=k}^{i-1} \frac{\alpha_{j+1,\,j}}{\si_{j+1,\,j}}\ u_{i+1,\,i} + u_{k+1,\,k} \right]  d_{n,\,k} \\
&= \ \sum_{i=k+1}^{n-1}\prod_{j=k}^{i-1} \frac{\alpha_{j+1,\,j}}{\si_{j+1,\,j}} \big[\,t_{i+1,\,i} + u_{i+1,\,i}\,d_{n,\,i+1} - u_{i+1,\,i} d_{n,\,k} \,\big] + t_{k+1,\,k} + u_{k+1,\,k}\,d_{n,\,k+1} - u_{k+1,\,k}\:\! d_{n,\,k} \\
&= \ \sum_{i=k+1}^{n-1}\prod_{j=k}^{i-1} \frac{\alpha_{j+1,\,j}}{\si_{j+1,\,j}} \big[\,t_{i+1,\,i} - u_{i+1,\,i} d_{i+1,\,k} \,\big] + t_{k+1,\,k} - u_{k+1,\,k}\:\! d_{k+1,\,k} \\
&= \ \ \sum_{i=k}^{n-1} \si^{i-k} \big[ \,t_{i+1,\,i} - u_{i+1,\,i}\,d_{i+1,\,k} \,\big] (1 + O(\rho^k)) . \\[-1em]
\end{align*} %\msk
%
%the general term, 
%$$ t_{i+1,\,i} + u_{i+1,\,i}\,d_{n,\,i+1} = \big[\,t_{i+1,\,i} - u_{i+1,\,i}\,d_{i+1,\,i}\,\big] + u_{i+1,\,i}\,d_{n,\,i} .  $$ 
%Thus $ \di_y \phi^{-1}_k(w) \asymp -t_k $. Moreover, \ssk
\nin Recall the derivative of coordinate change map at the tip on each level  \msk
\begin{equation*}
\begin{aligned}
\si_k \cdot DH_k (\tau_{F_k}) = (D^{k+1}_k)^{-1} = 
\begin{pmatrix}
(\alpha_k)^{-1} & & \\
& (\si_k)^{-1} & \\
& & (\si_k)^{-1}
\end{pmatrix}  \cdot
\begin{pmatrix}
1 & - t_k + u_k \, d_k & - u_k \\
& 1 & \\
& -d_k & 1
\end{pmatrix} .
\end{aligned} \msk
\end{equation*}
Since $ H_k(w) = (f_k(x) -\eps_k(w),\ y,\ z - \de_k(y,f_k^{-1}(y),0)) $, \ssk we see that $ \di_y \eps_k(\tau_{F_k}) \asymp - t_k + u_k \, d_k $ for every $ k \in \N $. Moreover, the fact that $ t_{i+1,\,i} - u_{i+1,\,i}\,d_{i+1,\,i} \asymp \di_y \eps_i(\tau_{F_{i+1}}) $ and $ |\,u_{i+1,\,i}\,d_{n,\,i}| $ is super exponentially small for each $ i<n $ implies that $ t_{n,\,k} $ converges to a number, say $ t_{*,\,k} $ super exponentially fast.

\end{proof}

\nin Recall the expression of $ \Psi^n_k $ from $ B(R^nF) $ to $ B^{n-k}_{{\bf v}}(R^kF) $ \msk
\begin{align*}
\Psi^n_k(w) = 
\begin{pmatrix}
1 & t_{n,\,k} & u_{n,\,k} \\[0.2em]
& 1 & \\
& d_{n,\,k} & 1
\end{pmatrix}
\begin{pmatrix}
\alpha_{n,\,k} & & \\
& \si_{n,\,k} & \\
& & \si_{n,\,k}
\end{pmatrix}
\begin{pmatrix}
x + S^n_k(w) \\
y \\[0.2em]
z + R^n_k(y)
\end{pmatrix} \\[-1em]
\end{align*}
where $ {\bf v} = v^{n-k} \in W^{n-k} $. Recall that $ \Psi^n_k $ be the map from $ B(R^nF) $ to $ B(R^kF) $ as the conjugation between $ (R^kF)^{2^{n-k}} $ and $ R^nF $. 
\msk

\begin{lem} \label{exponential smallness of R-n-k}
Let $ F \in \II(\bar \eps) $.  Then both $ R^n_k(y) $ and $ (R^n_k)'(y) $ converges to zero exponentially fast as $ n \ra \infty $ where $ R^n_k(y) $ be non-linear part of $ \pi_z \circ \Psi^n_k $ depending only on the second variable $ y $.
\end{lem}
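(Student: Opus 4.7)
My approach is to establish a clean one-step recursion for $R^n_k$ in the index $n$, observe that the contribution introduced at each new level is super-exponentially small, and conclude by iterating a strict sup-norm contraction on the second derivative.

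Decompose $\Psi^{n+1}_k = \Psi^n_k \circ \widetilde\psi^{n+1}_v$, where $\widetilde\psi^{n+1}_v(w) := \psi^{n+1}_v(w + \tau_{n+1}) - \tau_n$ incorporates the tip translation. Recall that the horizontal-like diffeomorphism satisfies $H_n(x,y,z) = (\,\cdot\,,\, y,\, z - g_n(y))$ with $g_n(y) := \de_n(y, f_n^{-1}(y), 0)$, and $H_n(\tau_n) = \si_n \tau_{n+1}$. A direct calculation then gives
\begin{equation*}
\pi_y \circ \widetilde\psi^{n+1}_v(w) = \si_n\, y, \qquad \pi_z \circ \widetilde\psi^{n+1}_v(w) = \si_n\, z + g_n(\si_n y + \tau_n^y) - g_n(\tau_n^y).
\end{equation*}
Composing with the explicit expression of $\Psi^n_k$, using the exact identity $\si_{n+1,k} = \si_n\, \si_{n,k}$ from Lemma \ref{recursive formula of d, u, and t} and matching coefficients of $y$ and $z$, the normalization $R^{n+1}_k(0) = (R^{n+1}_k)'(0) = 0$ forces $d_{n+1,n} = g_n'(\tau_n^y)$ and yields
\begin{equation*}
R^{n+1}_k(y) = \widetilde R_n(y) + \frac{1}{\si_n}\,R^n_k(\si_n y), \qquad \widetilde R_n(y) := \frac{g_n(\si_n y + \tau_n^y) - g_n(\tau_n^y) - g_n'(\tau_n^y)\,\si_n y}{\si_n},
\end{equation*}
with initial condition $R^k_k \equiv 0$.

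Since $\| g_n \|_{C^3} = O(\bar \eps^{2^n})$ (inherited from $\| \de_n \|_{C^3} = O(\bar \eps^{2^n})$), Taylor's theorem gives $\widetilde R_n(y) = \tfrac{1}{2}\si_n\, g_n''(\xi)\, y^2$, so $\|\widetilde R_n\|_{C^2} = O(\bar \eps^{2^n})$. Differentiating the recursion twice,
\begin{equation*}
(R^{n+1}_k)''(y) = \widetilde R_n''(y) + \si_n\,(R^n_k)''(\si_n y),
\end{equation*}
which is a strict contraction in the sup-norm because $|\si_n| \leq \si < 1$. Iterating from $R^k_k \equiv 0$ and using that $\bar \eps^{2^j}$ is much smaller than any power of $\si^{-1}$ for small $\bar \eps$, we obtain
\begin{equation*}
\|(R^n_k)''\|_\infty \leq C\sum_{j=k}^{n-1} \si^{n-1-j}\,\bar \eps^{2^j} \leq C'\,\si^{n-k}.
\end{equation*}
Finally, since $R^n_k(0) = (R^n_k)'(0) = 0$, integrating twice via the fundamental theorem of calculus produces the same exponential bound $\|(R^n_k)'\|_\infty, \|R^n_k\|_\infty \leq C''\, \si^{n-k}$, proving both converge to zero exponentially fast as $n \to \infty$.

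The main obstacle is the tip bookkeeping in the derivation of the recursion: the identification $d_{n+1,n} = g_n'(\tau_n^y)$ must be exact, not approximate, to preserve $(R^n_k)'(0) = 0$ throughout the induction. This vanishing at the origin is precisely what allows Taylor expansion to extract the extra factor of $\si_n$ that renders the second-derivative recursion contractive; without it the recursion for $R^n_k$ itself would multiply by $1/\si_n > 1$ and fail to contract.
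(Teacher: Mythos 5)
Your proposal is correct and reaches the same one-step recursion $R^{n+1}_k(y) = \widetilde R_n(y) + \tfrac{1}{\si_n}R^n_k(\si_n y)$ (with $\widetilde R_n = R^{n+1}_n$ in the paper's notation) by the same peel-off-the-top-level decomposition of $\Psi^n_k$. Where the two arguments diverge is in how they neutralize the expansive prefactor $1/\si_n$: the paper writes $R^n_k(y) = a_{n,k}y^2 + A_{n,k}(y)y^3$, derives separate affine recursions for the quadratic coefficient $a_{n,k}$ and the remainder $A_{n,k}$ (and again for $A'_{n,k}$ to treat $(R^n_k)'$), and shows each tends to zero; you instead differentiate the recursion twice to get $(R^{n+1}_k)'' = \widetilde R_n'' + \si_n\,(R^n_k)''(\si_n\cdot)$, which is already a strict sup-norm contraction, iterate from $R^k_k\equiv 0$, and integrate back using the exact normalization $R^n_k(0)=(R^n_k)'(0)=0$. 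Your route is cleaner and more conceptual: it makes explicit that the double zero at the origin is precisely what converts the apparent expansion into a contraction, and it handles $R^n_k$ and $(R^n_k)'$ in one stroke rather than via two parallel bookkeeping chains. The one point you should state explicitly but do correctly flag is that $d_{n+1,n}$ must equal $q_n(\pi_y(\tau_n)) = g_n'(\tau_n^y)$ \emph{exactly} (which the paper confirms in the proof of Proposition \ref{estimation of t-n-k by b-1}) so that the vanishing of $(R^n_k)'$ at the origin is preserved level by level; you also correctly note the need for $C^3$ control on $\de_n$ to bound $\|\widetilde R_n\|_{C^2}$.
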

\begin{proof}
Let $ w =(x,y,z) $ be the point in $ B(R^nF) $ and let $ \Psi^n_{n-1}(w) $ be $ w' = (x',y',z') $. Recall $ \Psi^n_k = \Psi^n_{n-1} \circ \Psi^{n-1}_k $. Thus
\begin{equation*}
\begin{aligned}
z' &= \ \pi_z \circ \Psi^n_{n-1}(w) = \si_{n,\,n-1} \,\big[\, d_{n,\,n-1}\, y + z + R^n_{n-1}(y) \,\big] \\
y' &= \ \pi_y \circ \Psi^n_{n-1}(w) = \si_{n,\,n-1}\,y .
\end{aligned}
\end{equation*}
Then by the composition of $ \Psi^{n-1}_k $ and $ \Psi^n_{n-1} $, we obtain the recursive formula of $ \pi_z \circ \Psi^n_k $ as follows  %\msk
% \label{recursive formula of pi-z Psi}
\begin{align*} 
 \pi_z \circ & \Psi^n_k(w) \\[0.2em]  \numberthis \label{recursive formula of pi-z Psi-1}
&= \ \si_{n,\,k}\,\big[\,d_{n,\,k}\,y + z + R^n_k(y) \,\big] 
\\[0.2em]
&= \ \pi_z \circ \Psi^{n-1}_k(w') =  \si_{n-1,\,k}\,\big[\,d_{n-1,\,k}\,y' + z' + R^{n-1}_k(y') \,\big] \\[0.2em]
&= \ \si_{n-1,\,k}\,\big[\,d_{n-1,\,k}\,\si_{n,\,n-1}\,y + \si_{n,\,n-1} \,\big[\, d_{n,\,n-1}\:\! y + z + R^n_{n-1}(y) \,\big] + R^{n-1}_k(\si_{n,\,n-1}\,y) \,\big] \\[0.2em] \numberthis \label{recursive formula of pi-z Psi-2}
&= \ \si_{n,\,k}\,(d_{n-1,\,k} + d_{n,\,n-1}) + \si_{n,\,k}\,z + \si_{n,\,k}\,R^n_{n-1}(y) + \si_{n-1,\,k}\,R^{n-1}_k(\si_{n,\,n-1}\,y) . \\[-1em]
\end{align*} %\msk
By Proposition \ref{recursive formula of d, u, and t}, $ d_{n,\,k} = d_{n-1,\,k} + d_{n,\,n-1} $. Let us compare \eqref{recursive formula of pi-z Psi-1} with \eqref{recursive formula of pi-z Psi-2}. Recall the equation $ \si_{n,\,k} = \si_{n,\,n-1} \cdot \si_{n-1,\,k} $. Then
%\msk
\begin{equation*}
R^n_k(y) = R^n_{n-1}(y) + \frac{1}{\si_{n,\,n-1}}\ R^{n-1}_k(\si_{n,\,n-1}\,y) .
\end{equation*}
\nin Each $ R^i_j(y) $ is the sum of the second and higher order terms of $ \pi_z \circ \Psi^i_j $ for $ i>j $. Thus 
\begin{equation*}
R^n_k(y) =  a_{n,\,k}\,y^2 + A_{n,\,k}(y)\cdot y^3
\end{equation*}

\nin Moreover, $ \|\;\! R^n_{n-1} \| = O(\bar \eps^{2^{n-1}}) $ because $ R^n_{n-1}(y) $ is the second and higher order terms of the map $ \de_{n-1}(\si_{n,\,n-1}\,y,\,f_{n-1}^{-1}(\si_{n,\,n-1}\,y),\,0) $. Then
\begin{equation*}
R^n_k(y) = \frac{1}{\si_{n,\,n-1}}\ R^{n-1}_k(\si_{n,\,n-1}\,y) + c_{n,\,k}\,y^2 + O(\bar \eps^{2^{n-1}}y^3) 
\end{equation*}
where $ c_{n,\,k} = O(\bar \eps^{2^{n-1}}) $. The recursive formula for $ a_{n,\,k} $ and $ A_{n,\,k} $ as follows
\begin{equation*}
R^n_k(y) = \frac{1}{\si_{n,\,n-1}}\ \Big( a_{n-1,\,k}\cdot (\si_{n,\,n-1}\,y)^2 + A_{n-1,\,k}(\si_{n,\,n-1}\,y) \cdot (\si_{n,\,n-1}\,y)^3 \Big) + O(\bar \eps^{2^{n-1}}y^3) .
\end{equation*}
Then \ssk $ a_{n,\,k} = \si_{n,\,n-1}\,a_{n-1,\,k} + c_{n,\,k} $ and $ \| \:\! A_{n,\,k} \| \leq \| \,\si_{n,\,n-1}\|^2 \| \:\! A_{n-1,\,k} \| + O(\bar \eps^{2^{n-1}}) $ and for each fixed $ k<n $,\, $ a_{n,\,k} \ra 0 $ and $ A_{n,\,k} \ra 0 $ exponentially fast as $ n \ra \infty $. \ssk Thus $ R^n_k(y) $ converges to zero as $ n \ra \infty $ exponentially fast. 
%\ssk \\
Let us estimate $ \| \:\! A_{n,\,k}' \| $ in order to measure how fast $ (R^n_k)'(y) $ is convergent. By similar method, we have the recursive formula of $ (R^n_k)'(y) $ as follows \msk
\begin{equation*}
\begin{aligned}
(R^n_k)'(y) &= \ 2\, a_{n,\,k}\,y + 3 \:\! A_{n,\,k}(y)\cdot y^2 + A_{n,\,k}'(y)\cdot y^3 \\[0.3em]
\textrm{Thus} \quad
  (R^n_k)'(y) &= \ (R^n_{n-1})'(y) + R^{n-1}_k(\si_{n,\,n-1}\,y) \\
&= \ R^{n-1}_k(\si_{n,\,n-1}\,y) + 2\, c_{n,\,k}\,y + O(\bar \eps^{2^{n-1}}y^2) .
\end{aligned}
\end{equation*}
Then
\begin{equation*}
\begin{aligned}
(R^n_k)'(y) &= \ 2\, a_{n-1,\,k}\,\si_{n,\,n-1}\,y + 3 \:\! A_{n-1,\,k}(\si_{n,\,n-1}\,y)\cdot (\si_{n,\,n-1}\,y)^2 + A_{n,\,k}'(\si_{n,\,n-1}\,y)\cdot (\si_{n,\,n-1}\,y)^3 \\
 & \quad + 2\, c_{n,\,k}\,y + O(\bar \eps^{2^{n-1}}y^2) .
\end{aligned}
\end{equation*}
Let us compare quadratic and higher order terms of $ (R^n_k)'(y) $
\begin{equation*}
\begin{aligned}
3 \, A_{n,\,k}(y)\cdot y^2 + A_{n,\,k}'(y)\cdot y^3 &= \ 3 \:\! A_{n-1,\,k}(\si_{n,\,n-1}\,y)\cdot (\si_{n,\,n-1}\,y)^2 + A_{n,\,k}'(\si_{n,\,n-1}\,y)\cdot (\si_{n,\,n-1}\,y)^3 \\[0.2em]
 & \qquad + O(\bar \eps^{2^{n-1}}y^2) .
\end{aligned}
\end{equation*}
Thus
\begin{equation*}
\begin{aligned}
A_{n,\,k}'(y)\,y = A_{n,\,k}'(\si_{n,\,n-1}\,y)\cdot \si_{n,\,n-1}^3 \,y - 3 \:\! A_{n,\,k}(y) + 3 \:\! A_{n-1,\,k}(\si_{n,\,n-1}\,y)\cdot \si_{n,\,n-1}^2 + O(\bar \eps^{2^{n-1}}) .
\end{aligned}
\end{equation*}
Then 
\begin{equation*}
\begin{aligned}
\| \:\! A_{n,\,k}' \| &\leq \ \| \:\! A_{n-1,\,k}' \| \cdot \|\:\! \si_{n,\,n-1} \|^3 + 3 \| \:\! A_{n,\,k}\| + 3 \| \,A_{n-1,\,k}\| \cdot \|\, \si_{n,\,n-1} \|^2 + O(\bar \eps^{2^{n-1}}) \\[0.2em]
&\leq \ \| \,A_{n-1,\,k}' \| \cdot \|\, \si_{n,\,n-1} \|^3 + C \|\, \si_{n,\,n-1} \|^2
\end{aligned} \msk
\end{equation*}
for some $ C>0 $. Then $ A_{n,\,k}' \ra 0 $ as $ n \ra \infty $ exponentially fast. Hence, so does $ (R^n_k)'(y) $ exponentially fast.
\end{proof}

\nin Let $ w^1 $ and $ w^2 $ be two points in $ B(R^nF) $ and $ w^j = (x^j,\, y^j,\, z^j) $ for $ j =1,2 $. Let $ \Psi^n_{i,\, {\bf v}}(w^j) = w_i^j $ for $ i \in \N $ and $ j =1,2 $.
\msk

\begin{prop} \label{formal expression of pi-z Psi difference}
Let $ F \in \II(\bar \eps) $. %Suppose that $ B^{n-k}_{{\bf v}v}(R^kF) $ overlaps $ B^{n-k}_{{\bf v}c}(R^kF) $ on the $ x- $axis where the word $ {\bf v} $ as $ {\bf v} = v^{n-k} \in W^{n-k} $. 
Then 
\begin{equation*}
\dot z^1 - \dot z^2 = \pi_z \circ \Psi^n_k(w^1) - \pi_z \circ \Psi^n_k(w^2) =  \si_{n,\,k} \cdot (z^1 - z^2) + \si_{n,\,k} \sum_{i=k}^{n-1} q_i (\si_{n,\,i} \,\bar y) \cdot (y^1 -y^2)
\end{equation*}
where 
$ \bar y $ is in the line segment between $ y_1 $ and $ y_2 $. Moreover,
\begin{equation*}
\sum_{i=k}^{n-1} q_i \circ (\si_{n,\,i} \,\bar y) \cdot (y^1 -y^2) = d_{n,\,k} \cdot (y^1 -y^2) + R^n_k(y^1) - R^n_k(y^2) .
\end{equation*}
\end{prop}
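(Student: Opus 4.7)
The plan is to reduce both assertions to the one-variable mean value theorem applied to the $y$-dependent part of $\pi_z\circ\Psi^n_k$. The explicit expression $\pi_z\circ\Psi^n_k(w) = \sigma_{n,k}[d_{n,k}\,y + z + R^n_k(y)]$ (with the tip centering convention introduced in Section 2) immediately gives
$\dot z^1 - \dot z^2 \;=\; \sigma_{n,k}(z^1 - z^2) + \sigma_{n,k}\bigl[d_{n,k}(y^1 - y^2) + R^n_k(y^1) - R^n_k(y^2)\bigr]$,
so the proposition reduces to identifying the bracket with $\sum_{i=k}^{n-1} q_i(\sigma_{n,i}\bar y)(y^1 - y^2)$ for a single $\bar y$ in the segment between $y^1$ and $y^2$.

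The main step is a structural identity: modulo an additive constant $c$ coming from the tip translations,
$d_{n,k}\,y + R^n_k(y) \;=\; \sum_{i=k}^{n-1} \sigma_{n,i}^{-1}\,\Phi_i(\sigma_{n,i}\,y) + c$,
where $\Phi_i(y) := \delta_i(y, f_i^{-1}(y), 0)$, so that $\Phi_i' = q_i$ by the definition of $q_i$ in \eqref{definition of q(y)}. To prove this I would decompose $\Psi^n_k = \psi^{k+1}_v \circ \Psi^n_{k+1}$ and use the direct computation $\pi_z \circ \psi^{k+1}_v(x,y,z) = \sigma_k z + \Phi_k(\sigma_k y)$ read off from $H_k^{-1}$. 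Substituting the formula for $\pi_z\circ\Psi^n_{k+1}$ produces $\pi_z\circ\Psi^n_k(w) = \sigma_{n,k}[d_{n,k+1}\,y + z + R^n_{k+1}(y)] + \Phi_k(\sigma_{n,k}\,y)$; comparing with the direct expression for $\pi_z\circ\Psi^n_k$ yields the one-step recursion $[d_{n,k}\,y + R^n_k(y)] - [d_{n,k+1}\,y + R^n_{k+1}(y)] = \sigma_{n,k}^{-1}\,\Phi_k(\sigma_{n,k}\,y)$, and iterating from level $k$ up to level $n$ (with base $d_{n,n}=0$, $R^n_n \equiv 0$) gives the identity.

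With the identity in hand, set $F(y) := \sum_{i=k}^{n-1} \sigma_{n,i}^{-1}\,\Phi_i(\sigma_{n,i}\,y)$; then $F'(y) = \sum_{i=k}^{n-1} q_i(\sigma_{n,i}\,y)$. The one-variable mean value theorem applied to $F$ produces a single $\bar y$ in the segment from $y^1$ to $y^2$ with $F(y^1) - F(y^2) = F'(\bar y)(y^1 - y^2)$, which is precisely the second equation of the proposition. Substituting back into the formula of the first paragraph yields the first equation.

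The main subtlety is the bookkeeping of the tip centering: the formula $\pi_z\circ\Psi^n_k(w) = \sigma_{n,k}[d_{n,k}\,y + z + R^n_k(y)]$ is the ``confused'' expression described in Section 2 and carries implicit translations of the input by $\tau_n$ and of the output by $\tau_k$. Fortunately the proposition is about the difference $\dot z^1 - \dot z^2$, so the additive constants produced by these translations cancel automatically, and the identity above is needed only up to such constants.
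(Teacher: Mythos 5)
Your proof is correct and follows essentially the same route as the paper's: both unwind the recursion $\Psi^n_k = \psi^{k+1}_v\circ\Psi^n_{k+1}$ to produce the telescoping sum $\sum_{i=k}^{n-1}\sigma_{n,i}^{-1}\,p_i(\sigma_{n,i}y)$ for the $y$-dependent part of $\pi_z\circ\Psi^n_k$ (up to additive constants that cancel in differences), then compare with the closed form $\sigma_{n,k}[\,d_{n,k}\,y+z+R^n_k(y)\,]$ and apply the mean value theorem. One genuine refinement in your write-up: by applying MVT once to the aggregate function $F(y)=\sum_{i}\sigma_{n,i}^{-1}p_i(\sigma_{n,i}y)$ you obtain a single $\bar y$ in the segment legitimately, whereas the paper applies MVT to each $p_i(\sigma_{n,i}\,\cdot)$ separately — which strictly produces a different $\bar y_i$ for each $i$ — and tacitly identifies them, so your version proves the single-$\bar y$ statement exactly as written.
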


\begin{proof}
Firstly, let us express $ \pi_z \circ \Psi^n_k(w) $. Let $ p_{\,i}(y) $ be $ \de_i (y, f^{-1}_i(y),0) $ in order to simplify expression. %\ssk 
Let $ \Psi^n_{i,\, {\bf v}}(w) = w_i $ for $ k \leq i \leq n-1 $ and let $ w_i = (x_i, y_i, z_i) $ \footnote{For notational compatibility, let $ \Psi^i_i(B) = B $, that is, $ \Psi^i_i = \id $ and let $ \si_{i,\, i} =1 $ for every $ i \in \N $.}. Let $ w = w_n $. Recall $ \pi_z \circ \psi^{i+1}_i (w_{i+1}) = \si_i\, z_{i+1} + p_{\,i} (\si_i\, y_{i+1}) $. Since $ \Psi^n_k = \psi^{k+1}_k \circ \Psi^n_{k+1} $, we estimate $ z_k $ using recursive formula \msk
%\label{pi-z Psi-n,k expression}
\begin{align*}
z_k &= \ \pi_z \circ \Psi^n_k(w) = \pi_z \circ \psi^{k+1}_k(w_{k+1}) \\
&= \ \si_k \cdot z_{k+1} + p_{\,k} (\si_k \cdot y_{k+1}) \\[0.2em]
&= \ \si_k \big[\,\si_{k+1} \cdot z_{k+2} + p_{\,k+1} (\si_{k+1} \cdot y_{k+2})\,\big] + p_{\,k}(\si_k \cdot y_{k+1}) \\[0.2em]
&= \ \si_k \si_{k+1} \cdot z_{k+2} + \si_k \cdot p_{\,k+1} (\si_{k+1} \cdot y_{k+2}) + p_{\,k}(\si_k \cdot y_{k+1}) \\
&\hspace{2in} \vdots \\
&= \ \si_k \si_{k+1} \cdots \si_{n-1} \cdot z + \big[\, \si_k \si_{k+1} \cdots \si_{n-2} \cdot p_{\,n-1}(\si_{n-1} \cdot y) \\
&\qquad + \si_k \si_{k+1} \cdots \si_{n-3} \cdot p_{\,n-2}(\si_{n-2} \cdot y_{n-1}) + \cdots + p_{\,k}(\si_k \cdot y_{k+1}) \,\big] \\[0.6em]
&= \ \si_{n,\, k} \cdot z + \si_{n-1,\, k} \cdot p_{\,n-1}(\si_{n-1} \cdot y) + \si_{n-2,\, k} \cdot p_{\,n-2}(\si_{n-2} \cdot y_{n-1}) + \cdots + p_{\,k}(\si_k \cdot y_{k+1}) \\
&= \ \si_{n,\, k} \cdot z + \sum_{i=k}^{n-1} \si_{i,\, k} \cdot p_{\,i}\:\!(\si_i \cdot y_{i+1})
\end{align*}
where $ \si_{k+1,\, k} = \si_k $. %Moreover, by definition of $ w_i $, $ y_i = \pi_y \circ \Psi^n_i(w) $. 
Moreover, %the second coordinate function of each $ \psi^{i+1}_i(w) $ is just scaling map with $ \si_i $ by the definition, 
$ H_i \circ \La_i(w) = (\phi_i^{-1}(\si_i w),\ \si_i\, y,\ \bullet ) $ \ for each $ k \leq i \leq n-1 $.  %Recall $ y = y_n $. 
Thus
\begin{equation*}
\begin{aligned}
\si_{n,\, i} \cdot y = \si_i \cdot y_{i+1} = y_i %= \pi_y \circ \Psi^n_i(w) .
\end{aligned} \msk
\end{equation*} 
%Then the above equation, \eqref{pi-z Psi-n,k expression} is expressed as follows.
%\begin{equation} \label{pi-z Psi-n,k expression 2}
%\pi_z \circ \Psi^n_k(w) = \si_{n,\, k} \cdot z + \sum_{i=k}^{n-1} \si_{i,\, k} \cdot p_{\,i}\,(%\si_i \cdot
% \pi_y \circ \Psi^n_i(w))
%\end{equation}
%\ssk

\nin Secondly, let us estimate \ssk $ \dot z^1 - \dot z^2 = \pi_z \circ \Psi^n_k(w^1) - \pi_z \circ \Psi^n_k(w^2) $ where $ w^j \in B(R^nF) $ for $ j=1,2 $. Recall the definition of $ q_i(y) $, namely, $ \frac{d}{dy}\,p_{\,i}(y) = q_i(y) $. By the above equation %\eqref{pi-z Psi-n,k expression} 
and mean value theorem, we obtain that \msk
%\label{difference of z-1 and z-2}
\begin{align*} 
\dot z^1 - \dot z^2 &= \ \pi_z \circ \Psi^n_k(w^1) - \pi_z \circ \Psi^n_k(w^2) \\
&= \ \si_{n,\, k} \cdot (z^1 - z^2) + \sum_{i=k}^{n-1} \si_{i,\, k} \cdot \big[\,p_{\,i}\:\!(\si_i \cdot y_{i+1}^1) - p_{\,i}\:\!(\si_i \cdot y_{i+1}^2)\,\big]  \\
&= \ \si_{n,\, k} \cdot (z^1 - z^2) + \sum_{i=k}^{n-1} \si_{i,\, k} \cdot \big[\,p_{\,i}\:\!(\si_{n,\, i} \cdot y^1) - p_{\,i}\:\!(\si_{n,\, i} \cdot y^2)\,\big]  \\
&= \ \si_{n,\, k} \cdot (z^1 - z^2) + \sum_{i=k}^{n-1} \si_{i,\, k} \cdot q_{\,i}\circ (\si_{n,\, i}\cdot \bar y )  \cdot \si_{n,\, i+1} \cdot ( y^1 - y^2 ) \\
&= \ \si_{n,\, k} \cdot (z^1 - z^2) + \si_{n,\, k}  \sum_{i=k}^{n-1} q_{\,i}\circ (\si_{n,\, i}\cdot \bar y )  \cdot ( y^1 - y^2 ) \numberthis \label{difference of z-1 and z-2}
\end{align*}
where $ \bar y $ is in the line segment between $ y^1 $ and $ y^2 $ which is contained in $ \pi_y \circ B(R^nF) $. Moreover, by the expression of $ \Psi^n_k $, 
$$ \pi_z \circ \Psi^n_k(w) = \si_{n,\, k}\:\! \big[\,d_{n,\, k}\, y + z + R^n_k(y) \,\big] . $$
Then
%\label{difference of z-1 and z-2 2}
\begin{align*} 
\dot z^1 - \dot z^2 &= \ \pi_z \circ \Psi^n_k(w^1) - \pi_z \circ \Psi^n_k(w^2) \\[0.2em]
&= \ \si_{n,\, k}\, \big[\,d_{n,\, k}\, (y^1 -y^2) + (z_1 - z_2) + R^n_k(y^1) -  R^n_k(y^2) \,\big] \\[0.2em]
&= \ \si_{n,\, k} \cdot (z^1 - z^2) + \si_{n,\, k} \cdot \big[\,d_{n,\, k}\, (y^1 -y^2) + R^n_k(y^1) -  R^n_k(y^2) \,\big] . \numberthis \label{difference of z-1 and z-2 2}
\end{align*} %\msk
Hence, taking the limit as $ n \ra \infty $, we obtain
$$  \sum_{i=k}^{n-1} q_{\,i}\circ (\si_{n,\, i}\cdot \bar y )  \cdot ( y^1 - y^2 ) = d_{n,\, k}\, (y^1 -y^2) + R^n_k(y^1) -  R^n_k(y^2) $$
of which convergence is exponentially fast.
\end{proof}
\msk

\begin{cor} \label{distortion parts of z-coordinate of Psi}
Let $ F \in \II(\bar \eps) $. Then
\begin{equation*}
\sum_{i=k}^{n-1} q_i \circ (\pi_y \circ \Psi^n_{i,\,{\bf v}}(w)) = d_{n,\,k} + (R^n_k)'(\pi_y (w))
\end{equation*}
for every $ w \in B(R^nF) $ and for each $ k < n $. Moreover,
$$ \lim_{n \ra \infty}\; \sum_{i=k}^{n-1} q_i \circ (\pi_y \circ \Psi^n_{i,\,{\bf v}}(w)) = d_{*,\,k} . $$
\end{cor}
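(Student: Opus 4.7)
The approach is to differentiate the finite-$n$ identity of Proposition \ref{formal expression of pi-z Psi difference} with respect to the $y$-variable. Concretely, I would fix $w = (x, y, z) \in B(R^nF)$ and apply the second displayed equation of that Proposition to the pair of points $w^1 = (x, y^1, z)$ and $w^2 = (x, y^2, z)$ with $y^1 \ne y^2$, obtaining
$$\sum_{i=k}^{n-1} q_i(\si_{n,\,i}\,\bar y)\,(y^1 - y^2) = d_{n,\,k}\,(y^1 - y^2) + R^n_k(y^1) - R^n_k(y^2),$$
where $\bar y$ lies between $y^1$ and $y^2$. After dividing by $y^1 - y^2$ and letting $y^2 \to y^1 = y$, continuity of each $q_i$ sends the left side to $\sum_{i=k}^{n-1} q_i(\si_{n,\,i}\,y)$, while the difference quotient on the right converges to $(R^n_k)'(y)$ by definition of derivative. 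Using the identification $\si_{n,\,i}\,y = \pi_y \circ \Psi^n_{i,\,{\bf v}}(w)$ established in the proof of Proposition \ref{formal expression of pi-z Psi difference}, this yields exactly the finite-$n$ formula claimed in the Corollary.

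For the limit assertion, I would combine two convergence results already available. Lemma \ref{recursive formula of d, u, and t} gives $d_{n,\,k} \to d_{*,\,k}$ super exponentially fast as $n \to \infty$, and Lemma \ref{exponential smallness of R-n-k} gives $(R^n_k)'(y) \to 0$ exponentially fast (uniformly in $y$ in the relevant range). Adding these contributions, the right-hand side $d_{n,\,k} + (R^n_k)'(y)$ converges to $d_{*,\,k}$, which therefore must also be the limit of the sum $\sum_{i=k}^{n-1} q_i \circ (\pi_y \circ \Psi^n_{i,\,{\bf v}}(w))$.

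The only potentially delicate point is the passage from the Proposition's apparent use of a single intermediate $\bar y$ in the sum to a rigorous derivation, since a termwise application of the mean value theorem would ordinarily produce distinct intermediate points $\bar y_i$ for the different $p_{\,i}$'s. However, this does not obstruct the argument: in the limit $y^2 \to y^1$ every such intermediate point collapses to $y$, and the identification of the left side then depends only on continuity of the $q_i$'s. Thus the proof reduces to recognizing the right-hand side of Proposition \ref{formal expression of pi-z Psi difference} as a genuine difference quotient of $R^n_k$ and invoking the two convergence lemmas; no new estimate is required.
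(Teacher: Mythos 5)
Your proof is correct and follows essentially the same route as the paper's: form the difference quotient from Proposition \ref{formal expression of pi-z Psi difference}, pass to the limit $y^2 \to y^1$ to obtain the finite-$n$ identity, then invoke Lemma \ref{recursive formula of d, u, and t} and Lemma \ref{exponential smallness of R-n-k} for the $n \to \infty$ limit. The only cosmetic difference is that the paper forms the difference quotient from the raw differences $p_i(\si_{n,i}y^1) - p_i(\si_{n,i}y^2)$ rather than from the mean-value-theorem form, which sidesteps the single-versus-multiple intermediate point issue you flag; your remark that distinct $\bar y_i$'s all collapse to $y$ in the limit by continuity of the $q_i$'s is a correct way to handle that subtlety.
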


\begin{proof}
Let us compare the equation \eqref{difference of z-1 and z-2} and \eqref{difference of z-1 and z-2 2} below
\begin{align*}
\sum_{i=k}^{n-1} \si_{i,\, k} \cdot \big[\,p_{\,i}\,(\si_{n,\, i} \cdot y^1) - p_{\,i}\,(\si_{n,\, i} \cdot y^2)\,\big] &\ = \ \si_{n,\, k} \cdot \big[\,d_{n,\, k}\, (y^1 -y^2) + R^n_k(y^1) -  R^n_k(y^2) \,\big] \\ 
\sum_{i=k}^{n-1} \si_{i,\ k} \cdot \frac{p_{\,i}\,(\si_{n,\,i} \cdot y^1) - p_{\,i}\,(\si_{n,\,i} \cdot y^2)}{y^1 -y^2} 
 &\ = \ \si_{n,\, k} \cdot \left[\,d_{n,\, k} + \frac{R^n_k(y^1) -  R^n_k(y^2)}{y^1 -y^2}  \,\right] . \numberthis  \label{slope of secant lines}
\end{align*}
%By Proposition \ref{formal expression of pi-z Psi difference} and mean value theorem, we see the equation as follows.
Since both $ y^1 $ and $ y^2 $ are arbitrary, we may choose two points $ y $ and $ y + h $ instead of $ y^1 $ and $ y^2 $. The differentiability of both $ p_i $ and $ R^n_k $ enable us to take the limit of \eqref{slope of secant lines} as $ h \ra 0 $. Then 
\begin{equation*}
\si_{n,\, k} \cdot \sum_{i=k}^{n-1} q_{\,i}\circ (\si_{n,\,i} \cdot y ) =  \si_{n,\, k} \cdot \big[\,d_{n,\, k} + (R^n_k)'(y)\,\big]
\end{equation*}
for every $ y \in \pi_y(B(R^nF)) $. Moreover, $ d_{n,\,k} \ra d_{*,\,k} $ as $ n \ra \infty $ super exponentially fast by Lemma \ref{recursive formula of d, u, and t} and $ (R^n_k)' $ converges to zero exponentially fast by Lemma \ref{exponential smallness of R-n-k}. Hence,
\begin{equation*}
\begin{aligned}
\lim_{n \ra \infty} \;\sum_{i=k}^{n-1} q_i \circ (\pi_y \circ \Psi^n_{i,\,{\bf v}}(w)) &= \ \lim_{n \ra \infty} \big[\,d_{n,\,k} + (R^n_k)'(y)\,\big] \\
 &= \ d_{*,\,k} .
\end{aligned}
\end{equation*}

\end{proof}

\nin Let us collect the estimations of numbers and functions which are used in the following sections. \msk
\begin{enumerate}
\item $ |\:\! t_{k+1,\,k} | $, $ |\:\! u_{k+1,\,k}| $ and $ |\:\! d_{k+1,\,k}| $ are $ O \big(\bar \eps^{2^k} \big) $. \msk
\item $ |\:\! u_{n,\,k}| $ and $ |\:\! d_{n,\,k}| $ are $ O \big(\bar \eps^{2^k} \big) $. \msk
%\item $ |\:\! t_{k+1,\,k} | \asymp b_1^{2^k} $ for $ n > k +A $ by Proposition \ref{estimation of t-n-k by b-1}. \msk
\item $ \si_{n,\,k} = (-\si)^{n-k}(1+O(\rho^k)) $ and $ \alpha_{n,\,k} = \si^{2(n-k)}(1+O(\rho^k)) $ for $ k<n $ and for some $ 0<\rho<1 $. \msk
\item $ \|\:\! R^{k+1}_k \| $ is $ O \big(\bar \eps^{2^k} \big) $. \msk
\item $ \|\:\! R^{n}_k \| $ and $ \|\:\! (R^{n}_k)' \| $ are $ O \big(\si^{n-k} \bar \eps^{2^k} \big) $ by Lemma \ref{exponential smallness of R-n-k}.
\end{enumerate}
%\msk

\bsk

\section{Unbounded geometry on critical Cantor set}

\subsection{Boxing and bounded geometry}
Recall the {\em pieces} $B^n_{\bf w} \equiv B^n_{\bf w}(F) = \Psi^n_{\bf w}(B)$ on the $n^{th}$ level or $n^{th}$ generation %which is defined on the Section \ref{critical cantor set}
. The word, $ {\bf w} = (w_1 \ldots  w_n) \in W^n := \lbrace v, c \rbrace^n  $ has length $ n $. Recall that the map 
$$ {\bf w} = (w_1 \ldots  w_n) \mapsto \sum_{k=0}^{n-1} w_{k+1}2^k $$
is one to one correspondence between words of length $ n $ and the additive group of numbers with base 2 mod $ 2^n $. Let the subset of critical Cantor set on each pieces be $ \OO_{\bf w} \equiv B^n_{\bf w} \cap \OO $. Then by the definition of $ \OO_{\bf w} $, %the Lemma \ref{pieces} and the Corollary \ref{diameter}, 
we have the following facts.\ssk
\begin{enumerate}
\item $$ \OO_F = \bigcup_{ {\bf w} \in W^n}  \OO_{\bf w} $$
\item $ F(B^n_{\bf w}) \subset B^n_{{\bf w}+1} $  for every $ {\bf w} = (w_1 \ldots  w_n) \in W^n $.\msk
\item $ \diam(B^n_{\bf w}) \leq C \si^n $ for some $ C>0 $ depending only on $ B $ and $ \bar \eps $.\ssk
\end{enumerate}
Then we can define boxing of Cantor set. % of $n^{th}$ generation. 
The notations in the following definitions are used in \cite{HLM}. These definitions are adapted to three dimensional maps. 
\msk
\begin{defn} \label{boxing}
Let $ F \in \II(\bar \eps) $. A collection of simply connected sets with interior $ {\bf B}^n = \{ B^n_{\bf w} \Subset \Dom(F) \, | \, {\bf w} \in W^n  \} $ is called {\em boxing}\footnote{Element $ B^n_{\bf w} $ in Definition \ref{boxing} is defined topologically. In other words, this definition does not require that $ B^n_{\bf w} \in {\bf B}^n $ is $ \Psi^n_{\bf w}(B) $.} of $ \OO_F $ if \msk
\begin{enumerate}
\item $ \OO_{\bf w} \Subset B^n_{\bf w} $ for each $ {\bf w} \in W^n $. \msk
\item $ B^n_{\bf w} $ and $ B^n_{\bf w'} $ has disjoint closure if $ {\bf w} \neq {\bf w'}$. \msk
\item $ F(B^n_{\bf w}) \subset B^n_{{\bf w}+1} $  for every $ {\bf w} \in W^n $. \msk
\item Each element of $ {\bf B}^n $ is nested for each $ n $, that is,
$$ B^{n+1}_{{\bf w} \nu} \subset B^n_{\bf w}, \ {\bf w} \in W^n,\ \ \nu \in \{v, c\} . $$
\end{enumerate}
\end{defn} 
\ssk
\nin On the above definition, the elements of boxing are just topological boxes. %However, the geometry of the boxing can depend on not only $ F $ and $ \bar \eps $, but also the boxing itself. %Then we define the {\em canonical boxing}, $ {\bf B}^n_{can} $ which is the set of pieces $B^n_{\bf w} \equiv \Psi^n_{\bf w}(B)$. In the rest of the paper, the boxing means the canonical boxing. 
Denote $ \Dom(F_{2d}) $ by $ B_{2d} $ in order to distinguish the domain of three dimensional H\'enon-like map from that of two dimensional one. %Moreover,  $ {}_{2d}{\bf B}^n_{\xi} = \{ {}_{2d}B^n_{\bf w} \Subset \Dom(F_{2d,\; \xi}) \, | \, {\bf w} \in W^n  \} $ be the boxing of two dimensional maps if $ {}_{2d}{\bf B}^n_{\xi} $ satisfies the above definition of boxing and denote the two dimensional canonical boxing to be $ {}_{2d}{\bf B}^n_{can} $.
%\ssk \\
Let the minimal distance between two boxes $ B_1, B_2 $ be the infimum of distance between all elements of each boxes and express this distance to be $ \dist_{\min}(B_1, B_2) $.
\begin{defn}
The (given) boxing $ {\bf B}^n $ has the {\em bounded geometry} if
\begin{align*}
 \dist_{\min}(B^{n+1}_{{\bf w}v}, B^{n+1}_{{\bf w}c}) &\asymp \diam(B^{n+1}_{{\bf w}\nu}) \quad \text{for} \ \nu \in \{v, c\} \\[0.2em]
 \diam(B^n_{\bf w}) &\asymp \diam(B^{n+1}_{{\bf w}\nu}) \quad \text{for} \ \nu \in \{v, c\}
\end{align*}
for all $ {\bf w} \in W^n $ and for all $ n \geq 0 $. %, where $ C > 1 $ independent of $ n $.
\end{defn}
\nin Moreover, if the boxing has bounded geometry, then we just call $ \OO_F $ has bounded geometry. If any given boxing does not have bounded geometry, then we call $ \OO_F $ has {\em unbounded geometry}.
\msk

\subsection{Horizontal overlap of two adjacent boxes} \label{Horizontal overlap}

The proof of unbounded geometry of the Cantor set requires to compare diameter of boxes and the minimal distance of two adjacent boxes in the boxing. In order to compare these quantities, we would use the maps, $ \Psi^n_k(w) $ and $ F_k(w) $ with the two points $ w_1 = (x_1, y_1, z_1) $ and $ w_2 = (x_2, y_2, z_2) $ in $ F_n(B) $. %the domain of $ F_n(w) $, \ssk namely, $ \Dom(R^nF) $. 
Let us each successive image of $ w_j $ under $ \Psi^n_k(w) $ and $ F_k(w) $ be $ \dot{w_j} $, $ \ddot{w}_j $ and $ \dddot{w}_j $ for $ j=1,2 $.
%The following xy-matrix picture is for mapsto from some points to others
\begin{displaymath}
\xymatrix  
{   { w_j}  \ar@{|->}[r]^ {\Psi^n_k} & \dot{w}_j  \ar@{|->}[r]^{F_k}  &\ddot{w}_j \ar@{|->}[r]^{ \Psi^k_0}     & \dddot{w}_j
   }
\end{displaymath}

%The following mathod requires to use the package mathtools
%$$ w_j \xmapsto{\Psi^n_k} \dot{w_j} \xmapsto{F_k} \ddot{w_j} \xmapsto{\Psi^k_0} \dddot{w_j} $$
\nin For example, $ \dot{w}_j = \Psi^n_k(w_j) $ and $ \dot{w}_j = (\dot x_j, \dot y_j, \dot z_j ) $ for  $ j=1,2 $. Let $ S_1 $ and $ S_2 $ be the (path) connected set on $ \R^3 $. If $ \pi_x(\overline{S_1}) \cap \pi_x(\overline{S_2}) $ contains at least two points, then this intersection is called the {\em $ x- $axis overlap} or {\em horizontal overlap} of $ S_1 $ and $ S_2 $. We say $ S_1 $ {\em overlaps} $ S_2 $ on the $ x- $axis or horizontally. Recall $ \si $ is the linear scaling of $ F_* $, the fixed point of renormalization operator and $ \si_k = \si ( 1 + O(\rho^k)) $ for each $ k \in \N $.
\ssk \\
Recall the map $ \Psi^n_k $ from $ B(R^nF) $ to $ B^{n}_{{\bf v}}(R^kF) $ where $ {\bf v} = v^{n-k} \in W^{n-k} $.
\msk
\begin{equation*}
\begin{aligned}
\Psi^n_k(w) = 
\begin{pmatrix}
1 & t_{n,\,k} & u_{n,\,k} \\[0.2em]
& 1 & \\
& d_{n,\,k} & 1
\end{pmatrix}
\begin{pmatrix}
\alpha_{n,\,k} & & \\
& \si_{n,\,k} & \\
& & \si_{n,\,k}
\end{pmatrix}
\begin{pmatrix}
x + S^n_k(w) \\
y \\[0.2em]
z + R^n_k(y)     
\end{pmatrix} 
\end{aligned} \msk
\end{equation*}
where $ \alpha_{n,\,k} = \si^{2(n-k)}(1 + O(\rho^k)) $ and $ \si_{n,\,k} = (-\si)^{n-k}(1 + O(\rho^k)) $. Thus for any $ w \in B(R^nF) $ we have the following equation
\begin{equation*}
\pi_x \circ \Psi^n_k(w) = \alpha_{n,\,k} (x + S^n_k(w)) + \si_{n,\,k} \big( t_{n,\,k}\,y + u_{n,\,k} (z + R^n_k(y)) \big) .
\end{equation*}

\nin Let us find the sufficient condition of horizontal overlapping. Horizontal overlapping means that there exist two points $ w_1 \in B^1_v(R^nF) $ and $ w_2 \in B^1_c(R^nF) $ satisfying the equation
\begin{equation*}
\pi_x \circ \Psi^n_k(w_1) - \pi_x \circ \Psi^n_k(w_2) = 0 .
\end{equation*}
Equivalently,
\begin{equation} \label{equation of x-axis overlap 1}
\begin{aligned}
&\ \alpha_{n,\,k} \Big[ \big(x_1 + S^n_k(w_1) \big) - \big(x_2 + S^n_k(w_2)\big) \Big] \\
& \qquad 
+ \si_{n,\,k} \Big[\, t_{n,\,k}(y_1 - y_2) + u_{n,\,k} \,\big \{ z_1 -z_2 + R^n_k(y_1) - R^n_k(y_2) \big \} \Big] =0 .
\end{aligned} %\msk
\end{equation} 
\nin Recall that $ x + S^n_k(w) = v_*(x) + O( \bar \eps^{2^k} + \rho^{n-k}) $ for some $ 0 < \rho < 1 $. %with $ C^1 $ convergence. 
Since the universal map $ v_*(x) $ is a diffeomorphism and $ | \:\! x_1 - x_2 | = O(1) $, we have the estimation by mean value theorem
$$ |\, x_1 + S^n_k(w_1) - \big(x_2 + S^n_k(w_2)\big) | = O(1) . $$
Recall $ \tau_i $ the tip of $ F_i $ for $ i \in \N $.
\msk
\begin{prop} \label{estimation of t-n-k by b-1}
Let $ F \in {\NN} \cap \II(\bar \eps) $. Let $ b_1 = b_F/b_2 $ where $ b_F $ is the average Jacobian of $ F $ and $ b_2 $ is the universal number defined in Proposition \ref{asymptotic of di-z de-n in Delta}. Then 
$$ b_1^{2^k} \asymp t_{n,\,k} $$
for every $ k + A < n $ where $ A $ is the big enough number depending only on $ b_1 $ and $ \bar \eps $.
\end{prop}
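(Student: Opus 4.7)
The plan is to expand $t_{n,\,k}$ using the recursive formula from Lemma \ref{recursive formula of d, u, and t},
\begin{equation*}
t_{n,\,k}=\sum_{i=k}^{n-1}\si^{i-k}\,[\,t_{i+1,\,i}+u_{i+1,\,i}\,d_{n,\,i+1}\,]\,(1+O(\rho^k)),
\end{equation*}
isolate the $i=k$ summand, and show that it equals $b_1^{2^k}a(\tau_k^x)(1+O(\rho^k))$ up to super-exponentially small corrections, while the remaining terms are $o(b_1^{2^k})$ once $k$ is large enough. Since the tip $\tau_k$ and critical point $c_{F_k}$ are both characterized by the limits of $\Psi^n_{k,\,\vv}(B)$ and $\Psi^n_{k,\,\cc}(B)$, the hypothesis $F\in\NN$ will apply at these points by passing to the limit in the defining identity of $\NN$.

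For the leading term, the last paragraph of the proof of Lemma \ref{recursive formula of d, u, and t} records $t_{k+1,\,k}-u_{k+1,\,k}\,d_{k+1,\,k}\asymp \di_y\eps_k(\tau_k)$, obtained by reading off the entries of $(D^{k+1}_k)^{-1}=\si_k\,DH_k(\tau_k)$. Since $|u_{k+1,\,k}|,|d_{k+1,\,k}|=O(\bar\eps^{2^k})$ by the summary of estimates at the end of Section 5, the product $u_{k+1,\,k}d_{k+1,\,k}$ is $O(\bar\eps^{2^{k+1}})$, so it suffices to prove $\di_y\eps_k(\tau_k)\asymp b_1^{2^k}$. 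I would invoke Lemma \ref{Another universal number b_1} at $w=\tau_k$ to write
\begin{equation*}
\di_y\eps_k(\tau_k)=\di_z\eps_k(\tau_k)\cdot\frac{\di_y\de_k(\tau_k)}{\di_z\de_k(\tau_k)}+b_1^{2^k}\,a(\tau_k^x)\,(1+O(\rho^k)),
\end{equation*}
and then bound the first term on the right. Because $F_k\in\NN$ and $\tau_k=F_k(c_{F_k})$, the defining equation of $\NN$ evaluated at $c_{F_k}$ yields $\di_y\de_k(\tau_k)/\di_z\de_k(\tau_k)=-\di_x\de_k(c_{F_k})$, which is $O(\bar\eps^{2^k})$ by the a priori estimate $\|\di_x\de_k\|\leq C\bar\eps^{2^k}$ used in the proof of Proposition \ref{recursive formula of di-x de-n}. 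Combined with the standard bound $|\di_z\eps_k(\tau_k)|=O(\bar\eps^{2^k})$ for normalized renormalizations, the subtracted correction is $O(\bar\eps^{2^{k+1}})$, and since the universal function $a$ is positive and bounded away from zero near the tip, one concludes $t_{k+1,\,k}\asymp b_1^{2^k}$ provided $b_1^{2^k}\gg\bar\eps^{2^{k+1}}$, i.e.\ for all $k$ exceeding a threshold $A_0=A_0(b_1,\bar\eps)$.

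The tail $\sum_{i=k+1}^{n-1}\si^{i-k}[t_{i+1,\,i}+u_{i+1,\,i}d_{n,\,i+1}]$ is handled by applying the same identification at every level $i>k$. This gives $t_{i+1,\,i}=O(b_1^{2^i}+\bar\eps^{2^{i+1}})$ and $|u_{i+1,\,i}d_{n,\,i+1}|=O(\bar\eps^{2^{i+1}})$, so the whole tail is dominated by the geometric-type series whose leading term is of order $\si\,(b_1^{2^{k+1}}+\bar\eps^{2^{k+2}})=o(b_1^{2^k})$ once $k\geq A_0$. Therefore $t_{n,\,k}=t_{k+1,\,k}(1+o(1))\asymp b_1^{2^k}$ uniformly in $n>k$, and choosing the constant $A$ in the statement to exceed $A_0(b_1,\bar\eps)$ finishes the argument.

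The main technical obstacle is ruling out that the ``dangerous'' cross-term $\di_z\eps_k(\tau_k)\cdot\di_y\de_k(\tau_k)/\di_z\de_k(\tau_k)$ in Lemma \ref{Another universal number b_1} could cancel the leading $b_1^{2^k}$; a priori both $\di_y\de_k$ and $\di_z\de_k$ are of comparable size, so the ratio need not be small. It is precisely the hypothesis $F\in\NN$, through the identity $\di_y\de_k(\tau_k)+\di_z\de_k(\tau_k)\,\di_x\de_k(c_{F_k})=0$ obtained by passing to the limit in the defining equation of $\NN$ at the pre-tip $c_{F_k}$, that forces this ratio to be controlled by $\di_x\de_k(c_{F_k})$ and hence to be super-exponentially small. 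This is the only place where the structural assumption $F\in\NN$, rather than mere infinite renormalizability, enters the proof in an essential way.
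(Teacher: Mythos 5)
Your argument captures the right structural idea — the identity from $\NN$ at $c_{F_k}$, namely $\di_y\de_k(\tau_k)=-\di_z\de_k(\tau_k)\,\di_x\de_k(c_{F_k})$, is indeed where the hypothesis enters, and $t_{k+1,\,k}-u_{k+1,\,k}\,d_{k+1,\,k}\asymp\di_y\eps_k(\tau_k)$ is the right link to the recursion. However, there is a genuine gap: after substituting into Lemma \ref{Another universal number b_1} you bound the cross-term $\di_z\eps_k(\tau_k)\cdot\di_x\de_k(c_{F_k})$ crudely by $O(\bar\eps^{2^{k+1}})$, which is independent of $n$. For this to be dominated by $b_1^{2^k}$ you need $b_1\gtrsim\bar\eps^2$, and since the ratio $\bar\eps^{2^{k+1}}/b_1^{2^k}=(\bar\eps^2/b_1)^{2^k}$ blows up with $k$ whenever $b_1<\bar\eps^2$, no threshold $A_0(b_1,\bar\eps)$ on $k$ can rescue the estimate in that regime. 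Yet Proposition \ref{estimation of t-n-k by b-1} explicitly allows $b_1<\bar\eps^2$ (this is precisely the case where the paper sets $A=C_1\log_2(\log b_1/\log\bar\eps-1)>0$), and in fact that regime is what the subsequent unbounded-geometry argument with $\si^{n-k}\asymp b_1^{2^k}$ and $k\to\infty$ forces one into. Separately, you read $A$ as a lower bound on $k$, whereas in the statement $A$ bounds $n-k$ from below; these are not interchangeable.

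The paper's proof avoids the obstruction by never collapsing $\di_x\de_k(c_{F_k})$ to a crude bound. It uses Proposition \ref{recursive formula of di-x de-n} to split $\di_x\de_k(c_{F_k})=\sum_{i=k}^{n-1}q_i(\pi_y(\tau_i))+\di_x\de_n(c_{F_n})$, identifies $q_i(\pi_y(\tau_i))=d_{i+1,\,i}$ so that $\sum_{i=k}^{n-1}q_i(\pi_y(\tau_i))=d_{n,\,k}$, and therefore works with the combination
\begin{equation*}
t_{k+1,\,k}+u_{k+1,\,k}\,d_{n,\,k+1}=\frac{\alpha_{k+1,\,k}}{\si_{k+1,\,k}}\Big[\,\di_y\eps_k(\tau_k)+\di_z\eps_k(\tau_k)\sum_{i=k}^{n-1}q_i(\pi_y(\tau_i))\,\Big],
\end{equation*}
rather than with $t_{k+1,\,k}$ alone. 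The right-hand bracket equals $b_1^{2^k}a(\pi_x(\tau_k))(1+O(\rho^k))-\di_z\eps_k(\tau_k)\,\di_x\de_n(c_{F_n})$, so the dangerous part of the cross-term is cancelled exactly by the $u_{k+1,\,k}\,d_{n,\,k+1}$ contribution, and the residual $\di_z\eps_k(\tau_k)\,\di_x\de_n(c_{F_n})=O(\bar\eps^{2^k}\bar\eps^{2^n})$ is the quantity that actually has to be compared with $b_1^{2^k}$. Because this residual decays doubly-exponentially in $n$ with $k$ fixed, the condition $n\geq k+A$ makes it negligible for \emph{every} $b_1$, including $b_1<\bar\eps^2$. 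To repair your argument you would need to reinstate this cancellation — that is, to estimate $t_{k+1,\,k}+u_{k+1,\,k}\,d_{n,\,k+1}$ rather than $t_{k+1,\,k}$ — before dealing with the tail of the sum for $t_{n,\,k}$.
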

\begin{proof}

Applying Lemma \ref{recursive formula of di-x de-n} to $ \de_k $, we see
\begin{equation*}
\di_x \de_n(w) = \di_x \de_k \circ \Psi^n_{k,\,{\bf c}}(w) - \sum_{i=k}^{n-1} \; q_i \circ \big( \pi_x \circ \Psi^n_{i,\,{\bf c}}(w) \big)
\end{equation*}
for $ k < n $. Let us take $ w= c_{F_n} $ which is the critical point of $ F_n $. Then
\begin{equation*}
\begin{aligned}
\di_x \de_n(c_{F_n}) = \di_x \de_k (c_{F_k}) - \sum_{i=k}^{n-1} \; q_i \circ \big( \pi_x (c_{F_i}) \big) %\\[0.2em]
= \di_x \de_k (c_{F_k}) - \sum_{i=k}^{n-1} \; q_i \circ \big( \pi_y (\tau_i) \big)
\end{aligned}
\end{equation*}
where $ c_{F_i} $ is the critical point of $ F_i $ for $ k \leq i \leq n-1 $ because $ F_i(c_{F_i}) = \tau_i $. % $ \pi_y(\tau_i) = \pi_x(c_{F_i}) $. 
Moreover, by Proposition \ref{asymptotic of di-z de-n in Delta}, $ \di_z \de_k = b_2^{2^k}( 1 + O(\rho^n)) $ for some positive $ \rho < 1 $. \ssk The fact that $ F \in {\NN} \cap \II(\bar \eps) $ implies that $ \di_y \de_k(F_k(w)) = - \di_z \de_k(F_k(w)) \cdot \di_x \de_k(w) $. 
Then $ \Jac F_k(\tau_k) $ as follows
\msk
\begin{align*}
\Jac F_k (\tau_k) &= \di_y \eps_k(\tau_k) \cdot \di_z \de_k(\tau_k) - \di_z \eps_k(\tau_k) \cdot \di_y \de_k(\tau_k) \\[0.5em]
&= \big[\, \di_y \eps_k(\tau_k) + \di_z \eps_k(\tau_k) \cdot \di_x \de_k (c_{F_k})\,\big] \cdot \di_z \de_k(\tau_k) \\
&= \big[\, \di_y \eps_k(\tau_k) + \di_z \eps_k(\tau_k) \cdot \sum_{i=k}^{n-1} \; q_i \circ \big( \pi_y (\tau_i) \big)\,\big] \cdot b_2^{2^k}(1 + O(\rho^k)) \\[-0.3em] & \qquad 
+ \di_z \eps_k(\tau_k) \cdot \di_x \de_n(c_{F_n}) \cdot b_2^{2^k}(1 + O(\rho^k)) . \\[-1em]
\end{align*}  %\msk
By universality theorem, $ \Jac F_k (\tau_k) = b_F^{2^k} a(\pi_x(\tau_k))(1 + O(\rho^k)) $ and by the definition of $ b_1 $, we obtain that
\begin{equation} \label{eq-asymptotic of di-y eps-k with b-1}
\begin{aligned}
\di_y \eps_k(\tau_k) + \di_z \eps_k(\tau_k) \cdot \sum_{i=k}^{n-1} \; q_i \circ \big( \pi_y (\tau_i) \big) + \di_z \eps_k(\tau_k) \cdot \di_x \de_n(c_{F_n}) = b_1^{2^k} a(\pi_x(\tau_k))(1 + O(\rho^k)) .
\end{aligned}
\end{equation}

\nin Observe that \ssk $ \| \:\! \di_z \eps_k \cdot \di_x \de_n \| = O(\,\bar \eps^{2^k} \bar \eps^{2^n}) $ and $ k < n $. Let us find the sufficient condition satisfying $ \bar \eps^{2^k} \bar \eps^{2^n} \lesssim b_1^{2^k} $. If $ b_1 \geq \bar \eps^2 $, then $ \bar \eps^{2^k} \bar \eps^{2^n} \leq b_1^{2^k} $ for $ n > k $. Assume that $ b_1 < \bar \eps^2 \ll 1 $. Thus
\begin{equation*}
\begin{aligned}
\bar \eps^{2^n} \bar \eps^{2^k} \lesssim  b_1^{2^k} &\Longleftrightarrow \ ( 2^n + 2^k) \log \bar \eps \ \lesssim \ 2^k \log b_1 \\[0.3em]
&\Longleftrightarrow \ 2^n \geq 2^k \left( \frac{\log b_1}{ \log \bar \eps} - 1 \right) + C_0
\end{aligned}
\end{equation*}
for some positive $ C_0>0 $. %If $ 0 < \bar \eps < b_1 \ll 1 $, then $ \bar \eps^{2^k} \! < b_1^{2^k} $ is always true for every $ k \in \N $. Suppose that $ 0 < b_1 < \bar \eps \ll 1 $. 
Define $ A $ as follows \msk
\begin{equation}
\begin{aligned}
A = \left\{
\begin{array}{cll}
0 & \quad \text{where} & b_1 \geq \bar \eps^2 \\[0.6em]
C_1\, \log_2 \! \left( \dfrac{\log b_1}{ \log \bar \eps} - 1 \right) & \quad \text{where} & b_1 < \bar \eps^2 
\end{array} \right.
\end{aligned} \msk
\end{equation}
Thus if $ n \geq k + A $, then $ \bar \eps^{2^k} \bar \eps^{2^k} \lesssim b_1^{2^k} $. 
Recall that $ H_k^{-1} \circ \La_k^{-1} = \Psi^{k+1}_k $. \ssk Thus let us compare each components of derivatives , $ D(\Psi^{k+1}_k)^{-1}(\tau_k) = (D^{k+1}_k)^{-1} = D(\La_k \circ H_k)(\tau_k) $. Then comparison of the second and the third column of the matrices shows that
\msk
\begin{align*}
\frac{\alpha_{k+1,\,k}}{\si_{k+1,\,k}} \cdot \di_y \eps_k(\tau_k) &= t_{k+1,\,k} - u_{k+1,\,k}\;\! d_{k+1,\,k} \\[0.2em]
\frac{\alpha_{k+1,\,k}}{\si_{k+1,\,k}} \cdot \di_z \eps_k(\tau_k) &= u_{k+1,\,k}, \qquad q_i \circ \big( \pi_y (\tau_i) \big) = d_{i+1,\,i} . \\[-1em]
\end{align*} %\msk
The equation, $ d_{n,\,k+1} = \sum_{i=k+1}^n d_{i+1,\,i} $ holds by Lemma \ref{recursive formula of d, u, and t}. Then \msk
\begin{align*}
t_{k+1,\,k} + u_{k+1,\,k}\cdot d_{n,\,k+1} &= \ t_{k+1,\,k} - u_{k+1,\,k}\cdot d_{k+1,\,k} + u_{k+1,\,k}\cdot d_{n,\,k} \\%[0.2em]
&= \ t_{k+1,\,k} - u_{k+1,\,k}\cdot d_{k+1,\,k} + u_{k+1,\,k} \sum_{i=k}^{n-1} d_{i+1,\,i} \\[-0.4em]
&= \ \frac{\alpha_{k+1,\,k}}{\si_{k+1,\,k}} \,\Big[\, \di_y \eps_k(\tau_k) + \di_z \eps_k(\tau_k) \sum_{i=k}^{n-1} \; q_i \circ \big( \pi_y (\tau_i) \big) \,\Big] \\[0.2em]
&= \ b_1^{2^k} \cdot \big[\,\si \cdot a(\pi_x(\tau_k))(1 + O(\rho^k)) + C_1 \,\big] \\[-1em]
\end{align*} %\msk
for some $ C_1>0 $ where $ n \geq k + A $. Hence, by Lemma \ref{recursive formula of d, u, and t} again, $ t_{n,\,k} $ is as follows
\begin{equation*}
\begin{aligned}
t_{n,\,k} & \asymp \sum_{i=k}^{n-1} \si^{i-k}\,\big[\,t_{i+1,\,i} + u_{i+1,\,i} \cdot d_{n,\,i+1} \,\big](1 + O(\rho^k)) \\[0.2em]
& \asymp t_{k+1,\,k} + u_{k+1,\,k} \cdot d_{n,\,k+1} \\[0.4em]
& \asymp b_1^{2^k} .
\end{aligned}
\end{equation*}

\end{proof}
%\msk

\nin Let us choose two points $ w'_1 $ and $ w'_2 $ in $ F_n(B) $. In particular, we may assume that 
$ w_j \in \OO_{R^nF} $ where $ \pi_z(w_j) =z_j $ for $ j=1,2 $. %Two points $ w_1 $ and $ w_2 $ has their pre-image under $ R^nF $. 
Let $ w'_1 $ and $ w'_2 $ be the pre-image of $ w_1 $ and $ w_2 $ respectively. Then 
\begin{equation} \label{z distance of two points}
\begin{aligned}
| \, z_1 -z_2 | = | \,\de_n(w_1') - \de_n(w_2') | \leq  C\:\!\| D \de_n \| \cdot \| \, w'_1 - w'_2 \| = O(\,\bar \eps^{2^n})
\end{aligned} \msk
\end{equation}
for some $ C>0 $. Thus $ | \:\!z_1 -z_2 | = O(\bar \eps^{2^n}) $.
\msk
\begin{cor} \label{comparison sigma n-k with b-1}
Let $ F \in \NN \cap \II(\bar \eps) $. \ssk Suppose that $ \Psi^n_k(B^{n+1}_v) $ overlaps $ \Psi^n_k(B^{n+1}_c) $ on $ x- $axis. In particular, %\ssk $ \pi_x \circ \Psi^n_k(B^{n+1}_v) \cap \pi_x \circ \Psi^n_k(B^{n+1}_c) $ contains two points $ \Psi^n_k(w_1) $ and $ \Psi^n_k(w_2) $ such that 
$ \pi_x \circ \Psi^n_k (w_1) = \pi_x \circ \Psi^n_k (w_2) $ where $ w_1 \in B^{n+1}_v \cap \OO_{R^nF} $ and $ w_2 \in B^{n+1}_c \cap \OO_{R^nF} $. Suppose also that $ k $ and $ n $ are big enough and $ n \geq k + A $ where $ A $ is the number defined in Proposition \ref{estimation of t-n-k by b-1}. Then 
$$ \si^{n-k} \asymp b_1^{2^k}  $$
for every big enough $ k $.
\end{cor}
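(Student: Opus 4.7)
The plan is to analyze the horizontal-overlap equation \eqref{equation of x-axis overlap 1} and read off its leading balance. The key inputs are the estimates collected at the end of the previous section, namely $\alpha_{n,k}\asymp\si^{2(n-k)}$, $|\si_{n,k}|\asymp\si^{n-k}$, $|u_{n,k}|=O(\bar\eps^{2^k})$, and $\|(R^n_k)'\|=O(\si^{n-k}\bar\eps^{2^k})$, together with $|z_1-z_2|=O(\bar\eps^{2^n})$ from \eqref{z distance of two points} and, crucially, $|t_{n,k}|\asymp b_1^{2^k}$ from Proposition~\ref{estimation of t-n-k by b-1}, which is applicable precisely because we assume $n\geq k+A$.

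First I would dispose of the $u_{n,k}$-bracket in \eqref{equation of x-axis overlap 1}. Using the listed estimates,
\begin{equation*}
|\si_{n,k}\,u_{n,k}|\bigl(|z_1-z_2|+|R^n_k(y_1)-R^n_k(y_2)|\bigr)\;\lesssim\;\si^{n-k}\bar\eps^{2^k}\bigl(\bar\eps^{2^n}+\si^{n-k}\bar\eps^{2^k}\bigr)\;=\;o\bigl(\si^{2(n-k)}\bigr)
\end{equation*}
for large $k$, since both $\bar\eps^{2^n}$ and $\bar\eps^{2^{k+1}}$ are negligible. Absorbing this term, \eqref{equation of x-axis overlap 1} reduces to the leading balance
\begin{equation*}
\alpha_{n,k}\bigl[(x_1+S^n_k(w_1))-(x_2+S^n_k(w_2))\bigr]\;\asymp\;-\si_{n,k}\,t_{n,k}\,(y_1-y_2).
\end{equation*}
Since $x+S^n_k(w)=v_*(x)+O(\bar\eps^{2^k}+\rho^{n-k})$ with $v_*$ a diffeomorphism, and since $w_1\in B^{n+1}_v$ and $w_2\in B^{n+1}_c$ live in disjoint level-one pieces of $R^nF$, both $|x_1+S^n_k(w_1)-x_2-S^n_k(w_2)|$ and $|y_1-y_2|$ are $\asymp 1$. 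Equating magnitudes yields $\si^{2(n-k)}\asymp\si^{n-k}\,b_1^{2^k}$, i.e., $\si^{n-k}\asymp b_1^{2^k}$, as claimed.

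The main obstacle is the uniform lower bound $|y_1-y_2|\gtrsim 1$ (and the corresponding $x$-separation), which is not immediate from the definitions. I would verify it by unwinding the coordinates: since $\psi^{n+1}_v=H_n^{-1}\circ\La_n^{-1}$ preserves the second coordinate, $\pi_y(B^{n+1}_v)$ sits in an $O(\si)$-neighborhood of $0$, whereas $\pi_y(B^{n+1}_c)=\pi_x(B^{n+1}_v)$ clusters near the pre-image of $0$ under $\phi_n\approx f_*$, a point bounded away from $0$. Hence the two $y$-ranges are separated by a universal gap, and an identical argument handles the $x$-separation. Once these separations are in hand, the leading balance above is forced and the corollary follows.
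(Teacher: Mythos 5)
Your argument is correct and follows the paper's own proof: plug the overlap hypothesis $\dot x_1-\dot x_2=0$ into the explicit formula for $\pi_x\circ\Psi^n_k$, invoke $t_{n,k}\asymp b_1^{2^k}$ from Proposition~\ref{estimation of t-n-k by b-1} (applicable since $n\geq k+A$) and the collected end-of-section estimates to absorb the $u_{n,k}$- and $\bar\eps$-corrections as lower order, and equate the two surviving leading terms $\alpha_{n,k}\cdot O(1)$ and $\si_{n,k}\,t_{n,k}\cdot O(1)$ to get $\si^{2(n-k)}\asymp\si^{n-k}b_1^{2^k}$. Your observation that the uniform lower bounds $|y_1-y_2|\gtrsim 1$ and $|x_1-x_2|\gtrsim 1$ (the latter entering through $|v_*(x_1)-v_*(x_2)|\geq C_0>0$) need independent justification is on point --- the paper asserts this without comment --- and the geometric separation of the disjoint level-one pieces $B^{n+1}_v$ and $B^{n+1}_c$ in both coordinate projections is indeed what secures it, though the $x$-projection argument goes through the nonlinear straightening $\phi_n^{-1}$ and so is parallel rather than literally identical to the $y$-projection one.
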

\begin{proof}
Recall the following equation
\msk
\begin{equation*}
\begin{aligned}
\pi_x \circ \Psi^n_k(w) = \alpha_{n,\,k} \big[\,x + S^n_k(w)\,\big] + \si_{n,\,k} \big[\, t_{n,\,k}\,y + u_{n,\,k} (z + R^n_k(y)) \,\big] .
\end{aligned} \msk
\end{equation*}
Recall %the estimation of the non linear part of $ \pi_x \circ \Psi^n_k(w) $. Then 
$ x + S^n_k(w) = v_*(x) + O(\bar \eps^{2^k} + \rho^{n-k}) $ where $ v_*(x) $ is a diffeomorphism and
$$ |\, v_*(x_1) - v_*(x_2) | = |\,v_*'(\bar x)\cdot (x_1 - x_2) | \geq C_0 > 0  $$
where $ \bar x $ is in the line segment between $ x_1 $ and $ x_2 $. Thus 
\msk
\begin{align*}
 \dot x_1 - \dot x_2 %\\
&= \ \alpha_{n,\,k} \big[ \big(x_1 + S^n_k(w_1) \big) - \big(x_2 + S^n_k(w_2)\big) \big] \\
& \qquad 
+ \si_{n,\,k} \big[\, t_{n,\,k}(y_1 - y_2) + u_{n,\,k} \,\big \{ z_1 -z_2 + R^n_k(y_1) - R^n_k(y_2) \big \} \big] \\[0.3em]
&= \ \alpha_{n,\,k} \big[\,v_*'(\bar x)\cdot (x_1 - x_2) + O(\bar \eps^{2^k} + \rho^{n-k}) \big] \\
&\qquad + \si_{n,\,k} \big[\,t_{n,\,k}(y_1 - y_2) + u_{n,\,k} \,\big \{ z_1 -z_2 + (R^n_k)'(\bar y)\cdot (y_1 - y_2) \big \} \big] . \\[-1em]
\end{align*} %\msk

\nin Then by Proposition \ref{estimation of t-n-k by b-1} and the estimations in the end of Section \,\ref{Horizontal overlap}, we obtain that
\msk 
\begin{align*}
\big| \;\! \dot x_1 - \dot x_2 \big| &= \Big|\: C_3\:\! \si^{2(n-k)} + \si^{n-k} \,\big[\, C_4 \;\! b_1^{2^k} + C_5\;\! \bar \eps^{2^k} \big( \,\bar \eps^{2^n} + \si^{n-k} \bar \eps^{2^k} \big) \,\big] \:\!\Big| \\[0.2em]
&= \Big|\:\si^{2(n-k)}\,\big[\,C_3 + C_4\,\bar \eps^{2^{k+1}} \,\big] + \si^{n-k} \,\big[\, C_4 \;\! b_1^{2^k} + C_5\;\! \bar \eps^{2^k} \bar \eps^{2^n} \,\big] \:\!\Big| \\[0.4em]   \numberthis  \label{lower estimate of x dot distance}
&\leq \;C_5\,\si^{2(n-k)} + C_6\,\si^{n-k} \,\big[\, b_1^{2^k} + \bar \eps^{2^k} \bar \eps^{2^n} \,\big] \\[-1em]
\end{align*}  %\msk
for some constants $ C_3 $, $ C_4 $, and $ C_5 $, which do not have to be positive. \ssk Let us take big enough $ n $ such that the condition $ b_1^{2^k} \gtrsim \bar \eps^{2^k} \bar \eps^{2^n} $ is satisfied %. %Then $ A $ is defined as follows 
%$$ n \geq k +A = k + C_6 \max \left\{ \; 0,\ \log_2 \Big(\, \dfrac{\log b_1}{ \log \bar \eps} - 1 \Big) \right\} $$
by Proposition \ref{estimation of t-n-k by b-1}. However, $ \dot x_1 - \dot x_2 = 0 $ is the horizontal overlapping assumption. Hence,
\begin{equation*}
\begin{aligned}
\si^{2(n-k)} \asymp \si^{n-k} \, b_1^{2^k} .
\end{aligned}
\end{equation*}
\end{proof}

\msk

\subsection{Unbounded geometry on the critical Cantor set}

\comm{************
Recall the coordinate change map at the tip, $ D_k = D\Psi_k(\tau_{F_k}) $. Then \ssk
\begin{align*}
D_k = 
\begin{pmatrix}
1 & t_k & u_k \\
& 1 & \\
& d_k & 1
\end{pmatrix}
 \cdot
\begin{pmatrix}
\alpha_k & & \\
& \si_k & \\
& & \si_k
\end{pmatrix} = DH_k^{-1}(\tau_{F_k}) 
\end{align*}

\msk
****************}

Let us assume that the $ x- $axis overlapping of two boxes, $ \Psi^n_{k,\,{\bf v}}(B^{n+1}_v) $ and $ \Psi^n_{k,\,{\bf v}}(B^{n+1}_c) $. Under this assumption, we can measure the upper bound the minimal distances of two adjacent boxes $ \dist_{\min}(B^{n}_{{\bf w}v}, B^{n}_{{\bf w}c}\,) $, where $ B^{n}_{{\bf w}v} $ and $ B^{n}_{{\bf w}c} $ are the image of $ B^{n+1}_v $ and $ B^{n+1}_c $ under $ \Psi^k_{0,\,{\bf v}} \circ F_k \circ \Psi^n_{k,\,{\bf v}} $ respectively. Compare this minimal distance with the lower bound of the diameter of the above boxes. Then Cantor attractor has unbounded geometry. Moreover, this result is only related to the universal constant $ b_1 $ (Theorem \ref{unbounded geometry with b-1 a.e.}).

\msk
\begin{lem} \label{upper bound the distance of boxes}
Let $ F $ be the H\'enon-like diffeomorphism in $ {\NN} \cap \II(\bar \eps) $. Suppose that $ B^{n}_{{\bf v}v}(R^kF) $ overlaps $ B^{n}_{{\bf v}c}(R^kF) $ %with respect to $ \Psi^n_{k,\,{\bf v}}(w_1) $ and $ \Psi^n_{k,\,{\bf v}}(w_2) $ 
on the $ x- $axis where the word $ {\bf v} = v^{n-k} \in W^{n-k} $. Then 
$$ \dist_{\min}(B^{n}_{{\bf w}v}, B^{n}_{{\bf w}c}\,) \leq C \,\Big[\, \si^{2k} \si^{n-k} b_1^{2^k} + \si^{2k} \si^{2(n-k)} \bar \eps^{2^k} + \si^{k} \si^{2(n-k)} b_2^{2^k} \,\Big]  $$
where $ {\bf w} = v^kc\,v^{n-k-1} \in W^n $ for some $ C>0 $ and sufficiently big $ k $.
\end{lem}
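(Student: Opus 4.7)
The plan is to realize $\dist_{\min}(B^n_{{\bf w}v}, B^n_{{\bf w}c})$ as the distance between concrete images of two well-chosen points. I take $w_1 \in B^{n+1}_v \cap \OO_{R^nF}$ and $w_2 \in B^{n+1}_c \cap \OO_{R^nF}$ realizing the horizontal overlap hypothesis, namely $\pi_x \circ \Psi^n_{k,\vv}(w_1) = \pi_x \circ \Psi^n_{k,\vv}(w_2)$, and track the successive images $\dot w_j = \Psi^n_{k,\vv}(w_j)$, $\ddot w_j = F_k(\dot w_j)$, $\dddot w_j = \Psi^k_{0,\vv}(\ddot w_j)$, so that $\dddot w_j$ lie in $B^n_{{\bf w}v}$ and $B^n_{{\bf w}c}$ respectively. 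Since both $w_j$ lie in the Cantor set, equation \eqref{z distance of two points} gives $|z_1 - z_2| = O(\bar\eps^{2^n})$, while $|y_1 - y_2| = O(1)$.

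At the inner level, $\dot x_1 - \dot x_2 = 0$ by construction and $\dot y_1 - \dot y_2 = \sigma_{n,k}(y_1 - y_2)$; Proposition \ref{formal expression of pi-z Psi difference} gives
\[
\dot z_1 - \dot z_2 = \sigma_{n,k}\bigl[(z_1 - z_2) + d_{n,k}(y_1 - y_2) + R^n_k(y_1) - R^n_k(y_2)\bigr],
\]
which together with the size estimates listed at the end of Section 5 yields $|\dot z_1 - \dot z_2| \lesssim \sigma^{n-k}\bar\eps^{2^k}$. Applying $F_k$ next, the equality $\dot x_1 = \dot x_2$ produces the key cancellation $f_k(\dot x_1) - f_k(\dot x_2) = 0$, so $\ddot y_1 - \ddot y_2 = 0$ and $\ddot x_1 - \ddot x_2 = -[\eps_k(\dot w_1) - \eps_k(\dot w_2)]$. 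Expanding by the mean value theorem and regrouping the $(y_1 - y_2)$-pieces so that the combination $\di_y\eps_k + \di_z\eps_k \cdot d_{n,k}$ appears, the defining property of $\NN$ enters via Lemma \ref{di-y eps and q asymptotic from n to k} (with Corollary \ref{distortion parts of z-coordinate of Psi} used to rewrite $\sum q_i = d_{n,k} + (R^n_k)'$) to give $|\ddot x_1 - \ddot x_2| \lesssim \sigma^{n-k}b_1^{2^k} + \sigma^{2(n-k)}\bar\eps^{2^k}$. The parallel expansion of $\ddot z_1 - \ddot z_2 = \de_k(\dot w_1) - \de_k(\dot w_2)$ uses the boxed bound of Lemma \ref{asymptotic of di-y de-n in Delta} together with $|\di_z\de_k| \asymp b_2^{2^k}$ from Proposition \ref{asymptotic of di-z de-n in Delta} to deliver $|\ddot z_1 - \ddot z_2| \lesssim \sigma^{2(n-k)}b_2^{2^k}$.

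Finally, the matrix form of $\Psi^k_{0,\vv}$ together with $\ddot y_1 - \ddot y_2 = 0$ kills every $d_{k,0}$- and $R^k_0$-contribution and collapses the result to
\[
\dddot x_1 - \dddot x_2 = \alpha_{k,0}\bigl[(\ddot x_1 - \ddot x_2) + S^k_0(\ddot w_1) - S^k_0(\ddot w_2)\bigr] + \sigma_{k,0}u_{k,0}(\ddot z_1 - \ddot z_2),\quad \dddot z_1 - \dddot z_2 = \sigma_{k,0}(\ddot z_1 - \ddot z_2).
\]
Substituting $\alpha_{k,0} \asymp \sigma^{2k}$ and $|\sigma_{k,0}| \asymp \sigma^k$, the bounds from the previous paragraph produce exactly the three advertised terms $\sigma^{2k}\sigma^{n-k}b_1^{2^k}$, $\sigma^{2k}\sigma^{2(n-k)}\bar\eps^{2^k}$, and $\sigma^k\sigma^{2(n-k)}b_2^{2^k}$; the auxiliary $S^k_0$- and $u_{k,0}$-contributions are dominated by the third term since $\|DS^k_0\| = O(1)$ and $|u_{k,0}| = O(\bar\eps)$. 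The principal obstacle will be the bookkeeping in the middle step: the naive bounds $|\di_y\eps_k|, |\di_y\de_k| = O(\bar\eps^{2^k})$ are too weak, and one must arrange the $(y_1 - y_2)$-pieces so that the $\NN$-refined combinations of Lemmas \ref{di-y eps and q asymptotic from n to k} and \ref{asymptotic of di-y de-n in Delta} yield the decisive $b_1^{2^k}$ and $b_2^{2^k}$ gains rather than a coarser $\bar\eps^{2^k}$ bound.
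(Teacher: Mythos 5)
Your proof takes essentially the same route as the paper: you track $w_j \mapsto \dot w_j \mapsto \ddot w_j \mapsto \dddot w_j$ through $\Psi^n_{k,\vv}$, $F_k$, $\Psi^k_{0,\vv}$, exploit $\dot x_1 = \dot x_2$ to kill the $f_k$ difference and to make $\ddot y_1 = \ddot y_2$, use Proposition~\ref{formal expression of pi-z Psi difference} to rewrite $\dot z_1 - \dot z_2$ in terms of the $q_i$-sum, and then invoke Lemma~\ref{di-y eps and q asymptotic from n to k} and Lemma~\ref{asymptotic of di-y de-n in Delta} to convert the naive $\bar\eps^{2^k}$ bounds on $\di_y\eps_k$, $\di_y\de_k$ into the refined $b_1^{2^k}$, $b_2^{2^k}$ gains, exactly as the paper does in obtaining its estimates \eqref{ddot x distance estimation} and \eqref{ddot z distance estimation}. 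Your final matrix computation of $\dddot x_1 - \dddot x_2$ and $\dddot z_1 - \dddot z_2$ and the resulting three-term bound match the paper's argument.
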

\begin{proof}
Recall the map $ \Psi^n_k $ from $ B(R^nF) $ to $ B^{n-k}_{{\bf v}}(R^kF) $ 
\msk
\begin{equation*}
\begin{aligned}
\Psi^n_k(w) = 
\begin{pmatrix}
1 & t_{n,\,k} & u_{n,\,k} \\[0.2em]
& 1 & \\
& d_{n,\,k} & 1
\end{pmatrix}
\begin{pmatrix}
\alpha_{n,\,k} & & \\
& \si_{n,\,k} & \\
& & \si_{n,\,k}
\end{pmatrix}
\begin{pmatrix}
x + S^n_k(w) \\
y \\[0.2em]
z + R^n_k(y)
\end{pmatrix} .
\end{aligned} \msk
\end{equation*}
where $ {\bf v} = v^{n-k} \in W^{n-k} $. Let us choose two different points as follows
$$ w_1 = (x_1,\,y_1,\,z_1) \in B^1_v(R^nF) \cap \OO_{R^nF}, \quad w_2 = (x_2,\,y_2,\,z_2) \in B^1_c(R^nF) \cap \OO_{R^nF} . $$
Then by the above expression of $ \Psi^n_k $ and overlapping on the $ x- $axis, we may assume the following equations
%\ssk
\begin{align*}
\dot x_1 - \dot x_2 &= \ 0 \\
\dot y_1 - \dot y_2  &= \ \si_{n,\,k} (y_1 -y_2) \\
\dot z_1  - \dot z_2  &= \ \si_{n,\,k} \big[\, d_{n,\,k}\,(y_1 -y_2) + z_1 -z_2 + R^n_k(y_1) - R^n_k(y_2)\, \big] . \\[-1em]
\end{align*} %\msk
\nin Moreover, definitions of $ F_k $ and $ \Psi^k_{0,\,{\bf v}} $ implies that 
$$ \dddot y_1 - \dddot y_2 = \si_{k,\,0}\cdot (\ddot y_1 - \ddot y_2) = \si_{k,\,0}\cdot(\dot x_1 - \dot x_2) =0 . $$
By mean value theorem and the fact that $ (\ddot x_j,\, \ddot y_j,\, \ddot z_j ) = R^kF (\dot x_j,\, \dot y_j,\, \dot z_j ) $ for $ j =1,2 $, we obtain the following equations \msk
\begin{align*}
% &\quad \ \ 
\ddot{x}_1 - \ddot{x}_2 
% \\[0.2em]
&= \ f_k(\dot x_1) - \eps_k(\dot w_1) - \big[\,f_k(\dot x_2) - \eps_k(\dot w_2)\,\big] \\[0.4em]
&= \ - \eps_k(\dot w_1) + \eps_k(\dot w_2)\\[0.4em]
&= \ - \di_y \eps_k(\eta) \cdot (\dot y_1 - \dot y_2 ) - \di_z \eps_k(\eta) \cdot (\dot z_1 - \dot z_2 ) \\[0.4em]
&= \ - \di_y \eps_k(\eta)  \cdot  \si_{n,\,k} (y_1 -y_2) 
 \\ & \qquad \qquad
- \di_z \eps_k(\eta) \cdot \si_{n,\,k} \big[\, d_{n,\,k}(y_1 -y_2) + z_1 -z_2 + R^n_k(y_1) - R^n_k(y_2)\, \big] \\%[0.2em]
%&= \ -\si_{n,\,k}  \big[\, \di_y \eps_k(\eta)  \cdot  \si_{n,\,k} (y_1 -y_2) + \di_z \eps_k(\eta) \cdot \big\{\, d_{n,\,k}(y_1 -y_2) + R^n_k(y_1) - R^n_k(y_2)\,\big\}\, \big] \\
%& \qquad -\si_{n,\,k} \cdot \di_z \eps_k(\eta) \cdot (z_1 -z_2) \\[0.2em]
%
(*)\ &= \ -  \di_y \eps_k(\eta)  \cdot  \si_{n,\,k} (y_1 -y_2)  - \di_z \eps_k(\eta) \cdot \si_{n,\,k} \ \sum_{i=k}^{n-1} q_i \circ (\si_{n,\,i}\, \bar y) \cdot (y_1 -y_2) \\[-0.6em]
&\qquad \qquad - \di_z \eps_k(\eta) \cdot \si_{n,\,k} (z_1 -z_2) \\
&= \ - \Big[\, \di_y \eps_k(\eta) + \di_z \eps_k(\eta) \cdot \sum_{i=k}^{n-1} q_i \circ (\si_{n,\,i}\, \bar y) \,\Big] \cdot \si_{n,\,k} \, (y_1 -y_2) - \di_z \eps_k(\eta) \cdot \si_{n,\,k} \, (z_1 -z_2)        \numberthis        \label{distance of ddot x-1 and ddot x-2}            %\\
\end{align*} %\msk
where $ \eta $ is some point in the line segment between $ \dot{w}_1 $ and $ \dot{w}_2 $ in $ \Psi^n_k(B) $ and $ \bar y $ is in the line segment between $ y_1 $ and $ y_2 $. The second last equation $ (*) $ is involved with Proposition \ref{formal expression of pi-z Psi difference}. Recall that $ | \;\! y_1 - y_2 | \asymp 1 $ and $ | \;\! z_1 - z_2 | = O\big(\bar \eps^{2^n}\big) $ because every point in the critical Cantor set, $ \OO_{F_n} $ has its inverse image under $ F_n $. Thus by Lemma \ref{di-y eps and q asymptotic from n to k}, we obtain that
\begin{equation} \label{ddot x distance estimation}
\begin{aligned}
| \, \ddot x_1 - \ddot x_2 | \leq C_1\, \si^{n-k}\big[\, b_1^{2^k} + \bar \eps^{2^k} \si^{n-k} + \bar \eps^{2^k}\bar \eps^{2^n} \,\big] .
\end{aligned}\msk
\end{equation}
Similarly, we have 
\msk
\begin{align*}
%\ddot{y}_1 - \ddot{y}_2 &= \ 0 \\[0.4em]
% &\quad \ \ 
\ddot{z}_1 - \ddot{z}_2 
% \\[0.2em]
&= \ \de_k(\dot{w}_1) - \de_k(\dot{w}_{\,2}) \\[0.3em]
&= \ \di_y \de_k(\zeta) \cdot (\dot y_1 - \dot y_2 ) + \di_z \de_k(\zeta) \cdot (\dot z_1 - \dot z_2 ) \\[0.4em] \numberthis   \label{distance of ddot z-1 and ddot z-2}
&= \ \di_y \de_k(\zeta) \cdot \si_{n,\,k} (y_1 -y_2) \\[0.2em] & \qquad 
+ \di_z \de_k(\zeta) \cdot \si_{n,\,k} \big[\, d_{n,\,k}(y_1 -y_2) + z_1 -z_2 + R^n_k(y_1) - R^n_k(y_2)\, \big] \\
&= \ \di_y \de_k(\zeta) \cdot \si_{n,\,k} \, (y_1 -y_2) + \di_z \de_k(\zeta) \cdot \si_{n,\,k} \; \sum_{i=k}^{n-1} q_i \circ (\si_{n,\,i}\, \bar y) \cdot (y_1 -y_2) \\[-0.5em]
&\qquad + \di_z \de_k(\zeta) \cdot \si_{n,\,k} \, (z_1 -z_2) \\
&= \ \Big[\, \di_y \de_k(\zeta) + \di_z \de_k(\zeta) \; \sum_{i=k}^{n-1} q_i \circ (\si_{n,\,i}\, \bar y) \,\Big] \cdot \si_{n,\,k} \, (y_1 -y_2) + \di_z \de_k(\zeta) \cdot \si_{n,\,k} \, (z_1 -z_2) 
\end{align*}
where $ \zeta $ is some point in the line segment between $ \dot{w}_1 $ and $ \dot{w}_2 $ in $ \Psi^n_k(B) $. By Lemma \ref{asymptotic of di-y de-n in Delta}, the upper bounds of $ | \:\!\ddot z_1 - \ddot z_2 | $ is 
%\msk
\begin{equation} \label{ddot z distance estimation}
\begin{aligned}
|\:\! \ddot z_1 - \ddot z_2 | \leq C_2\, \si^{n-k} \big[\, \si^{n-k} b_2^{2^k} + b_2^{2^k} \bar \eps^{2^n} \,\big] . 
\end{aligned}  %\msk
\end{equation}
Recall 
\begin{equation*}
\begin{aligned}
\pi_x \circ \Psi^n_k(w) = \alpha_{n,\,k} \big[\,x + S^n_k(w) \,\big] + \si_{n,\,k} \big[\, t_{n,\,k}\,y + u_{n,\,k} (z + R^n_k(y)) \,\big] .
\end{aligned} \msk
\end{equation*}
Then the fact that $ \ddot y_1 - \ddot y_2 = 0 $ implies that
\msk
\begin{align*}
\dddot x_1 - \dddot x_2 &= \ \pi_x \circ \Psi^k_0(\ddot w_1) - \pi_x \circ \Psi^k_0(\ddot w_2) \\[0.3em]
&= \ \alpha_{k,\,0} \big[\,(\ddot x_1 + S^k_0( \ddot w_1)) - (\ddot x_2 + S^k_0( \ddot w_2)) \,\big] \\
&\qquad + \si_{k,\,0}\, \big[\, t_{k,\,0}\,(\ddot y_1 - \ddot y_2 ) + u_{k,\,0} \big( \ddot z_1 -\ddot z_2 + R^k_0( \ddot y_1) - R^k_0( \ddot y_2) \big)\, \big] \\[0.3em]
&= \ \alpha_{k,\,0} \big[\, v_*'(\bar x) + O(\bar \eps + \rho^k) \,\big](\ddot x_1 - \ddot x_2) + \si_{k,\,0} \cdot u_{k,\,0}\,( \ddot z_1 -\ddot z_2) \\[-1em]
\end{align*}  %\msk
where $ \bar x $ is some point in the line segment between $ \ddot x_1 $ and $ \ddot x_2 $. Moreover,
%\begin{equation*}
%\dddot y_1 - \dddot y_2 = \ \si_{k,\,0}\,(\ddot y_1 - \ddot y_2 ) = 0 \phantom{************************\:\!}
%\end{equation*}
\msk
\begin{align*}
\dddot z_1 - \dddot z_2 &= \ \pi_z \circ \Psi^k_0(\ddot w_1) - \pi_z \circ \Psi^k_0(\ddot w_2) \\[0.2em]
&= \ \si_{k,\,0}\,(\ddot z_1 - \ddot z_2 ) +  \si_{k,\,0} \big[\, d_{k,\,0}(\ddot y_1 - \ddot y_2) + R^n_k(\ddot y_1) - R^n_k(\ddot y_2)\, \big] \phantom{**\;\;} 
\\[0.3em]
&= \ \si_{k,\,0}\,(\ddot z_1 - \ddot z_2 ) . \\[-1em]
\end{align*}  %\msk
Let us apply the estimations in \eqref{ddot x distance estimation} and \eqref{ddot z distance estimation} to $ \dddot x_1 - \dddot x_2 $ and $ \dddot z_1 - \dddot z_2 $. Then the minimal distance is bounded above as follows
%Let us assume that both $ k $ and $ n $ are even numbers.
\msk
\begin{align*}
\dist_{\min}(B^{n}_{{\bf w}v}, B^{n}_{{\bf w}c}\,) &\leq \ | \, \dddot x_1 - \dddot x_2 | + | \,\dddot z_1 - \dddot z_2 | \\[0.2em]
&\leq \ \big[\,\si^{2k} \cdot | \,\ddot x_1 - \ddot x_2 | \cdot v_*(\bar x) + \si^k \cdot (1 + u_{k,\,0})\,| \,\ddot z_1 -\ddot z_2 |\,\big] (1 + O(\rho^k)) \\[0.2em]
&\leq \ C_3 \, \si^{2k} \si^{n-k}\,\big[\, b_1^{2^k} + \bar \eps^{2^k}\si^{n-k} + \bar \eps^{2^k} \bar \eps^{2^n} \,\big] + C_4 \, \si^k \si^{n-k} \,\big[ \si^{n-k} b_2^{2^k} + b_2^{2^k} \bar \eps^{2^n} \,\big] \\[0.2em]    \numberthis   \label{intermediate minimal distance between regions}
&\leq \ C_3\,\Big[\, \si^{2k} \si^{n-k} b_1^{2^k} + \si^{2k} \si^{2(n-k)} \bar \eps^{2^k} \,\Big]+ C_4 \, \si^k \si^{2(n-k)} b_2^{2^k} \\[-1em]
\end{align*}  %\msk
for some positive numbers $ C_3 $ and $ C_4 $. Hence, the estimation \eqref{intermediate minimal distance between regions} is refined as follows
\ssk
\begin{equation} \label{minimal distance between domains}
\begin{aligned}
\dist_{\min}(B^{n}_{{\bf w}v}, B^{n}_{{\bf w}c}\,) &\leq \ | \, \dddot x_1 - \dddot x_2 | + | \,\dddot z_1 - \dddot z_2 | \\
&\leq \ C \big[\, \si^{2k} \si^{n-k} b_1^{2^k} + \si^{2k} \si^{2(n-k)} \bar \eps^{2^k} + \si^{k} \si^{2(n-k)} b_2^{2^k} \,\big] 
\end{aligned}
\end{equation}
for some $ C>0 $.
\end{proof}
\msk

\begin{lem} \label{lower bounds of the diameter of boxes}
Let $ F \in {\NN} \cap \II(\bar \eps) $. Then 
$$  \diam (B^{n}_{{\bf w}v}) \geq | \, C_1 \, \si^k \si^{2(n-k)} - C_2\, \si^k \si^{n-k}b_1^{2^k} | $$
\msk
where $ {\bf w} = v^kc\,v^{n-k-1} \in W^n $ and $ n $ is big enough satisfying $ n \geq k + A $ for $ A $ defined in Proposition \ref{estimation of t-n-k by b-1}. 
\end{lem}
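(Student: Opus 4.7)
The plan is to mirror the structure of Lemma \ref{upper bound the distance of boxes} with a role reversal: both sample points will be drawn from the single piece $B^1_v(R^nF)\cap \OO_{R^nF}$, and chosen so that their first coordinates are as far apart as the piece allows, i.e.\ $|x_1-x_2|\asymp \si$. Such a choice is possible because $\pi_x(B^1_v(R^nF))$ is an interval of length comparable to $\si_n\asymp\si$ and the Cantor set meets both ends of this interval (through its own two sub-pieces). Both points will then be transported by the composition $\dddot w_j=\Psi^k_{0,\vv}\circ F_k\circ\Psi^n_{k,\vv}(w_j)$ already analyzed in the upper bound argument, so that $\dddot w_1,\dddot w_2\in B^n_{{\bf w}v}$ and hence $\diam(B^n_{{\bf w}v})\geq |\dddot y_1-\dddot y_2|$.

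The key reduction is
$$
\dddot y_1-\dddot y_2 \;=\; \si_{k,0}(\ddot y_1-\ddot y_2)\;=\;\si_{k,0}(\dot x_1-\dot x_2),
$$
coming from the H\'enon-like form of $F_k$ (whose second coordinate is the first) together with the block structure of $\Psi^k_0$. So it suffices to bound $|\dot x_1-\dot x_2|$ from below. Expanding via the formula for $\Psi^n_k$ and the mean value theorem, exactly as in the upper bound proof,
\begin{align*}
\dot x_1-\dot x_2 \;=\; & \alpha_{n,k}\bigl[\,v_*'(\bar x)(x_1-x_2)+O(\bar\eps^{2^k}+\rho^{n-k})\bigr]\\
& + \si_{n,k}\bigl[\,t_{n,k}(y_1-y_2)+u_{n,k}\{(z_1-z_2)+(R^n_k)'(\bar y)(y_1-y_2)\}\bigr].
\end{align*}
By the choice of $w_1,w_2$ and the uniform lower bound on $|v_*'|$, the first bracketed term has magnitude $\asymp \si\cdot \si^{2(n-k)}$. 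Proposition \ref{estimation of t-n-k by b-1} together with $|y_1-y_2|\asymp\si$ gives $|\si_{n,k}\,t_{n,k}(y_1-y_2)|\asymp \si\cdot \si^{n-k}b_1^{2^k}$. All remaining summands are strictly subdominant: $|z_1-z_2|=O(\bar\eps^{2^n})$ because $w_1,w_2\in\OO_{R^nF}$ (cf.\ \eqref{z distance of two points}), $\|(R^n_k)'\|=O(\si^{n-k}\bar\eps^{2^k})$ by Lemma \ref{exponential smallness of R-n-k}, and $|u_{n,k}|,|d_{n,k}|=O(\bar\eps^{2^k})$ from the tabulated estimates closing the previous section.

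The reverse triangle inequality then yields
$$
|\dot x_1-\dot x_2|\;\geq\;\bigl|\,C_1'\,\si\cdot\si^{2(n-k)} \,-\, C_2'\,\si\cdot\si^{n-k}b_1^{2^k}\,\bigr|,
$$
and multiplying by $|\si_{k,0}|\asymp\si^k$ and absorbing the fixed scalar $\si$ into the constants delivers the announced bound.

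The hardest step will be verifying that \emph{every} error term is strictly dominated by \emph{both} competing main contributions simultaneously, so that the reverse triangle inequality actually leaves the stated absolute-value difference, regardless of whether $b_1$ is much smaller than, comparable to, or larger than $\si^{n-k}$. This is precisely where the hypothesis $n\geq k+A$ from Proposition \ref{estimation of t-n-k by b-1} is essential: it is built to guarantee $\bar\eps^{2^k}\bar\eps^{2^n}\lesssim b_1^{2^k}$, exactly what is needed to absorb the $u_{n,k}(z_1-z_2)$ contribution, and combined with the Lemma \ref{exponential smallness of R-n-k} bound on $(R^n_k)'$ it controls the remaining errors uniformly across all regimes of $b_1$.
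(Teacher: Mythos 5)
Your proposal matches the paper's own proof: both pick $w_1,w_2\in B^1_v(R^nF)\cap\OO_{R^nF}$ with $|x_1-x_2|\asymp 1$ (the paper normalizes $\diam(B^1_v)\asymp 1$, but since $\si$ is a universal constant this is the same as your $\asymp\si$ up to the fixed scalar you absorb at the end), reduce via $\dddot y_1-\dddot y_2=\si_{k,0}(\ddot y_1-\ddot y_2)=\si_{k,0}(\dot x_1-\dot x_2)$, expand $\dot x_1-\dot x_2$ with the $\Psi^n_k$ decomposition and mean value theorem, and invoke Proposition~\ref{estimation of t-n-k by b-1} together with the tabulated smallness of $u_{n,k}$, $(R^n_k)'$, and $|z_1-z_2|$ under $n\geq k+A$. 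This is the same argument as in the paper.
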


\begin{proof}
Let us choose two points 
$$ w_j = (x_j,\,y_j,\,z_j) \in B^1_v(R^nF) \cap \OO_{R^nF} $$
for $ j =1,2 $ % and $ \nu = c \ \text{or} \ v $. 
satisfying $ | \:\! x_1 - x_2| \asymp 1 $ and $ | \:\! y_1 - y_2| = O(1) $. \ssk Thus we may assume that $ | \:\! z_1 - z_2| = O(\bar \eps^{2^n}) $ by the equation \eqref{z distance of two points}. \ssk Recall that the box, $ B^{n}_{{\bf w}v} $ is $ \Psi^n_{0,\,{\bf w}}(B^1_v(R^nF)) $ and that the diameter of the box $ B^{n}_{{\bf w}v} $ is greater than the distance between any two points in $ B^{n}_{{\bf w}v} $. Let $ \dddot w_j = \Psi^k_{0,\,{\bf v}} \circ F_k \circ \Psi^n_{k,\,{\bf v}}(w_j) $, $ \Psi^k_{0,\,{\bf v}}(w_j) = \dot w_j $, and $ F_k(\dot w_j) = \ddot w_j $ for $ j =1,2 $. %\ssk \\
%Recall the definition of diameter. For example,
Then
%\msk
\begin{equation*}
\begin{aligned}
\diam (B^{1}_{v}) = \sup\; \{\,| \;\! w_1 -  w_2 | \;\big| \ w_1,\;  w_2 \in B^{1}_{v} \,\} \asymp 1 . %\\[0.2em]
\end{aligned} %\msk
\end{equation*} 
We may assume that $ |\;\! x_1 - x_2 | \asymp 1 $ and $ |\;\! y_1 - y_2 | \asymp 1 $ by the appropriate choice of $ w_1 $ and $ w_2 $. The definition of $ F_k $ and $ \Psi^k_0 $ implies that
\msk
\begin{align*}
\diam (B^{n}_{{\bf w}v}) & \geq | \;\!\dddot w_1 - \dddot w_2 |\geq | \;\!\dddot y_1 - \dddot y_2 | \\[0.2em]
&= | \,\si_{k,\,0}\, (\ddot y_1 - \ddot y_2 )| \\[0.2em]
&= | \,\si_{k,\,0}\, (\dot x_1 - \dot x_2 ) | \\[0.2em]
&= \big| \, \si_{k,\,0} \,\big[\,\pi_x \circ \Psi^n_k(w_1) - \pi_x \circ \Psi^n_k(w_2) \,\big] \:\!\big| \\[-1em]
\end{align*} %\msk
for any two points $ \dddot w_1,\, \dddot w_2 \in B^{n}_{{\bf w}v} $. Recall the equation
\msk
\begin{equation*}
\begin{aligned}
\pi_x \circ \Psi^n_k(w) = \alpha_{n,\,k} \big[\,x + S^n_k(w)\,\big] + \si_{n,\,k} \big[\, t_{n,\,k}\,y + u_{n,\,k} (z + R^n_k(y)) \,\big] .
\end{aligned} \msk
\end{equation*}
%Recall %the estimation of the non linear part of $ \pi_x \circ \Psi^n_k(w) $. Then 
and recall $ x + S^n_k(w) = v_*(x) + O(\bar \eps^{2^k} + \rho^{n-k}) $. Thus 
$$ |\, v_*(x_1) - v_*(x_2) | = |\,v_*'(\bar x)\cdot (x_1 - x_2) | \geq C_0 > 0  $$
where $ v_*(x) $ is a diffeomorphism and $ \bar x $ is in the line segment between $ x_1 $ and $ x_2 $. Thus 

%\msk
\begin{align*}
 \dot x_1 - \dot x_2 %\\
&= \ \alpha_{n,\,k} \big[ \big(x_1 + S^n_k(w_1) \big) - \big(x_2 + S^n_k(w_2)\big) \big] \\
& \qquad 
+ \si_{n,\,k} \big[\, t_{n,\,k}(y_1 - y_2) + u_{n,\,k} \,\big \{ z_1 -z_2 + R^n_k(y_1) - R^n_k(y_2) \big \} \big] \\[0.3em]
&= \ \alpha_{n,\,k} \big[\,v_*'(\bar x)\cdot (x_1 - x_2) + O(\bar \eps^{2^k} + \rho^{n-k}) \big] \\
&\qquad + \si_{n,\,k} \big[\,t_{n,\,k}(y_1 - y_2) + u_{n,\,k} \,\big \{ z_1 -z_2 + (R^n_k)'(\bar y)\cdot (y_1 - y_2) \big \} \big]  \\[-1em]
\end{align*}  %\msk
\nin Then by Proposition \ref{estimation of t-n-k by b-1} and the estimations in the end of Section \,\ref{Horizontal overlap}, we obtain that
\msk 
\begin{align*}
\big| \;\! \dot x_1 - \dot x_2 \big| &= \Big|\: C_3\:\! \si^{2(n-k)} + \si^{n-k} \,\big[\, C_4 \;\! b_1^{2^k} + C_5\;\! \bar \eps^{2^k} \big( \,\bar \eps^{2^n} + \si^{n-k} \bar \eps^{2^k} \big) \,\big] \:\!\Big| \\[0.2em]   \numberthis    \label{lower estimate of x dot distance 2}
&= \Big|\:\si^{2(n-k)}\,\big[\,C_3 + C_4\,\bar \eps^{2^{k+1}} \,\big] + \si^{n-k} \,\big[\, C_4 \;\! b_1^{2^k} + C_5\;\! \bar \eps^{2^k} \bar \eps^{2^n} \,\big] \:\!\Big|
\end{align*}  %\msk
for some constants $ C_3 $, $ C_4 $, and $ C_5 $ which do not have to be positive. \ssk Let us take big enough $ n $ such that the condition $ b_1^{2^k} \gtrsim \bar \eps^{2^k} \bar \eps^{2^n} $ is satisfied. 
$$ n \geq k +A  $$
where $ A $ is the number depending only on $ \bar \eps $ and $ b_1 $ in Proposition \ref{estimation of t-n-k by b-1}. Hence, \ssk
\begin{equation*}
\begin{aligned}
\diam (B^{n}_{{\bf w}}) \geq \big|\;\!\dddot y_1 - \dddot y_2 \big| \geq \big| \,\si_{k,\,0}\, (\dot x_1 - \dot x_2 ) \big| \geq
\big| \, C_1 \, \si^k \si^{2(n-k)} - C_2\, \si^k \si^{n-k}b_1^{2^k} \big|
\end{aligned} \ssk
\end{equation*}
where $ {\bf w} = v^kc\,v^{n-k-1} \in W^n $ for some positive $ C_1 $ and $ C_2 $.
% we have the estimation
%$$  \diam (B^{n}_{{\bf w}v}) \geq C \, \si^k \si^{2(n-k)} $$
%where $ {\bf w} = v^kc\,v^{n-k-1} \in W^n $ for some $ C>0 $.
\end{proof}

\msk
\begin{rem}
In the above lemma, we may choose two points $ w_1 $ and $ w_2 $ which maximize $ |\;\!\dot x_1 - \dot x_2| $. Thus we may assume that
$$ \diam (B^{n}_{{\bf w}}) \geq \max \,\{\; C_1 \, \si^k \si^{2(n-k)},\ \ C_2\, \si^k \si^{n-k}b_1^{2^k} \,\} $$
with appropriate positive constants $ C_1 $ and $ C_2 $.
\end{rem} \msk
\nin Horizontal (or $ x- $axis) overlap is only related to the $ x- $coordinates of points $ \dot w_j \equiv \pi_x \circ \Psi^n_k (w_j) $ for $ j=1,2 $ where $ w_1 \in B^1_v(R^nF) \cap \OO_{R^nF} $ and $ w_2  \in B^1_c(R^nF) \cap \OO_{R^nF} $. %Thus Proposition \ref{estimation of t-n-k by b-1} and the estimation of $  \| (R^n_k)' \| $ implies that the proof for generic unbounded geometry is essentially the same as proof for those in \cite{CLM}. For the sake of completeness, see the proof in the below Proposition. 
Recall that $ b_2 $ is the number defined in Proposition \ref{asymptotic of di-z de-n in Delta} and $ b_1 $ is defined by the equation, $ b_1b_2 = b_F $ where $ b_F $ is the average Jacobian of $ F $. Recall that $ b_1 $ is also another universal constant by Lemma \ref{Another universal number b_1}.
\comm{******************
\nin In order to show the generic unbounded geometry of the Cantor set, both the level $ k $ and $ n $ travels through any big natural numbers toward the infinity with each fixed numbers $ b_1 $ and $ b_2 $. %Let us choose the number $ n > k $ such that $ \si^{n-k} \asymp b_1^{2^k} $ for each $ k $. 
Then by the comparison of the diameter of the box and minimal distance between adjacent boxes, $ \OO_F $ has the unbounded geometry.

\msk

\begin{prop} \label{generic unbounded geometry with b-1}
Let $ F_{b_1} $ is an element of parametrized family for $ b_1 \in [0,1] $ in $ {\NN} \cap \II_B(\bar \eps) $.  Then for some $ \bar b_1 > 0 $, the set of parameter values, an interval $ [0, \bar b_1] $ on which $ F_{b_1} $ has no bounded geometry of \,$ \OO_{F_{b_1}} $ contains a dense $ G_{\de} $ set. 
\end{prop}

\begin{proof}
Let us choose the two points
\begin{equation*}
\begin{aligned}
w_1 = (x_1,\,y_1,\,z_1) \in B^1_v(R^nF) \cap \OO_{R^nF}, \quad w_2 = (x_2,\,y_2,\,z_2) \in B^1_c(R^nF) \cap \OO_{R^nF}
\end{aligned}
\end{equation*}
such that $ | \:\! x_1 - x_2 | \asymp 1 $, $ | \:\! y_1 - y_2 | \asymp 1 $. Recall $ | \:\! z_1 - z_2 | = O(\bar \eps^{2^n}) $. Let $ \dot w_j = (\dot x_j,\, \dot y_j,\, \dot z_j) $ be $ \Psi^n_k(w_j) $ for $ j =1,2 $. Thus
\begin{equation} \label{x coordinate of the image under Psi n-k again}
\begin{aligned}
 \dot x_1 - \dot x_2 %\\
&= \ \alpha_{n,\,k} \Big[ \big(x_1 + S^n_k(w_1) \big) - \big(x_2 + S^n_k(w_2)\big) \Big] \\[0.2em]
& %\qquad 
+ \si_{n,\,k} \Big[\, t_{n,\,k}(y_1 - y_2) + u_{n,\,k} \big \{ z_1 -z_2 + R^n_k(y_1) - R^n_k(y_2) \big \} \Big] .
\end{aligned} \ssk
\end{equation}
Recall that $ \alpha_{n,\,k} = \si^{2(n-k)}(1 + O(\rho^k)) $,\, $ \si_{n,\,k} = (-\si)^{n-k}(1 + O(\rho^k)) $ and $ x + S^n_k(w) = v_*(x) + O(\bar \eps^{2^k} + \rho^{n-k}) $. Since $ v_* $ is a diffeomorphism and $ | \;\! x_1 - x_2 | \asymp 1 $, $ | \, v_*(x_1) - v_*(x_2) | \asymp 1 $ by mean value theorem. %\ssk \\ \nin 
Moreover, Proposition \ref{estimation of t-n-k by b-1} implies that
$$ b_1^{2^k} \asymp t_{n,\,k} . $$
In addition to the above estimation, the fact that $ \| (R^n_k)' \| = O\big(\si^{n-k} \bar \eps^{2^k} \big) $ and the estimation in \eqref{lower estimate of x dot distance} implies that 
\msk
\begin{equation}
\begin{aligned}
\big| \, u_{n,\,k}\, \big[ z_1 -z_2 + R^n_k(y_1) - R^n_k(y_2) \,\big]\,\big| &\leq \big| \, u_{n,\,k}\cdot (z_1 -z_2) \big| + \big| \,(R^n_k)'(\bar y)\cdot (y_1 - y_2) \big| \\[0.2em]
&= O\big(\,\bar \eps^{2^k} \bar \eps^{2^n} \big) + O\big( \si^{n-k}\bar \eps^{2^k}\big) .
\end{aligned} \msk
\end{equation}
\nin If $ n \geq k + A $, then we express the equation \eqref{x coordinate of the image under Psi n-k again} as follows
\begin{equation*}
\begin{aligned}
\dot x_1 - \dot x_2 = \si^{2(n-k)} \big[\, v_*(x_1) - v_*(x_2) \,\big] \cdot \big[\, 1 + r_{n,\,k}\,b_1^{2^k} (-\si)^{-(n-k)} \big](1 + O(\rho^k))
\end{aligned}
\end{equation*}
where $ r_{n,\,k} $ depends uniformly on $ b_1 $. Let $ r \leq r_{n,\,k} \leq \frac{1}{r} $. Let us take any number $ b_1^{-} $ in the parameter space $ (0, \bar b_1) $ and any natural number $ k \geq N $ for some big enough $ N $. %\ssk \\
\nin Then we can find the biggest number $ n $ such that $ n-k $ is odd and $ \si^{n-k} > \dfrac{1}{r}\;  (b_1^{-})^{2^k} $, that is,
$$ 1 + r_{n,\,k}\cdot(b_1^{-})^{2^k}(-\si)^{-(n-k)} \geq 1 + \dfrac{1}{r}\;(b_1^{-})^{2^k}(-\si)^{-(n-k)}>0 $$
Let us increase the parameter from $ b_1^- $ to $ b_1^+ $ such that $ (b_1^{+})^{2^k} = \dfrac{2}{r}\;\si^{(n-k)} $. Then
$$ 1 + r_{n,\,k}\cdot(b_1^{+})^{2^k}(-\si)^{-(n-k)} \leq 1 + r \cdot \dfrac{2}{r}\;(-1) = -1 < 0 $$

\nin Then there exists $ b_1 \in (b_1^-, b_1^+) $ such that \,$ \dot x_1 - \dot x_2 = 0 $. Thus $ \Psi^n_k(B^1_v(R^nF)) $ and $ \Psi^n_k(B^1_c(R^nF)) $ overlaps over the $ x- $axis with respect to $ \dot w_1 $ and $ \dot w_2 $. %Moreover, \, $ b_1^{2^k} \asymp \si^{n-k} $. 
For all big enough $ k $, $ b_1 \asymp b_1^- $. Thus $ \log(b_1 / b_1^-) = O(2^{-k}) $. \ssk Then $ b_1 $ converges to $ b_1^- $ as $ k \ra \infty $. Then we obtain the dense subset of the parameter, \ssk $ (0, \bar b_1) $ on which $ \Psi^n_k(B^1_v(R^nF)) $ and $ \Psi^n_k(B^1_c(R^nF)) $ overlaps over the $ x- $axis. Moreover, there exists open subset, $ J_m $ of parameter $ (0, \bar b_1) $ for each fixed level $ k \geq m $. Then $ \cap_{m} J_m $ is a $ G_{\de} $ subset of $ (0, \bar b_1) $. \ssk \\
\nin Let us compare the distance of two adjacent boxes and the diameter of the box for every big $ k + A < n $. Let us take $ n $ such that $ \si^{n-k} \asymp b_1^{2^k} $. We may assume \ssk that $ B^{n-k}_{{\bf v}v}(R^kF) $ overlaps $ B^{n-k}_{{\bf v}c}(R^kF) $ on the $ x- $axis where $ {\bf v} = v^{n-k-1} \in W^{n-k-1} $. By Lemma \ref{lower bounds of the diameter of boxes} and Lemma \ref{upper bound the distance of boxes},
\begin{equation*}
\begin{aligned}
 \diam (B^{n}_{{\bf w}v}) &\geq \big| \, C_1 \, \si^k \si^{2(n-k)} - C_2\, \si^k \si^{n-k}b_1^{2^k} \big|  \\[0.3em]
 \dist_{\min}(B^{n}_{{\bf w}v}, B^{n}_{{\bf w}c}) &\leq C_0 \big[\, \si^{2k} \si^{n-k} b_1^{2^k} + \si^{2k} \si^{2(n-k)} \bar \eps^{2^k} + \si^{k} \si^{2(n-k)} b_2^{2^k} \,\big] 
\end{aligned} \msk
\end{equation*}
where $ {\bf w} = v^kc\,v^{n-k-1} \in W^n $ for some numbers $ C_0 > 0 $ and $ C_1 $ and $ C_2 $. 
%\ssk \\
By Proposition \ref{estimation of t-n-k by b-1}, the condition of the overlapping of two adjacent boxes, $ B^{n-k}_{{\bf v}v}(R^kF) $ and $ B^{n-k}_{{\bf v}c}(R^kF) $ on the $ x- $axis implies that 
$$ \si^{n-k} \asymp b_1^{2^k} $$
Hence, 
\begin{equation*}
\dist_{\min}(B^{n}_{{\bf w}v}, B^{n}_{{\bf w}c}) \leq C \,\si^k  \diam (B^{n}_{{\bf w}v}) \msk
\end{equation*}
for every sufficiently large $ k \in \N $ and for some $ C>0 $. Then the critical Cantor set has the unbounded geometry.
\end{proof}
***********************}

\begin{prop} \label{generic unbounded geometry with b-1}
Let $ F_{b_1} $ be an element of parametrized space in $ {\NN} \cap \II(\bar \eps) $. If $ \si^{n-k} \asymp b_1^{2^k} $ for infinitely many $ k $ and $ n $, then there exists $ b_1 $ for $ F_{b_1} $ such that $ B^{n}_{{\bf v}v}(R^kF) $ overlaps $ B^{n}_{{\bf v}c}(R^kF) $ on the $ x- $axis where the word $ {\bf v} = v^{n-k} \in W^{n-k} $. Furthermore, $ F $ has no bounded geometry of \,$ \OO_{F} $. 
\end{prop}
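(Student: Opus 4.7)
The plan is to combine the horizontal overlap criterion with the estimates of Lemma \ref{upper bound the distance of boxes} and Lemma \ref{lower bounds of the diameter of boxes}. First I would pick two points $ w_1 \in B^1_v(R^nF) \cap \OO_{R^nF} $ and $ w_2 \in B^1_c(R^nF) \cap \OO_{R^nF} $ with $ | \:\! x_1 - x_2 | \asymp 1 $ and $ | \:\! y_1 - y_2 | \asymp 1 $; since both points lie in the critical Cantor set of $ R^nF $, the estimate \eqref{z distance of two points} gives $ |\:\! z_1 - z_2 | = O(\bar \eps^{2^n}) $. Using the explicit formula for $ \pi_x \circ \Psi^n_k $ exploited in Corollary \ref{comparison sigma n-k with b-1}, I would split $ \dot x_1 - \dot x_2 $ into the leading affine piece $ \alpha_{n,\,k}\,v_*'(\bar x)(x_1 - x_2) $ and the twist piece $ \si_{n,\,k}\,t_{n,\,k}(y_1 - y_2) $, with a remainder involving $ u_{n,\,k} $ and $ R^n_k $ that is strictly smaller than both, by the estimates collected at the end of Section \ref{Horizontal overlap}. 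Proposition \ref{estimation of t-n-k by b-1} yields $ t_{n,\,k} \asymp b_1^{2^k} $ for $ n \geq k + A $, while $ \alpha_{n,\,k} \asymp \si^{2(n-k)} $ and $ |\:\! \si_{n,\,k}| \asymp \si^{n-k} $; the hypothesis $ \si^{n-k} \asymp b_1^{2^k} $ then forces the two dominant summands to be of the common order $ \si^{2(n-k)} $.

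Because $ \si_{n,\,k} = (-\si)^{n-k}(1 + O(\rho^k)) $ alternates in sign with the parity of $ n-k $ while $ \alpha_{n,\,k} > 0 $, for a suitable parity the two dominant terms in $ \dot x_1 - \dot x_2 $ have opposite signs. Varying $ b_1 $ continuously over a small interval in the parameter space moves $ t_{n,\,k} $ through the scale $ \asymp b_1^{2^k} $ while perturbing the affine piece $ \alpha_{n,\,k}\,v_*'(\bar x)(x_1 - x_2) $ only to lower order. An intermediate value argument then produces a parameter value $ b_1 $, still compatible with $ \si^{n-k} \asymp b_1^{2^k} $, at which $ \dot x_1 - \dot x_2 = 0 $. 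For such $ b_1 $, the pieces $ B^n_{{\bf v}v}(R^kF) $ and $ B^n_{{\bf v}c}(R^kF) $ overlap on the $ x $-axis.

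To conclude unbounded geometry, substitute $ \si^{n-k} \asymp b_1^{2^k} $ into Lemma \ref{upper bound the distance of boxes} and Lemma \ref{lower bounds of the diameter of boxes}. Each of the three summands in the distance bound acquires the common factor $ b_1^{2^{k+1}} $, giving $ \dist_{\min}(B^n_{{\bf w}v}, B^n_{{\bf w}c}) \lesssim \si^{2k}\,b_1^{2^{k+1}}(1 + \bar \eps^{2^k}) + \si^k\,b_2^{2^k}\,b_1^{2^{k+1}} $, while the remark after Lemma \ref{lower bounds of the diameter of boxes} yields $ \diam(B^n_{{\bf w}v}) \gtrsim \si^k\,b_1^{2^{k+1}} $. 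Hence the ratio satisfies $ \dist_{\min} / \diam \lesssim \si^k + b_2^{2^k} $, which tends to zero as $ k \to \infty $. Since the hypothesis supplies infinitely many such pairs $ (k,n) $, the bounded geometry condition fails at arbitrarily deep levels, and therefore $ \OO_F $ has unbounded geometry.

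The main obstacle I anticipate is the intermediate value step: one must verify that the dependence of $ \dot x_1 - \dot x_2 $ on the parameter $ b_1 $ is sign-changing on a small interval, which reduces to showing that the $ b_1 $-dependence of $ t_{n,\,k} $ dominates that of the other implicit data (tip location, coordinate changes, $ u_{n,\,k} $, $ R^n_k $). The quantitative scaling $ t_{n,\,k} \asymp b_1^{2^k} $ from Proposition \ref{estimation of t-n-k by b-1} is precisely the input that delivers this leading-order sensitivity; all remaining corrections are of strictly smaller order under $ \si^{n-k} \asymp b_1^{2^k} $ and cannot destroy the sign change.
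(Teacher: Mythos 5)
Your proposal is correct and follows essentially the same route as the paper: the same splitting of $\dot x_1 - \dot x_2$ into the affine piece $\alpha_{n,\,k}\,v_*'(\bar x)(x_1-x_2)$ of order $\si^{2(n-k)}$ and the twist piece $\si_{n,\,k}\,t_{n,\,k}(y_1-y_2)$ of order $\si^{n-k}b_1^{2^k}$, the same appeal to Proposition \ref{estimation of t-n-k by b-1}, and the same closing comparison of Lemma \ref{upper bound the distance of boxes} against Lemma \ref{lower bounds of the diameter of boxes} (plus the remark following the latter). You are in fact somewhat more careful than the paper's published proof in spelling out the intermediate-value/parity argument for producing the overlap parameter, and your final ratio bound $\dist_{\min}/\diam \lesssim \si^k + b_2^{2^k}$ is a slightly more conservative form of the paper's $\lesssim \si^k$, but it tends to zero just the same, so the conclusion is unaffected.
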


\begin{proof}
Let us choose the two points
\begin{equation*}
\begin{aligned}
w_1 = (x_1,\,y_1,\,z_1) \in B^1_v(R^nF) \cap \OO_{R^nF}, \quad w_2 = (x_2,\,y_2,\,z_2) \in B^1_c(R^nF) \cap \OO_{R^nF}
\end{aligned}
\end{equation*}
such that $ | \:\! x_1 - x_2 | \asymp 1 $, $ | \:\! y_1 - y_2 | \asymp 1 $. Recall $ | \:\! z_1 - z_2 | = O(\bar \eps^{2^n}) $. Let $ \dot w_j = (\dot x_j,\, \dot y_j,\, \dot z_j) $ be $ \Psi^n_k(w_j) $ for $ j =1,2 $. Thus
\begin{equation} \label{x coordinate of the image under Psi n-k again}
\begin{aligned}
 \dot x_1 - \dot x_2 %\\
&= \ \alpha_{n,\,k} \Big[ \big(x_1 + S^n_k(w_1) \big) - \big(x_2 + S^n_k(w_2)\big) \Big] \\[0.2em]
& %\qquad 
+ \si_{n,\,k} \Big[\, t_{n,\,k}(y_1 - y_2) + u_{n,\,k} \big \{ z_1 -z_2 + R^n_k(y_1) - R^n_k(y_2) \big \} \Big] .
\end{aligned} \ssk
\end{equation}
Recall that $ \alpha_{n,\,k} = \si^{2(n-k)}(1 + O(\rho^k)) $,\, $ \si_{n,\,k} = (-\si)^{n-k}(1 + O(\rho^k)) $ and $ x + S^n_k(w) = v_*(x) + O(\bar \eps^{2^k} + \rho^{n-k}) $. Since $ v_* $ is a diffeomorphism and $ | \;\! x_1 - x_2 | \asymp 1 $, $ | \, v_*(x_1) - v_*(x_2) | \asymp 1 $ by mean value theorem. %\ssk \\ \nin 
Moreover, Proposition \ref{estimation of t-n-k by b-1} implies that
$$ b_1^{2^k} \asymp t_{n,\,k} . $$
In addition to the above estimation, the fact that $ \| (R^n_k)' \| = O\big(\si^{n-k} \bar \eps^{2^k} \big) $ and the estimation in \eqref{lower estimate of x dot distance} implies that 
\msk
\begin{equation*}
\begin{aligned}
\big| \, u_{n,\,k}\, \big[ z_1 -z_2 + R^n_k(y_1) - R^n_k(y_2) \,\big]\,\big| &\leq \big| \, u_{n,\,k}\cdot (z_1 -z_2) \big| + \big| \,(R^n_k)'(\bar y)\cdot (y_1 - y_2) \big| \\[0.2em]
&= O\big(\,\bar \eps^{2^k} \bar \eps^{2^n} \big) + O\big( \si^{n-k}\bar \eps^{2^k}\big) .
\end{aligned} \msk
\end{equation*}
\nin If $ n \geq k + A $, then we express the equation \eqref{x coordinate of the image under Psi n-k again} as follows
\begin{equation*}
\begin{aligned}
\dot x_1 - \dot x_2 = \si^{2(n-k)} \big[\, v_*(x_1) - v_*(x_2) \,\big] \cdot \big[\, 1 + r_{n,\,k}\,b_1^{2^k} (-\si)^{-(n-k)} \big](1 + O(\rho^k))
\end{aligned}
\end{equation*}
where $ \frac{1}{r} \leq r_{n,\,k} \leq r $ for some number $ r $ depends uniformly on $ b_1 $. Let us take $ n $ such that 
$$ \si^{n-k} \asymp b_1^{2^k} . $$
\nin Then we may assume \ssk that $ B^{n-k}_{{\bf v}v}(R^kF) $ overlaps $ B^{n-k}_{{\bf v}c}(R^kF) $ on the $ x- $axis where $ {\bf v} = v^{n-k-1} \in W^{n-k-1} $ for infinitely many big enough $ n-k $. Let us compare the distance of two adjacent boxes and the diameter of box. By Lemma \ref{lower bounds of the diameter of boxes} and Lemma \ref{upper bound the distance of boxes}, we obtain that %\msk
\begin{align*}
 \diam (B^{n}_{{\bf w}v}) &\geq \big| \, C_1 \, \si^k \si^{2(n-k)} - C_2\, \si^k \si^{n-k}b_1^{2^k} \big|  \\[0.3em]
 \dist_{\min}(B^{n}_{{\bf w}v}, B^{n}_{{\bf w}c}) &\leq C_0 \big[\, \si^{2k} \si^{n-k} b_1^{2^k} + \si^{2k} \si^{2(n-k)} \bar \eps^{2^k} + \si^{k} \si^{2(n-k)} b_2^{2^k} \,\big] \\[-1em]
\end{align*}  %\msk
where $ {\bf w} = v^kc\,v^{n-k-1} \in W^n $ for some numbers $ C_0 > 0 $ and $ C_1 $ and $ C_2 $. Hence, 
\begin{equation*}
\dist_{\min}(B^{n}_{{\bf w}v}, B^{n}_{{\bf w}c}) \leq C \,\si^k  \diam (B^{n}_{{\bf w}v}) \msk
\end{equation*}
for every sufficiently large $ k \in \N $ and for some $ C>0 $. Then the critical Cantor set has unbounded geometry.
\end{proof}
\msk

\nin Overlapping is almost everywhere property in the sense of Lebesgue in \cite{HLM}. See the following Theorem. 
\begin{thm}[\cite{HLM}]
Given any $ 0<A_0<A_1 $, $ 0<\si <1 $ and any $ p \geq 2 $, the set of parameters $ b \in [0,1] $ for which there are infinitely many $ 0<k<n $ satisfying 
$$ A_0 < \frac{b^{p^k}}{\si^{n-k}} <A_1 $$
is a dense $ G_{\de} $ set with full Lebesgue measure.
\end{thm}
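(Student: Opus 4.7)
The strategy I would follow is to pass to logarithmic coordinates, so that the arithmetic condition becomes an orbit hitting statement for the multiplication-by-$p$ map on a circle. The dense $G_{\de}$ claim would then follow from a direct Baire argument, while the full measure claim would follow from ergodicity combined with an absolutely continuous change of variables.

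First I would set $x = \log(1/b)$ and $\alpha = \log(1/\si) > 0$. Taking logs, the condition $A_0 < b^{p^k}/\si^{n-k} < A_1$ becomes
\begin{equation*}
(n-k)\alpha - \log A_1 \,<\, p^k x \,<\, (n-k)\alpha - \log A_0,
\end{equation*}
so for fixed $k \geq 1$ an admissible $n > k$ exists if and only if $p^k x \bmod \alpha$ lies in the arc $J$ of length $\ell = \log(A_1/A_0)$ obtained by reducing $(-\log A_1, -\log A_0)$ on the circle $\R/\alpha\Z$ (the case $\ell \geq \alpha$ is trivial, since $J$ then covers the whole circle). Writing $U_k = \{b \in (0,1) : p^k \log(1/b) \bmod \alpha \in J\}$, each $U_k$ is open and the set in the theorem equals $\limsup U_k = \bigcap_N \bigcup_{k \geq N} U_k$.

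For the dense $G_{\de}$ half, each $\bigcup_{k \geq N} U_k$ is already open, so only density needs proof. On any open interval $(b_0, b_1) \subset (0,1)$, the variable $x$ ranges over an interval of length $\log(b_1/b_0) > 0$, so $p^k x$ ranges over one of length $p^k \log(b_1/b_0)$. Once $k$ is so large that this exceeds $\alpha$, the map $b \mapsto p^k \log(1/b) \bmod \alpha$ surjects onto $\R/\alpha\Z$ on $(b_0, b_1)$, and the preimage of $J$ is a nonempty open subset of $(b_0, b_1)$. Baire's theorem then yields the dense $G_{\de}$.

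The main obstacle will be the full measure claim, which I would settle via ergodic theory. Let $T(y) = py \bmod \alpha$ on $\R/\alpha\Z$: for integer $p \geq 2$ this is Lebesgue ergodic, and for real $p \geq 2$ the sequence $\{p^k y\}$ is equidistributed modulo $\alpha$ for Lebesgue-a.e.\ $y$ by Koksma's theorem. Setting $y(b) = x \bmod \alpha$, we have $b \in U_k$ iff $T^k(y(b)) \in J$. The pushforward of Lebesgue measure from $(0,1)$ under $b \mapsto y(b)$ on $[0, \alpha)$ has density
\begin{equation*}
\sum_{j \geq 0} e^{-(y + j\alpha)} = \frac{e^{-y}}{1 - e^{-\alpha}},
\end{equation*}
which is pinched between two positive constants. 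For Lebesgue-a.e.\ $y \in [0, \alpha)$ the orbit $\{T^k(y)\}$ enters the open arc $J$ infinitely often; the same then holds for almost every $b$ under the pushforward, and the absolutely continuous change of variables transfers the conclusion back to full Lebesgue measure on $(0,1)$. The delicate step is precisely this transfer, and it goes through because the density displayed above is uniformly bounded away from both $0$ and $\infty$, so Lebesgue-null exceptional sets on the $y$-side pull back to Lebesgue-null sets in the $b$-parameter.
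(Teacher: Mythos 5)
The paper cites this theorem directly from \cite{HLM} and supplies no proof of its own, so there is nothing in this source to compare against; I can only assess your argument on its merits. The overall strategy — pass to logarithmic coordinates, reduce the condition to a lacunary-orbit hitting problem on a circle, prove density via a Baire argument, and prove full measure via equidistribution plus a mutually absolutely continuous change of variables — is the right one, and the core steps (openness of the $U_k$, surjectivity of $b \mapsto p^k\log(1/b)\bmod\alpha$ on any subinterval once $k$ is large, the explicit pushforward density $e^{-y}/(1-e^{-\alpha})$ bounded away from $0$ and $\infty$) are correct.

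There is one genuine imprecision that should be repaired. You set $y(b) = x \bmod \alpha$ and assert ``$b \in U_k$ iff $T^k(y(b)) \in J$,'' where $T(y) = py \bmod \alpha$. This identity is correct only for integer $p$: when $p$ is an integer, $p^k(x + j\alpha) \equiv p^k x \pmod{\alpha}$ so that $T$ descends to a genuine circle map and $T^k(y(b)) = p^k x \bmod \alpha$. For non-integer $p \ge 2$, the map $y \mapsto py \bmod \alpha$ is not well defined on $\R/\alpha\Z$, and $p^k(x\bmod\alpha) \bmod\alpha$ generally differs from $p^k x \bmod\alpha$, so the reduction to a circle dynamical system breaks. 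The statement you invoke next (the Weyl/Koksma lacunary equidistribution theorem) is, however, an assertion about $\{p^k x\}_k$ for Lebesgue-a.e.\ $x \in \R$, not about the quotient variable $y$, so the fix is simply to skip the quotient: apply equidistribution to $x = \log(1/b)$ directly, noting that the pushforward of Lebesgue measure from $b\in(0,1)$ to $x\in(0,\infty)$ has density $e^{-x}$, which is mutually absolutely continuous with Lebesgue measure on $(0,\infty)$, so the a.e.\ exceptional set in $x$ pulls back to a Lebesgue-null set in $b$. A second, smaller point worth stating explicitly: your $U_k$ counts all $m\in\Z$ with $m\alpha - \log A_1 < p^k x < m\alpha - \log A_0$, while the theorem requires $m = n - k \ge 1$; since $p^k x \to \infty$ for each fixed $b\in(0,1)$, the two conditions agree for all $k \geq k_0(b)$, which is enough because you only need the $\limsup$, but this should be said rather than left implicit.
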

\msk
\nin Then unbounded geometry is almost everywhere property in the parameter set of $ b_1 $ for every fixed $ b_2 $.
\begin{thm} \label{unbounded geometry with b-1 a.e.}
Let $ F_{b_1} $ be an element of parametrized space in $ {\NN} \cap \II(\bar \eps) $ with $ b_1 = b_F/b_2 $. Then there exists a small interval $ [0, b_{\bullet}] $ for which there exists a $ G_{\de} $ subset $ S \subset [0, b_{\bullet}] $ with full Lebesgue measure such that the critical Cantor set, $ \OO_{F_{b_1}} $ has unbounded geometry for all $ b_1 \in S $. 
\end{thm}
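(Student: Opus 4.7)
The plan is to reduce the statement to a direct application of the theorem of de~Carvalho--Lyubich--Martens (stated just above) combined with Proposition \ref{generic unbounded geometry with b-1}. The former supplies the measure-theoretic genericity of the overlap condition on the parameter $b_1$, and the latter converts any parameter satisfying that condition into an instance of unbounded geometry.

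First, I would fix the interval $[0,b_\bullet]$. The endpoint $b_\bullet$ must be chosen small enough that every $F_{b_1}$ with $b_1 \in [0,b_\bullet]$ still lies in $\NN \cap \II(\bar\eps)$ (so that all the structural facts $\diam B^n_{\bf w} \leq C\si^n$, the expressions for $\Psi^n_k$, the estimates of Lemma \ref{recursive formula of d, u, and t}, and Proposition \ref{estimation of t-n-k by b-1} apply), and small enough that the threshold $A = A(b_1,\bar\eps)$ from Proposition \ref{estimation of t-n-k by b-1} is uniformly bounded on $[0,b_\bullet]$. This is a purely quantitative book-keeping step; one picks $b_\bullet$ so that $b_1 < \bar\eps^{\,2}$ is automatic and the overlap machinery of Section~5 is uniformly available.

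Next, I invoke the cited theorem from \cite{HLM} with $p = 2$ and with $\si$ equal to the universal scaling factor of the unimodal renormalization fixed point (this is the $\si$ that governs $\si_{n,k} = (-\si)^{n-k}(1 + O(\rho^k))$ in Lemma \ref{recursive formula of d, u, and t}). Choosing any $0 < A_0 < A_1$ — for concreteness one may take $A_0 = 1/2$ and $A_1 = 2$ — produces a $G_\de$ set $S \subset [0,b_\bullet]$ of full Lebesgue measure in $[0,b_\bullet]$ such that for every $b_1 \in S$ there are infinitely many pairs $0 < k < n$ with
\begin{equation*}
A_0 \;<\; \frac{b_1^{2^k}}{\si^{n-k}} \;<\; A_1,
\end{equation*}
equivalently $\si^{n-k} \asymp b_1^{2^k}$ with constants depending only on $A_0, A_1$. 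Passing to a subsequence if necessary we can also require $n \geq k + A$, which is automatic once $k$ (hence $n-k$) is large.

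For any such $b_1 \in S$, Proposition \ref{generic unbounded geometry with b-1} applies: with the overlapping pair $B^n_{{\bf v}v}(R^kF)$ and $B^n_{{\bf v}c}(R^kF)$ ($\mathbf v = v^{n-k-1}$) on the $x$-axis, the comparison of Lemma \ref{upper bound the distance of boxes} with Lemma \ref{lower bounds of the diameter of boxes} yields
\begin{equation*}
\dist_{\min}(B^n_{\mathbf w v}, B^n_{\mathbf w c}) \;\leq\; C\, \si^k \, \diam (B^n_{\mathbf w v}),
\end{equation*}
for infinitely many $k$, with $\mathbf w = v^k c\, v^{n-k-1}$. Since $\si^k \to 0$, the ratio of minimal distance to diameter is not bounded below, so $\OO_{F_{b_1}}$ fails the bounded geometry condition, i.e. it has unbounded geometry.

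The only real obstacle is the alignment of exponents and constants between the HLM genericity theorem and the overlap criterion of Proposition \ref{generic unbounded geometry with b-1}; that is, one must verify that the asymptotic $\si^{n-k} \asymp b_1^{2^k}$ used in Section~6 is literally the same condition (up to harmless multiplicative constants) as the one controlled by the HLM theorem with $p=2$. Once this identification is made and $b_\bullet$ is chosen so that the Section~5--6 estimates are uniformly valid on $[0,b_\bullet]$, the theorem follows.
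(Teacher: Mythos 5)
Your proposal is correct and takes essentially the same route the paper intends: the paper states the HLM genericity theorem immediately before Theorem \ref{unbounded geometry with b-1 a.e.} and leaves the deduction implicit, and your argument is precisely that deduction — apply the HLM theorem with $p=2$ to produce the full-measure $G_\delta$ set of parameters where $\si^{n-k}\asymp b_1^{2^k}$ infinitely often, then invoke Proposition \ref{generic unbounded geometry with b-1} (built on Lemmas \ref{upper bound the distance of boxes} and \ref{lower bounds of the diameter of boxes}) to convert that condition into $\dist_{\min}(B^n_{\mathbf{w}v},B^n_{\mathbf{w}c}) \leq C\si^k\diam(B^n_{\mathbf{w}v})$, which destroys bounded geometry.
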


\bsk

\section{Non rigidity on critical Cantor set}
Let $ F $ and $ \widetilde F $ be H\'enon-like maps in $ \NN \cap \II(\bar \eps) $. Let the universal number $ b_1 $ and $ \widetilde b_1 $ are for the map $ F $ and $ \widetilde F $ respectively. Non rigidity on critical Cantor set with respect to the universal constant $ b_1 $ means that the homeomorphism between critical Cantor sets, $ \OO_F $ and $ \OO_{\widetilde F} $ is at most $ \alpha- $H\"older continuous with a constant $ \alpha < 1 $ (Theorem \ref{Non rigidity with b-1} below). This kind of non rigidity phenomenon is a generalization of two dimensional one in \cite{CLM}. However, non rigidity of three dimension maps only depends essentially on the universal number, $ b_1 $ from two dimensional H\'enon-like map in three dimension.

\subsection{Bounds of the distance between two points}
Let us consider the box 
$$ B^n_{\bf w} = \Psi^k_0 \circ F_k \circ \Psi^n_k (B) $$ 
where $ B = B(R^nF) $. Since $ \diam B(R^nF) \asymp \diam B^1_v(R^nF) $, 
%The diameter of the box $ B^{n}_{{\bf w}} $ and $ B^{n}_{{\bf w}v} $ has same bounds up to the constant $ -\si_{n+1,\,n} = \si (1 + O(\rho^n)) $. 
by Lemma \ref{lower bounds of the diameter of boxes} we have the lower bound of $ \diam B(R^nF) $ as follows
\ssk
\begin{equation} \label{lower bound of the distance of two points}
\begin{aligned}
\diam (B^{n}_{{\bf w}}) \geq \big| \, C_1 \, \si^k \si^{2(n-k)} - C_2\, \si^k \si^{n-k}b_1^{2^k} \big|
\end{aligned} \ssk
\end{equation}
where $ {\bf w} = v^kc\,v^{n-k-1} \in W^n $ for some positive $ C_1 $ and $ C_2 $. %Let us estimate the upper bound of the distance. %The estimation does not contain the assumption of horizontal overlapping of some two points. %Then the distance of all general two points has the larger upper bound than distance with horizontal overlapping.

\begin{lem} \label{upper bound of the distance of two points}
Let $ F \in {\NN} \cap \II(\bar \eps) $. 
%where $ j =1,2 $. % and $ \nu = c \ \text{or} \ v $. 
Then 
$$  \diam (B^{n}_{{\bf w}}) \leq C \, \big[\,\si^k \si^{2(n-k)} + \si^k \si^{n-k} b_1^{2^k} \,\big] $$
where $ {\bf w} = v^kc\,v^{n-k-1} \in W^n $ for some $ C>0 $.
\end{lem}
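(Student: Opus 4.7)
The plan is to mirror the lower-bound proof of Lemma \ref{lower bounds of the diameter of boxes}, but to track upper rather than lower estimates, following the same chain of images $w_j \mapsto \dot w_j = \Psi^n_k(w_j) \mapsto \ddot w_j = F_k(\dot w_j) \mapsto \dddot w_j = \Psi^k_0(\ddot w_j)$. Since $B^n_{\bf w} = \Psi^k_0 \circ F_k \circ \Psi^n_k(B(R^nF))$, it suffices to bound $|\dddot w_1 - \dddot w_2|$ for arbitrary $w_1, w_2 \in B(R^nF)$, whose coordinate differences are $O(1)$.

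First I would apply $\Psi^n_k$ using its explicit matrix form together with $\alpha_{n,k} = \si^{2(n-k)}(1+O(\rho^k))$, $\si_{n,k} = (-\si)^{n-k}(1+O(\rho^k))$, the upper bound $|t_{n,k}| \leq C b_1^{2^k}$ from Proposition \ref{estimation of t-n-k by b-1} (valid for $n \geq k + A$), and $|u_{n,k}|, |d_{n,k}|, \|R^n_k\|, \|(R^n_k)'\| = O(\bar\eps^{2^k})$ collected at the end of Section 5; this reproduces precisely the inequality of \eqref{lower estimate of x dot distance 2}, so that
\[
|\dot x_1 - \dot x_2| \leq C_1 \si^{2(n-k)} + C_2 \si^{n-k} b_1^{2^k},
\]
together with $|\dot y_1 - \dot y_2|, |\dot z_1 - \dot z_2| \leq C \si^{n-k}$.

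Next I would push through $F_k$: the identity $\pi_y \circ F_k = \pi_x$ gives $|\ddot y_1 - \ddot y_2| = |\dot x_1 - \dot x_2|$, while the mean value theorem combined with the derivative bounds on $\eps_k$ and $\de_k$ yields $|\ddot x_1 - \ddot x_2| \leq C|\dot x_1 - \dot x_2|$ and a much smaller $|\ddot z_1 - \ddot z_2|$. Finally, applying $\Psi^k_0$ introduces the scales $\alpha_{k,0} \asymp \si^{2k}$ and $\si_{k,0} \asymp \si^k$; the leading contribution is the $\dddot y$-coordinate $|\dddot y_1 - \dddot y_2| = |\si_{k,0}|\,|\ddot y_1 - \ddot y_2| \leq C\si^k[\si^{2(n-k)} + \si^{n-k} b_1^{2^k}]$, which already matches the stated bound, and the $\dddot x$- and $\dddot z$-coordinates carry extra factors of $\si^k$ or the small coefficients $t_{k,0}, u_{k,0}, d_{k,0}$, so they fit inside the same estimate. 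Summing by the triangle inequality delivers the desired upper bound.

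The main obstacle is the bookkeeping for cross terms that nominally involve the universal factor $b_2^{2^k}$, which enters through $\di_z \de_k$ during the $F_k$-step and must be shown not to dominate the $b_1^{2^k}$ term. In each such combination the $b_2^{2^k}$ factor is paired with the short distance $|\dot z_1 - \dot z_2| = O(\si^{n-k})$ and with a small off-diagonal coefficient of $D\Psi^k_0$ (namely $u_{k,0}$ or $d_{k,0}$, both $O(\bar\eps)$); verifying uniformly over $k$ and $n \geq k + A$ that every such contribution stays below $C[\si^k \si^{2(n-k)} + \si^k \si^{n-k} b_1^{2^k}]$ is the most delicate part of the argument.
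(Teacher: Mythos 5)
Your decomposition of $B^n_{\bf w} = \Psi^k_0 \circ F_k \circ \Psi^n_k(B)$ and the chain of images $w_j \mapsto \dot w_j \mapsto \ddot w_j \mapsto \dddot w_j$ is exactly the paper's approach, and the final bookkeeping (the $\dddot y$-coordinate dominating, the $\si^k$ factor from $\si_{k,0}$, and using $n \geq k+A$ to absorb the $\bar\eps^{2^k}\bar\eps^{2^n}$ tail) matches Lemma \ref{upper bound of the distance of two points} in the paper. However, the middle step --- pushing through $F_k$ --- has a genuine gap, and your diagnosis of where the $b_2^{2^k}$ factors come from is incorrect in a way that matters.

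The dominant contribution to $\ddot z_1 - \ddot z_2$ is \emph{not} $\di_z\de_k(\zeta)\cdot(\dot z_1 - \dot z_2)$, paired with a small $z$-distance and an off-diagonal coefficient of $D\Psi^k_0$; it is $\di_y\de_k(\zeta)\cdot(\dot y_1 - \dot y_2)$, where $|\dot y_1 - \dot y_2| \asymp \si^{n-k}$. If you bound this with the crude estimate $|\di_y\de_k| = O(\bar\eps^{2^k})$, you get $|\ddot z_1 - \ddot z_2| = O(\bar\eps^{2^k}\si^{n-k})$, and after the $\si_{k,0}$ factor in $\Psi^k_0$ the contribution to $\diam(B^n_{\bf w})$ is $O(\si^k\bar\eps^{2^k}\si^{n-k})$. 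This is \emph{not} controlled by $\si^k\si^{2(n-k)} + \si^k\si^{n-k}b_1^{2^k}$: dividing by $\si^k\si^{n-k}$, you would need $\bar\eps^{2^k} \lesssim \si^{n-k} + b_1^{2^k}$, which fails for large $n-k$ (the regime that actually occurs in the application to Theorem \ref{Non rigidity with b-1}) and, when $b_1 \gtrsim \bar\eps^2$, also fails because $b_1^{2^k} \lesssim \bar\eps^{2^{k+1}} \ll \bar\eps^{2^k}$. Exactly the same issue appears in $\ddot x_1 - \ddot x_2$ through the term $\di_y\eps_k(\eta)\cdot(\dot y_1 - \dot y_2)$, so the claim $|\ddot x_1 - \ddot x_2| \leq C|\dot x_1 - \dot x_2|$ also does not survive crude bounds (and is in any case false pointwise, since $\dot x_1 - \dot x_2$ can vanish).

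What rescues the paper's proof --- and what your plan omits --- is the near-cancellation supplied by Lemma \ref{asymptotic of di-y de-n in Delta} and Lemma \ref{di-y eps and q asymptotic from n to k}: on the image $\Psi^n_k(B)$, the combinations $\di_y\de_k \cdot (\di_z\de_k)^{-1} + \sum_i q_i$ and $\di_y\eps_k + \di_z\eps_k \sum_i q_i$ are both $O(\si^{n-k})$ and $O(b_1^{2^k} + \bar\eps^{2^k}\si^{n-k})$ respectively, not merely $O(\bar\eps^{2^k})$. Combined with Proposition \ref{formal expression of pi-z Psi difference}, which rewrites $d_{n,k}(y_1-y_2) + R^n_k(y_1) - R^n_k(y_2)$ as $\sum_i q_i\circ(\si_{n,i}\bar y)\,(y_1-y_2)$ so that these sums surface in the mean-value expansion, this is what turns the naive $\bar\eps^{2^k}\si^{n-k}$ into $b_2^{2^k}\si^{2(n-k)}$ for $|\ddot z|$ and $b_1^{2^k}\si^{n-k}$ for $|\ddot x|$. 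Once you have those refined estimates (the paper's \eqref{ddot x distance upper bound} and \eqref{ddot z distance upper bound}), the rest of your argument goes through: the $b_2^{2^k}$ factor then enters only as a bounded multiplicative constant in front of $\si^k\si^{2(n-k)}$, and no appeal to smallness of $u_{k,0}$ or $d_{k,0}$ is needed.
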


\begin{proof}
Recall the map $ \Psi^n_k $ from $ B(R^nF) $ to $ B^{n}_{{\bf v}}(R^kF) $.\msk
\begin{align*}
\Psi^n_k(w) = 
\begin{pmatrix}
1 & t_{n,\,k} & u_{n,\,k} \\[0.2em]
& 1 & \\
& d_{n,\,k} & 1
\end{pmatrix}
\begin{pmatrix}
\alpha_{n,\,k} & & \\
& \si_{n,\,k} & \\
& & \si_{n,\,k}
\end{pmatrix}
\begin{pmatrix}
x + S^n_k(w) \\
y \\[0.2em]
z + R^n_k(y)     \msk
\end{pmatrix} .
\end{align*}
\\
where $ {\bf v} = v^{n-k} \in W^{n-k} $. Let us \ssk choose two points 
$$ w_1 = (x_1,\,y_1,\,z_1) \in B^1_v(R^nF) \cap \OO_{R^nF}, \quad w_2 = (x_2,\,y_2,\,z_2) \in B^1_c(R^nF) \cap \OO_{R^nF}. $$
Recall $ \dot w_j = \Psi^n_k(w_j) $, $ \ddot w_j = F_k(\dot w_j) $ and $ \dddot w_j = \Psi^k_0(\ddot w_j) $ for $ j =1,2 $. %Let us express distances between each coordinates of $ \dot w_1 $ and $ \dot w_2 $, $ \ddot w_1 $ and $ \ddot w_2 $ and between $ \dddot w_1 $ and $ \dddot w_2 $. 
Observe that $ | \:\! x_1 - x_2 | $ and $ | \:\! y_1 - y_2| $ is $ O(1) $. We may assume that $ | \:\! z_1 -z_2| = O(\bar \eps^{2^n}) $ because $ \OO_{R^nF} $ is a completely invariant set under $ R^nF $. %\ssk \\
By Corollary \ref{comparison sigma n-k with b-1} and the equation \eqref{lower estimate of x dot distance}, we have 
\msk
\begin{align*}
\dot x_1 - \dot x_2 
&= \ \alpha_{n,\,k} \big[ \big(x_1 + S^n_k(w_1) \big) - \big(x_2 + S^n_k(w_2)\big) \big] \\[0.2em]
&\qquad + \si_{n,\,k} \big[\, t_{n,\,k}(y_1 - y_2) + u_{n,\,k} \,\big \{ z_1 -z_2 + R^n_k(y_1) - R^n_k(y_2) \big \} \big] \\[0.4em]
&= \ \alpha_{n,\,k} \big[\,v_*'(\bar x) + O(\bar \eps^{2^k} + \rho^{n-k}) \big] (x_1 - x_2) \\[0.2em]
&\qquad + \si_{n,\,k} \big[\; t_{n,\,k}(y_1 - y_2) + u_{n,\,k} \,\big \{ z_1 -z_2 + R^n_k(y_1) - R^n_k(y_2) \big \} \big] \\[0.5em]
&\leq \ C\, \big[\, \si^{2(n-k)} + \si^{n-k} ( b_1^{2^k} + \bar \eps^{2^k} \bar \eps^{2^n} ) \,\big] \\[-1em]  \numberthis \label{distance of x coordinate under Psi-n-k}
\end{align*} 
for some $ C>0 $. Moreover, \ssk
\begin{align*}
\dot y_1 - \dot y_2 &= \ \si_{n,\,k} (y_1 - y_2) \\[0.3em]
\dot z_1 - \dot z_2 &= \ \si_{n,\,k} \big[\, d_{n,\,k}(y_1 -y_2) + z_1 -z_2 + R^n_k(y_1) - R^n_k(y_2)\, \big] . \\[-1.2em]
\end{align*} 
\nin By the equation \eqref{distance of ddot x-1 and ddot x-2}, we estimate the distance between each coordinates of $ F_k(\dot w_1) $ and $ F_k(\dot w_2) $ as follows \msk
% \label{coordinate distance of ddot x-1 and ddot x-2}
\begin{align*}
\ddot{x}_1 - \ddot{x}_2 &= \ f_k(\dot x_1) - \eps_k(\dot w_1) - [f_k(\dot x_2) - \eps_k(\dot w_2)] \\[0.3em]
&= \ f'_k(\bar x)\cdot(\dot x_1 - \dot x_2) - \eps_k(\dot w_1 ) + \eps_k(\dot w_2)\\[0.4em]
&= \ [\,f'_k(\bar x) - \di_x \eps_k(\eta) \,]\cdot(\dot x_1 - \dot x_2)- \di_y \eps_k(\eta) \cdot (\dot y_1 - \dot y_2 ) - \di_z \eps_k(\eta) \cdot (\dot z_1 - \dot z_2) \\[0.5em]
&= \ [\,f'_k(\bar x) - \di_x \eps_k(\eta) \,]\cdot(\dot x_1 - \dot x_2) - \di_y \eps_k(\eta)  \cdot  \si_{n,\,k} (y_1 -y_2) \\
& \qquad - \di_z \eps_k(\eta) \cdot \si_{n,\,k} \big[\, d_{n,\,k}(y_1 -y_2) + z_1 -z_2 + R^n_k(y_1) - R^n_k(y_2)\, \big] \\[0.6em]
%&= \ -\si_{n,\,k}  \big[\, \di_y \eps_k(\eta)  \cdot  \si_{n,\,k} (y_1 -y_2) + \di_z \eps_k(\eta) \cdot \big\{\, d_{n,\,k}(y_1 -y_2) + R^n_k(y_1) - R^n_k(y_2)\,\big\}\, \big] \\
%& \qquad -\si_{n,\,k} \cdot \di_z \eps_k(\eta) \cdot (z_1 -z_2) \\[0.2em]
\ddot{y}_1 - \ddot{y}_2 &= \ \dot x_1 - \dot x_2  \\[0.6em]
\ddot{z}_1 - \ddot{z}_2 &= \ \de_k(\dot{w}_1) - \de_k(\dot{w}_2) \\[0.4em]
&= \ \di_x \de_k(\zeta) \cdot(\dot x_1 - \dot x_2) + \di_y \de_k(\zeta) \cdot (\dot y_1 - \dot y_2 ) + \di_z \de_k(\zeta) \cdot (\dot z_1 - \dot z_2 ) \\[0.4em]
&= \ \di_x \de_k(\zeta) \cdot(\dot x_1 - \dot x_2) + \di_y \de_k(\zeta) \cdot \si_{n,\,k} (y_1 -y_2) \\
& \qquad + \di_z \de_k(\zeta) \cdot \si_{n,\,k} \big[\, d_{n,\,k}(y_1 -y_2) + z_1 -z_2 + R^n_k(y_1) - R^n_k(y_2)\, \big] \\[-1em]
\end{align*} %\msk
where $ \eta $ and $ \zeta $ are some points in the line segment between $ \dot{w}_1 $ and $ \dot{w}_2 $ in $ \Psi^n_k(B) $. The equations \eqref{distance of ddot x-1 and ddot x-2} and \eqref{ddot x distance estimation} in Lemma \ref{upper bound the distance of boxes} implies that
\begin{equation} \label{ddot x distance upper bound}
\begin{aligned}
| \, \ddot x_1 - \ddot x_2 | \leq |\,f'_k(\bar x) - \di_x \eps_k(\eta) \,| \cdot | \, \dot x_1 - \dot x_2 | + C_2 \, \si^{n-k}\big[\, b_1^{2^k} + \bar \eps^{2^k}\si^{n-k} + \bar \eps^{2^k}\bar \eps^{2^n} \,\big] 
\end{aligned} \msk
\end{equation}
and the equations \eqref{distance of ddot z-1 and ddot z-2} and \eqref{ddot z distance estimation} in the same Lemma implies that
\msk
\begin{equation} \label{ddot z distance upper bound}
\begin{aligned}
| \, \ddot z_1 - \ddot z_2 | \leq |\,\di_x \de_k(\zeta) | \cdot | \, \dot x_1 - \dot x_2 | + C_3\,\si^{n-k} \big[\, \si^{n-k} b_2^{2^k} + b_2^{2^k} \bar \eps^{2^n} \,\big] .
\end{aligned} \msk
\end{equation}
%Recall the coordinate change map $ \Psi^k_0 $. 
Then the difference of each coordinates of $ \Psi^k_0(\ddot w_1) $ and $ \Psi^k_0(\ddot w_2) $ as follows
\msk
\begin{align*}
\dddot x_1 - \dddot x_2 &= \ \pi_x \circ \Psi^k_0(\ddot w_1) - \pi_x \circ \Psi^k_0(\ddot w_2) \\[0.4em]
&= \ \alpha_{k,\,0} \big[\,(\ddot x_1 + S^k_0( \ddot w_1)) - (\ddot x_2 + S^k_0( \ddot w_2)) \,\big] \\[0.2em]
&\qquad + \si_{k,\,0}\, \big[\, t_{k,\,0}\,(\ddot y_1 - \ddot y_2 ) + u_{k,\,0} \big( \ddot z_1 -\ddot z_2 + R^k_0( \ddot y_1) - R^k_0( \ddot y_2) \big)\, \big] \\[0.4em]   \numberthis   \label{dddot x distance upper bound} 
&= \ \alpha_{k,\,0} \big[\, v_*'(\bar x) + O(\bar \eps + \rho^k) \,\big](\ddot x_1 - \ddot x_2) + \si_{k,\,0} \cdot u_{k,\,0}\,( \ddot z_1 -\ddot z_2) \\[0.2em]  
&\qquad + \si_{k,\,0}\, \big[\, t_{k,\,0}\,(\dot x_1 - \dot x_2 ) + u_{k,\,0} \big( R^k_0(\dot x_1) - R^k_0( \dot x_2) \big)\, \big] \\[0.5em]   \numberthis   \label{dddot y distance upper bound} 
\dddot y_1 - \dddot y_2 &= \ \si_{k,\,0}\,(\ddot y_1 - \ddot y_2 ) = \si_{k,\,0}\,(\dot x_1 - \dot x_2 )   \\[0.5em]
\dddot z_1 - \dddot z_2 &= \ \pi_z \circ \Psi^k_0(\ddot w_1) - \pi_z \circ \Psi^k_0(\ddot w_2) \\[0.3em]
&= \ \si_{k,\,0}\,(\ddot z_1 - \ddot z_2 ) +  \si_{k,\,0} \big[\, d_{k,\,0}(\ddot y_1 - \ddot y_2) + R^n_k(\ddot y_1) - R^n_k(\ddot y_2)\, \big] 
\\[0.3em]    \numberthis   \label{dddot z distance upper bound}
&= \ \si_{k,\,0}\,(\ddot z_1 - \ddot z_2 ) + \si_{k,\,0} \big[\, d_{k,\,0}(\dot x_1 - \dot x_2) + R^n_k(\dot x_1) - R^n_k(\dot x_2)\, \big] . \\[-1em]
\end{align*} %\msk
Let us calculate a upper bound of the distance, $ |\, \dddot w_1 - \dddot w_2 | $. Applying the estimation \eqref{dddot x distance upper bound}, \eqref{dddot y distance upper bound}  and \eqref{dddot z distance upper bound}, we obtain that \ssk
\begin{align*}
& \quad \ \ |\, \dddot w_1 - \dddot w_2 | \leq \ |\,\dddot x_1 - \dddot x_2 | + |\,\dddot y_1 - \dddot y_2 | + |\,\dddot z_1 - \dddot z_2 | \\[0.4em]
& \leq \ \big|\, \alpha_{k,\,0} \big[\, v_*'(\bar x) + O(\bar \eps + \rho^k) \,\big](\ddot x_1 - \ddot x_2) + \si_{k,\,0} \cdot u_{k,\,0}\,( \ddot z_1 -\ddot z_2) \\[0.2em]
&\qquad + \si_{k,\,0}\, \big[\, t_{k,\,0}\,(\dot x_1 - \dot x_2 ) + u_{k,\,0} \big( R^k_0(\dot x_1) - R^k_0( \dot x_2) \big)\, \big] \:\!\big| %+ \big|\,\si_{k,\,0}\,(\dot x_1 - \dot x_2 ) \,\big| 
\\[0.2em]
&\qquad + \big|\,\si_{k,\,0}\,(\dot x_1 - \dot x_2 ) \,\big| + \big|\,\si_{k,\,0}\,(\ddot z_1 - \ddot z_2 ) + \si_{k,\,0} \big[\, d_{k,\,0}(\dot x_1 - \dot x_2) + R^n_k(\dot x_1) - R^n_k(\dot x_2)\, \big] \:\!\big| \\[0.4em]
\quad \ \; & \leq \ \big|\,\alpha_{k,\,0} \big[\, v_*'(\bar x) + O(\bar \eps + \rho^k) \,\big]\cdot |\,f'_k(\bar x) - \di_x \eps_k(\eta) \,| \cdot | \, \dot x_1 - \dot x_2 | \\[0.3em]
&\qquad + \big|\,\alpha_{k,\,0} \big[\, v_*'(\bar x) + O(\bar \eps + \rho^k) \,\big]\cdot C_2 \, \si^{n-k}\big[\, b_1^{2^k} + \bar \eps^{2^k}\si^{n-k} + \bar \eps^{2^k}\bar \eps^{2^n} \,\big]\:\!\big| \\[0.3em] 
&\qquad + \big|\,\si_{k,\,0}\, [\,1 + |\;\! t_{k,\,0}| + |\;\! d_{k,\,0}| \,]\,(\dot x_1 - \dot x_2 ) \,\big| 
%\\ &\qquad 
+ \big|\,\si_{k,\,0}\, [\,1 + |\;\! u_{k,\,0}| \,] \:\! \big| \cdot \big| \,(R^n_k)'(\widetilde x) \cdot (\dot x_1 - \dot x_2) \big| \\[0.3em]
&\qquad + \big|\,\si_{k,\,0}\, [\,1 + |\;\! u_{k,\,0}| \,]\:\!\big| \cdot \big[\,\big|\, \di_x \de_k(\zeta)  \cdot 
(\dot x_1 - \dot x_2) \big| + C_3\,\si^{n-k} \big( \si^{n-k} b_2^{2^k} + b_2^{2^k} \bar \eps^{2^n} \big)\:\!\big]  %\\[0.5em]
%& \leq \ C_1\, \si^{2k} \big|\,[\,f'(\bar x) - \di_x \eps_k(\eta) \,](\dot x_1 - \dot x_2) - \di_y \eps_k(\eta)  \cdot  \si_{n,\,k} (y_1 -y_2) \\
%& \qquad \quad - \ \di_z \eps_k(\eta) \cdot \si_{n,\,k} \big[\, d_{n,\,k}(y_1 -y_2) + z_1 -z_2 + R^n_k(y_1) - R^n_k(y_2)\, \big] \,\big| \\
%& \qquad + \ C_2\, \big|\,\si^k(\dot x_1 - \dot x_2) \,\big| + C_3\, \big|\,\si^k \bar \eps^{2^k}(\dot x_1 - \dot x_2) \,\big| \\
%& \qquad + \ C_4\,\si^k \big|\,\di_x \de_k(\zeta) \cdot(\dot x_1 - \dot x_2) + \di_y \de_k(\zeta) \cdot \si_{n,\,k} (y_1 -y_2) \\
%& \qquad \quad + \ \di_z \de_k(\zeta) \cdot \si_{n,\,k} \big[\, d_{n,\,k}(y_1 -y_2) + z_1 -z_2 + R^n_k(y_1) - R^n_k(y_2)\, \big] \,\big| \\[-0.8em]
\end{align*} %\msk
After factoring out $ | \;\! \dot x_1 - \dot x_2 | $\,, this inequality continues as follows \ssk
\begin{align*}
& \leq \ C_4 \,\si^k |\,\dot x_1 - \dot x_2 | + C_5 \, \si^{2k}\si^{n-k}\big[\, b_1^{2^k} + \bar \eps^{2^k}\si^{n-k} + \bar \eps^{2^k}\bar \eps^{2^n} \,\big] \\
& \qquad + C_6\,\si^k\si^{n-k} \big[\, \si^{n-k} b_2^{2^k} + b_2^{2^k} \bar \eps^{2^n} \:\!\big] \\[0.5em]
(*)\ \ & \leq \ C_7\,\si^k \big[\, \si^{2(n-k)} + \si^{n-k} ( b_1^{2^k} + \bar \eps^{2^k} \bar \eps^{2^n} ) \,\big]  \\[0.2em]
& \qquad + C_5 \, \si^{2k}\si^{n-k}\big[\, b_1^{2^k} + \bar \eps^{2^k}\si^{n-k} + \bar \eps^{2^k}\bar \eps^{2^n} \,\big] + C_6\,\si^k\si^{n-k} \big[\, \si^{n-k} b_2^{2^k} + b_2^{2^k} \bar \eps^{2^n} \:\!\big] \hspace{0.85in}
%\\ & \qquad 
%+\,  C_6\, \si^{2k}\si^{n-k} \Big|\, \di_y \eps_k(\eta) + \di_z \eps_k(\eta) \, \Big[\,d_{n,\,k} + \frac{R^n_k(y_1) - R^n_k(y_2)}{y_1 -y_2} \,\Big] \Big| \\
%& \qquad + \ C_7\, \si^{k}\si^{n-k} \Big|\, \di_y \de_k(\zeta) + \di_z \de_k(\zeta) \, \Big[\,d_{n,\,k} + \frac{R^n_k(y_1) - R^n_k(y_2)}{y_1 -y_2} \,\Big] \Big| \\
%& \qquad + \ C_8\,\big[\,\si^{2k}\si^{n-k} \| \di_z \eps_k \| + \si^{k}\si^{n-k} b_2^{2^k} \,\big]\,|\,z_1 - z_2 | \\[0.5em]
%(*)\quad & \leq \ C_9\,\si^k \big[\,\si^{2(n-k)} + \si^{n-k} b_1^{2^k} \,\big] 
%\\ & \qquad
% + C_{10}\, \si^{2k}\si^{n-k} b_1^{2^k} +  C_{11}\,\si^{k}\si^{n-k} \si^n b_2^{2^k} + C_{12}\,\si^{k}\si^{n-k} \bar \eps^{2^n} \\[0.2em]
%& \leq \ C_{13}\,\si^k \big[\,\si^{2(n-k)} + \si^{n-k} b_1^{2^k} \,\big]  \\[-1em]
\end{align*} %\msk
for some positive constants, $ C_j $, $ 2 \leq j \leq 7 $ which are independent of $ k $ and $ n $. The second last line, $ (*) $ holds by the estimation \eqref{distance of x coordinate under Psi-n-k} and $ \| (R^n_k)'\| $ at the end of Section \,\ref{Horizontal overlap}. %Proposition \ref{exponential smallness of R-n-k} and Proposition \ref{formal expression of pi-z Psi difference}.
Observe that $ \si^n \gg \bar \eps^{2^n} $ for all big enough $ n $. Then the above estimation continues
\msk
\begin{align*}
 & \leq \ C_7\,\si^k \big[\, \si^{2(n-k)} + \si^{n-k} ( b_1^{2^k} + \bar \eps^{2^k} \bar \eps^{2^n} ) \,\big]  \\%[0.2em] 
 & \qquad 
+ C_5 \, \si^{2k}\si^{n-k}\big[\, b_1^{2^k} + \bar \eps^{2^k}\si^{n-k} + \bar \eps^{2^k}\bar \eps^{2^n} \,\big] + C_8\,\si^k\si^{2(n-k)} b_2^{2^k} \\[0.3em]
 & \leq \ \big( C_7 + C_5\,\si^k\bar \eps^{2^k} + C_6\,b_2^{2^k} \big)\, \si^k \si^{2(n-k)} + \big(C_7 + C_5\,\si^k)\, \si^k\si^{n-k}b_1^{2^k} \hspace{1.3in} \\
& \qquad + \big(C_7 + C_5\,\si^k \big)\, \si^k \si^{n-k} \bar \eps^{2^k}\bar \eps^{2^n} \\[-1em]
\end{align*} %\bsk
for some positive constant $ C_8 $. Moreover, if $ n \geq k +A $ where $ A $ is the number defined in Proposition \ref{estimation of t-n-k by b-1}, then 
$$ b_1^{2^k} \gtrsim \bar \eps^{2^k}\bar \eps^{2^n} . $$
Hence,
\begin{equation*}
\diam (B^{n}_{{\bf w}}) \leq C \, \big[\,\si^k \si^{2(n-k)} + \si^k \si^{n-k} b_1^{2^k} \,\big]
\end{equation*}
where $ {\bf w} = v^kc\,v^{n-k-1} \in W^n $ for some $ C>0 $.

\end{proof}

\begin{rem}
Lemma \ref{lower bounds of the diameter of boxes} and Lemma \ref{upper bound of the distance of two points} implies the lower and the upper bounds of $ \diam B_{\bf w} $ where $ B_{\bf w} = \Psi^k_0 \circ F_k \circ \Psi^n_k (B(R^nF)) $ as follows
\begin{equation*}
C_0\,\si^k | \;\! \dot x_1 - \dot x_2 | \leq \diam B_{\bf w} \leq C_1 \,\si^k |\;\! \dot x_1 - \dot x_2 |
\end{equation*}
for every big enough $ k \in \N $, that is, \,$ \diam B_{\bf w} \asymp \si^k |\;\! \dot x_1 - \dot x_2 | $.
\end{rem}

\msk

\subsection{Non rigidity on critical Cantor set with respect to $ b_1 $}
Recall that $ b_1 $ is $ b_F/b_2 $ where $ b_F $ is the average Jacobian of $ F $ and $ b_2 $ is the number defined in Proposition \ref{asymptotic of di-z de-n in Delta}. The number $ \widetilde{b}_1 $ is defined by the similar way for the map $ \widetilde{F} $.
\begin{thm} \label{Non rigidity with b-1}
Let H\'enon-like maps $ F $ and $ \widetilde{F} $ be in $ \NN \cap \II(\bar \eps) $. % and the third coordinate function of $ F $ and $ \widetilde{F} $ be $ \de $ and $ \widetilde{\de} $ respectively.
Let $ \phi \colon \OO_{\widetilde{F}} \ra \OO_F $ be a homeomorphism \ssk which conjugate $ F_{\,\OO_F} $ and $ \widetilde{F}_{\,\OO_{\widetilde{F}}} $ and $ \phi(\tau_{\widetilde{F}}) = \tau_{F} $. If \,$ b_1 > \widetilde{b}_1 $, \ssk then the H\"older exponent of $ \phi $ is not greater than $ \displaystyle{\frac{1}{2} \left(1 + \frac{\log b_1}{ \log \widetilde{b}_1} \right)} $.
\end{thm}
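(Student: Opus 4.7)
The plan is to test the H\"older condition on $\phi$ at a carefully chosen sequence of pieces of the two critical Cantor sets, exploiting the gap between $b_1$ and $\widetilde b_1$. First I would observe that $\phi$ sends pieces to pieces with matching addresses: since $\phi$ is a topological conjugacy between $F|_{\OO_F}$ and $\widetilde F|_{\OO_{\widetilde F}}$, both restrictions are conjugate to the same dyadic adding machine whose orbits are coded by words ${\bf w}\in W^n$, and $\phi(\tau_{\widetilde F})=\tau_F$ fixes the tip, it follows that $\phi\bigl(\OO_{\widetilde F,{\bf w}}\bigr)=\OO_{F,{\bf w}}$ for every ${\bf w}\in W^n$, where I set $\OO_{G,{\bf w}} := B^n_{\bf w}(G)\cap \OO_G$.

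Next, for each large $k$ I will pick $n=n(k)$ with $n-k\ge A$ (the constant from Proposition \ref{estimation of t-n-k by b-1}) so that
\begin{equation*}
\sigma^{n-k} \;\asymp\; \widetilde b_1^{\,2^k};
\end{equation*}
this is possible since $\sigma^m$ realises every scale up to a bounded factor as $m$ varies. Set ${\bf w}(k)=v^k c\,v^{n-k-1}\in W^n$. Since $b_1>\widetilde b_1$, one has $b_1^{2^k}\gg \widetilde b_1^{2^k}\asymp \sigma^{n-k}$, so the $b_1$-term dominates the $\sigma^{2(n-k)}$-term in the lower bound of Lemma \ref{lower bounds of the diameter of boxes} (which is actually attained by points of $\OO_{R^nF}$, by inspection of its proof together with the remark following it); this gives
\begin{equation*}
\diam \OO_{F,{\bf w}(k)} \;\gtrsim\; \sigma^k \sigma^{n-k}\,b_1^{2^k} \;\asymp\; \sigma^k\, \widetilde b_1^{\,2^k}\, b_1^{2^k}.
\end{equation*}
For $\widetilde F$, the same choice makes both terms in Lemma \ref{upper bound of the distance of two points} comparable, so
\begin{equation*}
\diam \OO_{\widetilde F,{\bf w}(k)} \;\le\; \diam B^n_{{\bf w}(k)}(\widetilde F) \;\lesssim\; \sigma^k \sigma^{2(n-k)} \;\asymp\; \sigma^k\, \widetilde b_1^{\,2\cdot 2^k}.
\end{equation*}

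Then I would invoke the supposed $\alpha$-H\"older bound $\diam \OO_{F,{\bf w}(k)} \le M\bigl(\diam \OO_{\widetilde F,{\bf w}(k)}\bigr)^\alpha$. Substituting the two estimates, taking logarithms, dividing by $2^k$, and letting $k\to\infty$ (so that the $k\log\sigma$ and constant terms are absorbed), the inequality collapses to
\begin{equation*}
\log\widetilde b_1 + \log b_1 \;\le\; 2\alpha \log\widetilde b_1.
\end{equation*}
Since $\log\widetilde b_1<0$, dividing by $2\log\widetilde b_1$ reverses the inequality and yields exactly $\alpha \le \tfrac12\bigl(1 + \tfrac{\log b_1}{\log\widetilde b_1}\bigr)$.

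The main obstacle I anticipate is the first step, the verification that the conjugacy respects the combinatorial address of every piece $B^n_{\bf w}$ (not merely the $F$-orbit combinatorics on $\OO_F$); this is where the hypothesis $\phi(\tau_{\widetilde F})=\tau_F$ is essential, because it singles out the ``critical'' branch $v$ in the adding machine coding. Once this is in place, the remainder is a direct comparison of the diameter asymptotics established in the previous sections, with the single nontrivial input being the optimal choice of scale $\sigma^{n-k}\asymp \widetilde b_1^{\,2^k}$ that maximises the divergence between the $F$- and $\widetilde F$-pieces.
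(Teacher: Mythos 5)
Your proposal is correct and follows essentially the same route as the paper: choose $n=n(k)$ so that $\sigma^{n-k}\asymp\widetilde b_1^{\,2^k}$, bound $\diam\OO_{F,{\bf w}(k)}$ from below via Lemma \ref{lower bounds of the diameter of boxes} and $\diam\OO_{\widetilde F,{\bf w}(k)}$ from above via Lemma \ref{upper bound of the distance of two points}, insert into the H\"older inequality, and take logs. The one point you make explicit that the paper leaves implicit is that the conjugacy $\phi$ with $\phi(\tau_{\widetilde F})=\tau_F$ carries pieces to pieces with matching addresses, which is indeed the reason the two diameter estimates can be compared via the H\"older condition; this is a useful clarification but not a change of method.
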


\begin{proof}
Let two points $ w_1 $ and $ w_2 $ be in $ B^1_v(R^nF) $ and $ B^1_c(R^nF) $ respectively. Similarly, assume that $ \widetilde {w}_1 $ and $ \widetilde {w}_2 $ are in $ B(R^n\widetilde{F}) $. Let us define
$ \dddot w_j = \Psi^k_0 \circ F_k \circ \Psi^n_k (w_j) $ for $ j=1,2 $. The points $ \dddot {\widetilde {w}}_1 $ and $ \dddot {\widetilde {w}}_2 $ are defined by the similar way. For sufficiently large $ k \in \N $, let us choose $ n $ depending on $ k $ which satisfies the following inequality
\begin{equation*}
\si^{n-k+1} \leq \widetilde{b}_1^{2^k} < \si^{n-k} .
\end{equation*}
Observe that $ b_1^{2^k} \gg \widetilde{b}_1^{2^k} $.
By Lemma \ref{lower bounds of the diameter of boxes} and Lemma \ref{upper bound of the distance of two points}, we have the following inequalities
\msk
\begin{align*}
\dist(\dddot {\widetilde {w}}_1, \dddot{\widetilde{w}}_2) &\leq \ C_0 \, \Big[\,\si^k \si^{2(n-k)} + \si^k \si^{n-k} \,\widetilde{b}_1^{2^k} \,\Big] \ \ \leq \ C_1 \,\si^k \,\widetilde{b}_1^{2^k}\, \widetilde{b}_1^{2^k}  \\[0.2em]
\dist(\dddot w_1, \dddot w_2) &\geq \ \big| \, C_2 \, \si^k \si^{2(n-k)} - C_3\, \si^k \si^{n-k}{b}_1^{2^k} \big| \geq \ C_4 \, \si^k \,\widetilde{b}_1^{2^k}\, b_1^{2^k} \\[-1em]
\end{align*}  %\msk
for some positive $ C_j $ where $ j =0,1,2,3 $ and $ 4 $. %\ssk \\
\nin H\"older continuous function, $ \phi $ with the H\"older exponent $ \alpha $ has to satisfy 
\begin{equation*}
\dist(\dddot w_1, \dddot w_2) \leq C \, \big( \dist(\dddot{\widetilde{w}}_1, \dddot{\widetilde{w}}_2) \big)^{\alpha}
\end{equation*}
for some $ C>0 $. Then we see that
\begin{equation*}
\si^k \,\widetilde{b}_1^{2^k}\, {b}_1^{2^k} \leq C \, \Big( \si^k \,\widetilde{b}_1^{2^k}\, \widetilde b_1^{2^k} \Big)^{\alpha}
\end{equation*}
\nin Take the logarithm both sides and divide them by $ 2^k $. After passing the limit, divide both sides by the negative number, $ 2\log \widetilde{b}_1 $. Then the desired upper bound of H\"older exponent is obtained
\begin{align*}
k \log \si + 2^k \log \widetilde{b}_1 + 2^k \log {b}_1 &\leq \log C + \alpha \:\! \big( k \log \si + 2^k \log \widetilde{b}_1 + 2^k \log \widetilde b_1 \big) \\
\frac{k}{2^k} \log \si + \log \widetilde{b}_1 + \log {b}_1 &\leq \frac{1}{2^k} \log C + \alpha \left( \frac{k}{2^k} \log \si + \log \widetilde{b}_1 + \log \widetilde b_1 \right) \\
\log \widetilde{b}_1 + \log {b}_1 &\leq \alpha \cdot 2\log \widetilde b_1 \\
\alpha &\leq \frac{1}{2} \left(1 + \frac{\log b_1}{ \log \widetilde{b}_1} \right) . \\[-1em]
\end{align*}
\end{proof} \msk

\nin In renormalization theory of two dimensional H\'enon-like map, the answer of rigidity problem with average Jacobian is unknown. In other words, the best regularity which the conjugation $ \phi $ should satisfy is not known yet where $ b_F = b_{\widetilde{F}} $ for two dimensional H\'enon-like maps $ F $ and $ \widetilde{F} $. However, the average Jacobian of three dimensional H\'enon-like map in $ \NN \cap \II(\bar \eps) $ less affects rigidity than $ b_1 $. Moreover, in higher dimension, we do not expect rigidity with average Jacobian between Cantor attractors.
\msk
\begin{example}
Let us consider a map $ F $ in $ \II(\bar \eps) $ as follows
\begin{equation*}
F(w) = (f(x) - \eps(x,y),\, x,\, \de(z)) .
\end{equation*}
We call the three dimensional H\'enon-like map satisfying $ \de(w) \equiv \de(z) $ a {\em trivial extension} of two dimensional H\'enon-like map. Let the set of these maps be $ \TT $. %Notify that $ \TT \cap \II_B(\bar \eps) \subset \NN \cap \II_B(\bar \eps) $ and $ \TT \cap \II_B(\bar \eps) $ a space which is invariant under renormalization. $ \TT $ is also contained in the set of toy model maps. 
If $ F \in \TT \cap \II(\bar \eps) $, then the $ n^{th} $ renormalized map of $ F $, $ F_n \equiv R^nF $ is as follows
\msk
\begin{equation*}
\begin{aligned}
F_n(w) = \big(\,f_n(x) - a(x)\,b_1^{2^n}y\,(1 +O(\rho^n)),\ x,\ b_2^{2^n}z\,(1 +O(\rho^n)) \big) 
\end{aligned} \msk
\end{equation*} 
where $ b_1 $ is the average Jacobian of two dimensional map, $ \pi_{xy} \circ F $ and $ b_2 = b_F/b_1 $ for some $ 0<\rho <1 $. Let $ \widetilde{F} $ be another map in $ \TT \cap \II(\bar \eps) $ with the corresponding numbers $ \widetilde{b}_1 $, $ \widetilde{b} $ and $ \widetilde{b}_2 $. By Theorem \ref{Non rigidity with b-1}, if $ b_1 > \widetilde{b}_1 $, the upper bound of H\"older exponent is 
$$ \frac{1}{2} \left(1 + \frac{\log b_1}{ \log \widetilde{b}_1} \right) $$
Let $ \de $ and $ \widetilde{\de} $ be the third coordinate map of $ F $ and $ \widetilde{F} $ respectively. 
%Let us consider the average Jacobian $ b $ and $ \widetilde{b} $. 
Recall that $ b_1 b_2 = b_F $ for every map $ F \in \TT \cap \II(\bar \eps) $ and $ b_2 $ is the contracting rate along the third coordinate. The condition, $ b_2 \neq \widetilde{b}_2 $ may require non rigidity of homeomorphic conjugacy between critical Cantor sets of $ F $ and $ \widetilde{F} $ even if $ b_F = b_{\widetilde{F}} $. Assume that 
$$ b_1b_2 = b = \widetilde{b} = \widetilde{b}_1 \widetilde{b}_2 $$ 
Thus the condition $ b_2 \neq \widetilde{b}_2 $ implies either $ b_1 > \widetilde{b}_1 $ or $ b_1 < \widetilde{b}_1 $. Then Theorem \ref{Non rigidity with b-1} implies the non rigidity between Cantor attractors of $ F $ and $ \widetilde{F} $.
\end{example} \msk
%\nin Theorem \ref{Non rigidity with b-1} holds and the conjugacy between third coordinate map is also H\"older map because $ \de $ and $ \widetilde{\de} $ is asymptotically linear map with different contracting rates. Then the upper bound of H\"older exponent in Theorem \ref{Non rigidity with b-1} might not be sharp even though the average Jacobian of $ F $ and $ \widetilde{F} $ are same. Then %the average Jacobian of the 
%three dimensional H\'enon-like maps in $ \NN \cap \II_B(\bar \eps) $ is not %corresponding 
%invariant with respect to the average Jacobian %of two dimensional H\'enon-like map 
%in the sense of the critical Cantor set geometry.

%\newpage

\bsk

%\appendix

\titleformat{\section}[display]{\normalfont\Large\bfseries}{Appendix~\Alph{section}}{12pt}{\Large}

\begin{appendices}

%\bsk

\section{Recursive formula of $ \Jac R^nF $}

%%%%%%%%%%%%%%%%%%%%%%%%%%%%%%%%%%%%%%%
Recall the definition of $ H $ and $ H^{-1} $
\msk
\begin{equation*}
\begin{aligned}
H(x,y,z) &= (f(x) - \eps(w),\ y,\ z - \de(y,f^{-1}(y),0)) \\
H^{-1}(x,y,z) &= (\phi^{-1}(w),\ y,\ z + \de(y,f^{-1}(y),0)) .
\end{aligned} %\msk
\end{equation*}
Thus $ \phi^{-1}(x,y,z) $ is the straightening map satisfying $ \phi^{-1} \circ H(w) = x $.
$$ f \circ \phi^{-1}(w) - \eps \circ H^{-1}(w) = x . $$
Then
$$ \phi^{-1}(w) = f^{-1}(x + \eps \circ H(w)) $$
\nin Recall $ \eps \circ H^{-1}(w) = \eps ( \phi^{-1}(w),\ y,\ z + \de(y, f^{-1}(y), 0)) $. Then by the chain rule, each partial derivatives of $ \phi^{-1} $ is as follows \msk
\begin{align*}
\di_x \phi^{-1}(w) &= \ (f^{-1})'(x + \eps \circ H^{-1}(w)) \cdot \big[\, 1 + \di_x \eps \circ H^{-1}(w) \cdot \di_x \phi^{-1}(w) \,\big] \\[0.4em]
\di_y \phi^{-1}(w) &= \ (f^{-1})'(x + \eps \circ H^{-1}(w)) \\[-0.3em]
& \quad \cdot \big[\, \di_x \eps \circ H^{-1}(w) \cdot \di_y \phi^{-1}(w) + \di_y \eps \circ H^{-1}(w) + \di_z \eps \circ H^{-1}(w) \cdot \frac{d}{dy}\, \de(y,f^{-1}(y),0) \,\big] \\[0.2em]
\di_z \phi^{-1}(w) &= \ (f^{-1})'(x + \eps \circ H^{-1}(w)) \cdot \big[\, \di_x \eps \circ H^{-1}(w) \cdot \di_z \phi^{-1}(w) + \di_z \eps \circ H^{-1}(w) \,\big] . \\[-1em]
\end{align*} %\msk
Then
% \label{partial derivatives of phi-1}
\begin{align*}
\di_x \phi^{-1}(w) &= \ \frac{(f^{-1})'(x + \eps \circ H^{-1}(w))}{1 - (f^{-1})'(x + \eps \circ H^{-1}(w)) \cdot \di_x \eps \circ H^{-1}(w)} \\ %[0.3em]
\di_y \phi^{-1}(w) &= \ \di_x \phi^{-1}(w) \cdot \big[\,\di_y \eps \circ H^{-1}(w) + \di_z \eps \circ H^{-1}(w) \cdot \frac{d}{dy}\, \de(y,f^{-1}(y),0)\,\big] \\ %[0.3em]
%\di_y \phi^{-1}(w) &= \ \frac{(f^{-1})'(x + \eps \circ H^{-1}(w)) \cdot \big[\,\di_y \eps \circ H^{-1}(w) + \di_z \eps \circ H^{-1}(w) \cdot \frac{d}{dy}\, \de(y,f^{-1}(y),0)\,\big]}{1 - (f^{-1})'(x + \eps \circ H^{-1}(w)) \cdot \di_x \eps \circ H^{-1}(w)} \\[0.3em]
\di_z \phi^{-1}(w) &= \ \di_x \phi^{-1}(w) \cdot \di_x \eps \circ H^{-1}(w) \ .\\[-1em]
%\di_z \phi^{-1}(w) &= \ \frac{(f^{-1})'(x + \eps \circ H^{-1}(w)) \cdot \di_z \eps \circ H^{-1}(w)}{1 - (f^{-1})'(x + \eps \circ H^{-1}(w)) \cdot \di_x \eps \circ H^{-1}(w)} \ .\\[-1em]
\end{align*} %\bsk
%%%%%%%%%%%%%%%%%%%%%%%%%%%%%%%%%%%%%%%%%
%\nin On the above equations let $ (f^{-1}_{\eps})'(x) $ be $ \di_x \phi^{-1}(w) $. 

%\begin{align}
%(f^{-1}_{\eps})'(x) = \frac{(f^{-1})'(x + \eps \circ H^{-1}(w))}{1- (f^{-1})'(x + \eps \circ H^{-1}(w)) \cdot \di_x \eps \circ H^{-1}(w)}
%\end{align}
%\ssk \\

%The Jacobian of $ R^nF $ can be expressed as the Jacobian of $ R^{n-1}F $ using recursive formula. For this, each partial derivatives of $ \eps_n $ and $ \de_n $ should be expressed by the partial derivatives of $ \eps_{n-1} $ and $ \de_{n-1} $.
\nin Recall {\em pre-renormalization} of $ F $, $ PRF $ is defined as follows
\begin{equation*}
\begin{aligned}
PRF = H \circ F^2 \circ H^{-1}
\end{aligned}
\end{equation*}
where $ H(w) = (f(x) - \eps(w),\ y ,\ z - \de (y ,f^{-1}(y),0))  $. Recall the renormalized map $ RF $ is defined as $ \La \circ PRF \circ \La^{-1} $ where $ \La(w) = (sx,sy,sz) $ for the appropriate number $ s < -1 $ from the renormalized one dimensional map, $ f(x) $. Denote $ \si_0 = 1/s $. 
%\ssk \\
Let the first coordinate map of $ H^{-1}(w) $ be $ \phi^{-1}(w) $. Then
\begin{equation*}
H^{-1}(w) = (\phi^{-1}(w),\ y,\ z + \de (y ,f^{-1}(y),0)) .
\end{equation*}
By the direct calculation $ PRF $ is as follows.
\begin{equation*}
PRF(w) = ( f(f(x) - \eps \circ F \circ H^{-1}(w)) - \eps \circ F^2 \circ H^{-1}(w),\; x,\; \de \circ F \circ   H^{-1}(w) - \de (x, f^{-1}(x),0))
\end{equation*}
Let the perturbed part of the first coordinate map of $ PRF $ be Pre\;$ \eps_1(w) $. Let the third coordinate map of $ PRF $ be Pre\;$ \de_1(w) $. Moreover, Pre\;$ \eps_1(w) $ and Pre\;$ \de_1(w) $ is defined as the corresponding parts of $ PR^k\!F $ for each $ k \in \N $. Then relations between Pre\;$ \eps_k(w) $ and $ \eps_k(w) $ and between Pre\;$ \de_k(w) $ and $ \de_k(w) $ respectively are as follows
\begin{equation*}
\begin{aligned}
\textrm{Pre}\; \eps_k(w) = \si_{k-1} \cdot \eps_k \circ \left(\frac{w}{\si_{k-1}} \right) \quad \text{and} \quad 
\textrm{Pre}\; \de_k(w) = \si_{k-1} \cdot \de_k \circ \left(\frac{w}{\si_{k-1}} \right) .
\end{aligned} \msk 
\end{equation*}

\begin{lem} \label{appendix - Dde recursive form}
Let $ F $ be a renormalizable three dimensional H\'enon-like map. Let $ \de_1 $ be the third coordinate of $ RF $, namely, $ \pi_z \circ RF $. Then \msk
\begin{align*}
\di_x \de_1(w) &= \ \big[\,\di_y \de \circ \psi^1_c(w) + \di_z \de \circ \psi^1_c(w) \cdot \di_x \de \circ \psi^1_v(w) \,\big] \cdot \di_x \phi^{-1}(\si_0 w) \\
& \qquad + \di_x \de \circ \psi^1_c(w) - \frac{d}{dx}\;\de(\si_0 x,\, f^{-1}(\si_0 x),\, 0) \\[0.6em]
\di_y \de_1(w) &= \ \big[\,\di_y \de \circ \psi^1_c(w) + \di_z \de \circ \psi^1_c(w) \cdot \di_x \de \circ \psi^1_v(w) \,\big] \cdot \di_y \phi^{-1}(\si_0 w) \\
& \qquad + \di_z \de \circ \psi^1_c(w) \cdot \Big[\, \di_y \de \circ \psi^1_v(w) + \di_z \de \circ \psi^1_v(w) \cdot \frac{d}{dy}\;\de(\si_0 y,\, f^{-1}(\si_0 y),\, 0)\,\Big] \\[0.6em]
\di_z \de_1(w) &= \ \big[\,\di_y \de \circ \psi^1_c(w) + \di_z \de \circ \psi^1_c(w) \cdot \di_x \de \circ \psi^1_v(w) \,\big] \cdot \di_z \phi^{-1}(\si_0 w) \\[0.2em]
& \qquad + \di_z \de \circ \psi^1_c(w) \cdot \di_z \de \circ \psi^1_v(w) 
\end{align*}
\end{lem}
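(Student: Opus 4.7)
The plan is to unfold the definition $RF = \Lambda \circ H \circ F^2 \circ H^{-1} \circ \Lambda^{-1}$ on the third coordinate and then differentiate via the chain rule, keeping careful track of the factor $\sigma_0$ produced at each step. First, using the structure of $H$ (whose $z$-coordinate subtracts $\delta(y, f^{-1}(y), 0)$) and the fact that $\pi_y \circ F(w) = \pi_x(w)$ for H\'enon-like $F$, I would obtain
\begin{equation*}
\delta_1(w) \;=\; s\,\bigl[\,\delta\circ\psi^1_c(w) \;-\; \delta\bigl(\pi_x\psi^1_c(w),\,f^{-1}(\pi_x\psi^1_c(w)),\,0\bigr)\,\bigr],
\end{equation*}
where $\psi^1_c = F\circ\psi^1_v = F\circ H^{-1}\circ\Lambda^{-1}$. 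The first key step is to simplify $\pi_x\psi^1_c(w)$ using the straightening identity. Differentiating $f(\phi^{-1}(u)) = \pi_x(u) + \epsilon\circ H^{-1}(u)$ at $u = \sigma_0 w$ gives $\pi_x\psi^1_c(w) = f(\phi^{-1}(\sigma_0 w)) - \epsilon\circ\psi^1_v(w) = \sigma_0 x$. Thus the subtracted term becomes $\delta(\sigma_0 x, f^{-1}(\sigma_0 x), 0)$, depending on $x$ alone.

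Next, I would compute the Jacobian $D\psi^1_c(w)$ column by column. Writing $\psi^1_c(w) = (X,Y,Z)$ with $X = \sigma_0 x$, $Y = \phi^{-1}(\sigma_0 w)$, and $Z = \delta\circ\psi^1_v(w)$, the row for $X$ is simply $(\sigma_0, 0, 0)$ (this is exactly the content of the straightening identity), the row for $Y$ is $\sigma_0\,D\phi^{-1}(\sigma_0 w)$, and the row for $Z$ is obtained from the chain rule applied to $\delta(\phi^{-1}(\sigma_0 w),\,\sigma_0 y,\,\sigma_0 z + p_0(\sigma_0 y))$, where $p_0(u) := \delta(u, f^{-1}(u), 0)$. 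Assembling $D[\delta\circ\psi^1_c] = D\delta(\psi^1_c)\cdot D\psi^1_c$, every entry in which the second column of $D\psi^1_c$ contributes ($\sigma_0\partial_j\phi^{-1}(\sigma_0 w)$) comes multiplied by $\partial_y\delta\circ\psi^1_c$, and every entry involving the $\partial_x\delta\circ\psi^1_v$ piece of $Z$ comes multiplied by $\partial_z\delta\circ\psi^1_c$; these two terms pair up everywhere to produce the common factor
\begin{equation*}
\partial_y\delta\circ\psi^1_c(w) + \partial_z\delta\circ\psi^1_c(w)\cdot\partial_x\delta\circ\psi^1_v(w)
\end{equation*}
appearing in all three displayed formulas.

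Finally, I would collect terms. Since $s\sigma_0 = 1$, the outer factor $s$ absorbs the $\sigma_0$ produced by each single chain-rule step on the inner functions, so each of $\partial_x\delta_1$, $\partial_y\delta_1$, $\partial_z\delta_1$ emerges with no residual scalar prefactor. The subtracted one-variable term $\delta(\sigma_0 x, f^{-1}(\sigma_0 x), 0)$ contributes only to the $\partial_x$ formula, giving the explicit subtracted derivative $\tfrac{d}{dx}\delta(\sigma_0 x, f^{-1}(\sigma_0 x), 0)$. The $\partial_y$ formula acquires the extra piece $\partial_z\delta\circ\psi^1_c\cdot\partial_z\delta\circ\psi^1_v\cdot\tfrac{d}{dy}\delta(\sigma_0 y, f^{-1}(\sigma_0 y), 0)$ from differentiating the third coordinate of $\psi^1_v$ (which contains $p_0(\sigma_0 y)$), while the $\partial_z$ formula simply adds $\partial_z\delta\circ\psi^1_c\cdot\partial_z\delta\circ\psi^1_v$.

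The main obstacle is purely bookkeeping: keeping the three distinct Jacobians $D\delta(\psi^1_c)$, $DF(\psi^1_v)$, and $D\phi^{-1}(\sigma_0 w)$ straight, applying the straightening identity at the right moment so that $\partial_x X = \sigma_0$ and $\partial_y X = \partial_z X = 0$ rather than an unwieldy combination of $\partial\epsilon$ and $\partial\phi^{-1}$ terms, and recognizing the boxed common factor to present the result in exactly the stated form. Everything else reduces to a direct multi-variable chain-rule computation.
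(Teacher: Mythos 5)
Your proposal is correct and follows essentially the same route as the paper's proof in Appendix A: unfold $\delta_1$ via the pre-renormalization $\mathrm{Pre}\,\delta_1(w)=\delta\circ F\circ H^{-1}(w)-\delta(x,f^{-1}(x),0)$, use the straightening identity to show the first coordinate of $F\circ H^{-1}$ is just $x$ (so $\pi_x\psi^1_c(w)=\sigma_0 x$), and then apply the multivariable chain rule coordinate by coordinate, with the common boxed factor arising from pairing the $\partial_y\delta\circ\psi^1_c$ term coming through $\phi^{-1}$ with the $\partial_z\delta\circ\psi^1_c\cdot\partial_x\delta\circ\psi^1_v$ term coming through the third coordinate of $H^{-1}$. (One trivial wording slip: you say ``differentiating'' the straightening identity to get $\pi_x\psi^1_c(w)=\sigma_0 x$, but that step is just evaluation; the differentiation comes afterward when you read off the row $(\sigma_0,0,0)$.)
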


\begin{proof}
\nin Let us calculate the recursive formula of each partial derivatives of Pre\;$ \de_1(w) $. Let us estimate $ \di_x \big(\text{Pre}\,\de_1(w)\big) $. Then 
% \label{di-x delta-1 estimation}
\begin{align*}
&\quad \ \ {\di_x}\,(\de \circ F \circ H^{-1}(w) - \de(x, f^{-1}(x),0)) %\\ & 
= \frac{\di }{\di x}\; \de(x, \phi^{-1}(x), \de \circ H^{-1}(w)) - \frac{d}{dx}\, \de (x, f^{-1}(x),0) \\%[0.5em]
&= \ \di_x \de \circ (F \circ H^{-1}(w)) + \di_y \de \circ (F \circ H^{-1}(w)) \cdot \di_x \phi^{-1}(w) \\
&\qquad + \di_z \de \circ (F \circ H^{-1}(w)) \cdot \di_x (\de \circ H^{-1}(w)) - \frac{d}{dx}\; \de (x, f^{-1}(x),0) \\[0.5em]
&= \ \di_x \de \circ (F \circ H^{-1}(w)) + \di_y \de \circ (F \circ H^{-1}(w)) \cdot \di_x \phi^{-1}(w) \\
&\qquad + \di_z \de \circ (F \circ H^{-1}(w)) \cdot \di_x \de \circ H^{-1}(w) \cdot \di_x \phi^{-1}(w) - \frac{d}{dx}\; \de (x, f^{-1}(x),0) \\[0.5em]
&= \ \boxed{  \big[ \di_y \de \circ (F \circ H^{-1}(w)) + \di_z \de \circ (F \circ H^{-1}(w)) \cdot \di_x \de \circ H^{-1}(w) \big] } \cdot \di_x \phi^{-1}(w) \\ 
&\qquad + \di_x \de \circ (F \circ H^{-1}(w)) - \frac{d}{dx}\; \de (x, f^{-1}(x),0) . \\[-1em]
\end{align*} %\msk

\nin Let us estimate $ \di_y \big(\text{Pre}\,\de_1(w)) $. Then 
% \label{di-y delta-1 estimation}
\begin{align*}
&\quad \ \ \di_y (\de \circ F \circ H^{-1}(w) - \de(x, f^{-1}(x),0)) = \frac{\di }{\di y}\; \de(x, \phi^{-1}(x), \de \circ H^{-1}(w)) \\%[0.5em]
&= \ \di_y \de \circ (F \circ H^{-1}(w)) \cdot \di_y \phi^{-1}(w) + \di_z \de \circ (F \circ H^{-1}(w)) \cdot  \frac{\di }{\di y}\; (\de \circ H^{-1}(w)) \\[0.5em]
&= \ \di_y \de \circ (F \circ H^{-1}(w)) \cdot \di_y \phi^{-1}(w) + \di_z \de \circ (F \circ H^{-1}(w)) \\
&\qquad \cdot \Big[ \di_x \de \circ H^{-1}(w) \cdot \di_y \phi^{-1}(w) + \di_y \de \circ H^{-1}(w) + \di_z \de \circ H^{-1}(w) \cdot \frac{d}{dy}\; \de (y, f^{-1}(y),0) \Big] \\[0.5em]     \numberthis   \label{di-y delta-1 estimation}
&= \ \boxed{  \big[ \di_y \de \circ (F \circ H^{-1}(w)) + \di_z \de \circ (F \circ H^{-1}(w)) \cdot \di_x \de \circ H^{-1}(w) \big] } \cdot \di_y \phi^{-1}(w) \\
&\qquad + \di_z \de \circ (F \circ H^{-1}(w)) \cdot \Big[\di_y \de \circ H^{-1}(w) + \di_z \de \circ H^{-1}(w) \cdot \frac{d}{dy}\; \de (y, f^{-1}(y),0) \Big] . \\[-1em]
\end{align*}  %\msk

\nin Similarly, we can estimate $ \di_z \big(\text{Pre}\,\de_1(w)\big) $. Then 
\begin{align*}
&\quad \ \ \di_z (\de \circ F \circ H^{-1}(w) - \de(x, f^{-1}(x),0)) =  \frac{\di }{\di z}\; \de(x, \phi^{-1}(x), \de \circ H^{-1}(w)) \\ %[0.3em]
&= \ \di_y \de \circ (F \circ H^{-1}(w)) \cdot \di_z \phi^{-1}(w) + \di_z \de \circ (F \circ H^{-1}(w)) \cdot  \frac{\di }{\di z}\; (\de \circ H^{-1}(w)) \\[0.3em]
&= \ \di_y \de \circ (F \circ H^{-1}(w)) \cdot \di_z \phi^{-1}(w) \\ &\qquad
 + \di_z \de \circ (F \circ H^{-1}(w))% \\ &\qquad 
\cdot \big[\, \di_x \de \circ H^{-1}(w) \cdot \di_z \phi^{-1}(w) + \di_z \de \circ H^{-1}(w) \big] \\[0.5em]   \numberthis   \label{di-z delta-1 estimation}
&= \ \boxed{  \big[\, \di_y \de \circ (F \circ H^{-1}(w)) + \di_z \de \circ (F \circ H^{-1}(w)) \cdot \di_x \de \circ H^{-1}(w) \big] } \cdot \di_z \phi^{-1}(w) \\ &\qquad  
+ \di_z \de \circ (F \circ H^{-1}(w)) \cdot \di_z \de \circ H^{-1}(w) .  \\[-1em]
\end{align*}  %\msk
Since $ \psi^1_v(w) = H^{-1}(\si_0 w) $ and $ \psi^1_c(w) = F \circ H^{-1}(\si_0 w) $, the fact that $ \di_t \de_1(w) = \di_t \big(\text{Pre}\,\de_1(\si_0 w)\big) $ for $ t = x,y,z $ implies the equations in Lemma. The proof is complete.
\end{proof}
\msk
\nin Let us estimate $ \di_y \big(\text{Pre}\,\eps_1(w)\big) $. Thus we need to estimate 
\msk
\begin{align*}
\di_y(\eps \circ F \circ H^{-1}(w)) \  \text{and} \ \ \di_y(\eps \circ F^2 \circ H^{-1}(w)) . \\[-1em]
\end{align*}
firstly. Let us estimate $ \di_y(\eps \circ F \circ H^{-1}(w)) $. Then
\begin{align*}
&\quad \ \ \di_y(\eps \circ F \circ H^{-1}(w)) = \frac{\di }{\di y}\; \eps (x, \phi^{-1}(x), \de \circ H^{-1}(w)) \\%[0.5em]
&= \ \di_y \eps \circ (F \circ H^{-1}(w)) \cdot \di_y \phi^{-1}(w) + \di_z \eps \circ (F \circ H^{-1}(w)) \cdot \frac{\di }{\di y}\; (\de \circ H^{-1}(w)) \\[0.5em]
&= \ \di_y \eps \circ (F \circ H^{-1}(w)) \cdot \di_y \phi^{-1}(w) + \di_z \eps \circ (F \circ H^{-1}(w)) \\
&\qquad \cdot \Big[ \di_x \de \circ H^{-1}(w) \cdot \di_y \phi^{-1}(w) + \di_y \de \circ H^{-1}(w) + \di_z \de \circ H^{-1}(w) \cdot \frac{d}{dy}\; \de (y, f^{-1}(y),0) \Big] \\[0.5em]
&= \ \big[ \di_y \eps \circ (F \circ H^{-1}(w)) + \di_z \eps \circ (F \circ H^{-1}(w)) \cdot \di_x \de \circ H^{-1}(w) \big] \cdot \di_y \phi^{-1}(w) \\
&\qquad + \di_z \eps \circ (F \circ H^{-1}(w)) \cdot \Big[\di_y \de \circ H^{-1}(w) + \di_z \de \circ H^{-1}(w) \cdot \frac{d}{dy}\; \de (y, f^{-1}(y),0) \Big] .
\end{align*} %\msk

\nin Moreover, we have %we can express $ \di_y(\eps \circ F^2 \circ H^{-1}(w)) $ in terms of $ \di_y(\eps \circ F \circ H^{-1}(w)) $ and $ \di_y (\de \circ F \circ H^{-1}(w)) $.
\begin{align*}
&\qquad \ \di_y(\eps \circ F^2 \circ H^{-1}(w)) = \frac{\di }{\di y}\; \eps (f(x) - \eps \circ F \circ H^{-1}(w),\,x,\, \de \circ F \circ H^{-1}(w)) \\[0.3em]
&= \ -\, \di_x \eps \circ (F^2 \circ H^{-1}(w)) \cdot \frac{\di }{\di y}\;(\eps \circ F \circ H^{-1}(w)) %\\ &\qquad \quad 
+ \di_z \eps \circ (F^2 \circ H^{-1}(w)) \cdot \frac{\di }{\di y}\; (\de \circ F \circ H^{-1}(w)) . \\[-1em]
\end{align*}

\nin The map $ f'(f_{\eps}(x)) $ denote the function $ f'(f(x) - \eps \circ F \circ H^{-1}(w) - \di_x \eps \circ (F^2 \circ H^{-1}(w))) $. Then $ \di_y \big(\text{Pre}\,\eps_1(w)\big) $ can be expressed in terms of partial derivatives of $ \eps(w) $ and $ \de(w) $ as follows 
%\msk
% 
\begin{align*}
& \quad \ \  \di_y \text{Pre}\,\eps_1(w) = - \di_y \big[\,f(f(x) - \eps \circ F \circ H^{-1}(w)) - \eps \circ F^2 \circ H^{-1}(w) \big] \\[0.5em]
&= \ f'(f(x) - \eps \circ F \circ H^{-1}(w)) \cdot \di_y (\eps \circ F \circ H^{-1}(w)) + \di_y(\eps \circ F^2 \circ H^{-1}(w)) \\[0.5em]
&= \ \big[\, f'(f(x) - \eps \circ F \circ H^{-1}(w)) - \di_x \eps \circ (F^2 \circ H^{-1}(w)) \big]\cdot \di_y (\eps \circ F \circ H^{-1}(w)) \\[0.2em] 
&\qquad + \di_z\eps \circ (F^2 \circ H^{-1}(w)) \cdot \di_y (\de \circ F \circ H^{-1}(w)) \\  \numberthis  \label{di-y eps-1 estimation}
&= \ \Big[\, f'(f_{\eps}(x)) \cdot \{ \di_y \eps \circ (F \circ H^{-1}(w)) + \di_z \eps \circ (F \circ H^{-1}(w)) \cdot \di_x \de \circ H^{-1}(w) \} \\[-0.1em]
& \qquad + \di_z \eps \circ (F^2 \circ H^{-1}(w)) \cdot  \{ \; \boxed { \di_y \de \circ (F \circ H^{-1}(w)) + \di_z \de \circ (F \circ H^{-1}(w)) \cdot \di_x \de \circ H^{-1}(w) } \ \}    \Big] \\
& \qquad \quad  \cdot \di_y \phi^{-1}(w) \\
& \quad + \Big[\, f'(f_{\eps}(x)) \cdot \di_z \eps \circ (F \circ H^{-1}(w)) + \di_z \eps \circ (F^2 \circ H^{-1}(w)) \cdot \di_z \de \circ (F \circ H^{-1}(w)) \Big] \\
&\qquad \quad \cdot \Big[ \di_y \de \circ H^{-1}(w) + \di_z \de \circ H^{-1}(w) \cdot \frac{d}{dy}\; \de (y, f^{-1}(y),0) \Big] .
\end{align*} % \msk

\nin Let us estimate $ \di_z(\eps \circ F \circ H^{-1}(w)) $. Then %\msk
\begin{align*}
&\quad \ \   \di_z(\eps \circ F \circ H^{-1}(w)) =  \frac{\di }{\di z}\;\eps (x, \phi^{-1}(x), \de \circ H^{-1}(w)) \\[-0.2em]
&= \ \di_y \eps \circ (F \circ H^{-1}(w)) \cdot \di_z \phi^{-1}(w) + \di_z \eps \circ (F \circ H^{-1}(w)) \cdot  \frac{\di }{\di z}\;(\de \circ H^{-1}(w)) \\[0.1em]
&= \ \di_y \eps \circ (F \circ H^{-1}(w)) \cdot \di_z \phi^{-1}(w) \\
&\qquad  + \di_z \eps \circ (F \circ H^{-1}(w)) \cdot \big[\, \di_x \de \circ H^{-1}(w) \cdot \di_z \phi^{-1}(w) + \di_z \de \circ H^{-1}(w)   \big] \\[0.2em]
&= \ \big[\, \di_y \eps \circ (F \circ H^{-1}(w)) + \di_z \eps \circ (F \circ H^{-1}(w)) \cdot \di_x \de \circ H^{-1}(w) \big] \cdot \di_z \phi^{-1}(w) \\
&\qquad + \di_z \eps \circ (F \circ H^{-1}(w)) \cdot  \di_z \de \circ H^{-1}(w) . \\[-1em]
\end{align*}  %\msk

\nin Moreover, we can express $ \di_y(\eps \circ F^2 \circ H^{-1}(w)) $ in terms of $ \di_y(\eps \circ F \circ H^{-1}(w)) $ and $ \di_y (\de \circ F \circ H^{-1}(w)) $ as follows
\begin{align*}
&\qquad  \di_z(\eps \circ F^2 \circ H^{-1}(w)) =  \frac{\di }{\di z}\; \eps (f(x) - \eps \circ F \circ H^{-1}(w),\,x,\, \de \circ F \circ H^{-1}(w)) \\[0.3em]
&= - \di_x \eps \circ (F^2 \circ H^{-1}(w)) \cdot  \frac{\di }{\di z}\;(\eps \circ F \circ H^{-1}(w)) %\\ &\qquad \ \ 
+ \di_z \eps \circ (F^2 \circ H^{-1}(w)) \cdot  \frac{\di }{\di z}\;(\de \circ F \circ H^{-1}(w)) .
\end{align*}  %\msk

\nin Then $ \di_z \big(\text{Pre}\,\eps_1(w)\big) $ can be estimated in terms of partial derivatives of $ \eps(w) $ and $ \de(w) $
\begin{align*}
& \quad \ \ \di_z \text{Pre}\,\eps_1(w) = - \di_z \big[\,f(f(x) - \eps \circ F \circ H^{-1}(w)) - \eps \circ F^2 \circ H^{-1}(w) \big] \\[0.5em]
&= \ f'(f(x) - \eps \circ F \circ H^{-1}(w)) \cdot \di_z (\eps \circ F \circ H^{-1}(w)) + \di_z(\eps \circ F^2 \circ H^{-1}(w)) \\[0.5em]
&= \ \big[\, f'(f(x) - \eps \circ F \circ H^{-1}(w)) - \di_x \eps \circ (F^2 \circ H^{-1}(w)) \big]\cdot \di_z (\eps \circ F \circ H^{-1}(w)) \\ 
&\qquad + \di_z\eps \circ (F^2 \circ H^{-1}(w)) \cdot \di_z (\de \circ F \circ H^{-1}(w)) \\  \numberthis  \label{di-z eps-1 estimation}
&= \ \Big[\, f'(f_{\eps}(x)) \cdot \{ \di_y \eps \circ (F \circ H^{-1}(w)) + \di_z \eps \circ (F \circ H^{-1}(w)) \cdot \di_x \de \circ H^{-1}(w) \} \\
& \qquad + \di_z \eps \circ (F^2 \circ H^{-1}(w)) \cdot \{ \; \boxed{ \di_y \de \circ (F \circ H^{-1}(w)) + \di_z \de \circ (F \circ H^{-1}(w)) \cdot \di_x \de \circ H^{-1}(w) } \  \} \Big] \\
& \qquad  \cdot \di_z \phi^{-1}(w) \\
& \quad + \Big[\, f'(f_{\eps}(x)) \cdot \di_z \eps \circ (F \circ H^{-1}(w)) + \di_z \eps \circ (F^2 \circ H^{-1}(w)) \cdot \di_z \de \circ (F \circ H^{-1}(w)) \Big] \\
&\qquad \quad \cdot  \di_z \de \circ H^{-1}(w) .
\end{align*}
\ssk
\begin{lem}
Let $ F $ be an infinitely renormalizable three dimensional H\'enon-like map. Then %\ssk
\begin{align*}
%& \ 
\Jac R^nF(w) 
%\\
&= \ (f_{n-1}^{-1})'(\si_{n-1} x) \cdot f'_{n-1}(f_{n-1}(\si_{n-1} x)) \\[0.2em]
&\qquad \cdot \Jac R^{n-1}F \circ  ( H_{n-1}^{-1} (\si_{n-1} w)) \cdot \Jac R^{n-1}F \circ (F_{n-1} \circ H_{n-1}^{-1} (\si_{n-1} w)) .
\end{align*}
\end{lem}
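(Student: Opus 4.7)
The plan is to apply the chain rule for Jacobian determinants to the definition $R^nF = \Lambda_{n-1} \circ H_{n-1} \circ F_{n-1}^{\,2} \circ H_{n-1}^{-1} \circ \Lambda_{n-1}^{-1}$, where $F_{n-1} = R^{n-1}F$, and to follow the image point through the five-step chain $w \mapsto \sigma_{n-1}w \mapsto H_{n-1}^{-1}(\sigma_{n-1}w) \mapsto F_{n-1}\circ H_{n-1}^{-1}(\sigma_{n-1}w) \mapsto F_{n-1}^{\,2}\circ H_{n-1}^{-1}(\sigma_{n-1}w)$. Multiplicativity of the determinant gives
\begin{equation*}
\Jac R^nF(w) = \Jac \Lambda_{n-1} \cdot \Jac H_{n-1}\,\cdot\, \Jac F_{n-1}^{\,2}\,\cdot\, \Jac H_{n-1}^{-1}(\sigma_{n-1}w) \cdot \Jac \Lambda_{n-1}^{-1}(w),
\end{equation*}
with each intermediate Jacobian evaluated at the appropriate point. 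The linear dilations cancel, since $\Jac \Lambda_{n-1}\cdot \Jac\Lambda_{n-1}^{-1} = s_{n-1}^{3}\sigma_{n-1}^{3}=1$. Splitting $\Jac F_{n-1}^{\,2}$ one more time by the chain rule produces exactly the two $\Jac R^{n-1}F$ factors on the right-hand side of the statement.

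What remains is to identify the combined $H$-contribution
\begin{equation*}
\Jac H_{n-1}\bigl(F_{n-1}^{\,2}\circ H_{n-1}^{-1}(\sigma_{n-1}w)\bigr) \cdot \Jac H_{n-1}^{-1}(\sigma_{n-1}w)
\end{equation*}
with $(f_{n-1}^{-1})'(\sigma_{n-1}x)\cdot f_{n-1}'(f_{n-1}(\sigma_{n-1}x))$. Using the explicit horizontal-like form of $H_{n-1}$, expansion along the second row of the $3\times 3$ Jacobian matrix collapses it to $\det DH_{n-1}(u) = f'_{n-1}(\pi_x u) - \di_x\eps_{n-1}(u)$; symmetrically, $\det DH_{n-1}^{-1}(v) = \di_x\phi_{n-1}^{-1}(v)$ by the formula for $\phi_{n-1}^{-1}$ derived at the start of the appendix. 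Tracing $\pi_x$ through the compositions shows $\pi_x\circ H_{n-1}^{-1}(\sigma_{n-1}w) = \phi_{n-1}^{-1}(\sigma_{n-1}w)$ and $\pi_x \circ F_{n-1}^{\,2}\circ H_{n-1}^{-1}(\sigma_{n-1}w) = f_{n-1}(\sigma_{n-1}x) - \eps_{n-1}(F_{n-1}\circ H_{n-1}^{-1}(\sigma_{n-1}w))$, which places the evaluation points exactly where the claimed factors $f'_{n-1}(f_{n-1}(\sigma_{n-1}x))$ and $(f_{n-1}^{-1})'(\sigma_{n-1}x)$ can be read off.

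The main obstacle is the book-keeping of the $\di_x\eps_{n-1}$ correction terms in $\det DH_{n-1}$ and $\det DH_{n-1}^{-1}$, which are evaluated at two distinct arguments in the composition and therefore do not cancel by the obvious identity $\det DH_{n-1}\cdot \det DH_{n-1}^{-1}\circ H_{n-1}^{-1}\equiv 1$. The plan is to exploit the explicit formula
\begin{equation*}
\di_x\phi^{-1}_{n-1}(v) \;=\; \frac{(f_{n-1}^{-1})'(v_x + \eps_{n-1}\circ H_{n-1}^{-1}(v))}{1-(f_{n-1}^{-1})'(v_x + \eps_{n-1}\circ H_{n-1}^{-1}(v))\cdot \di_x\eps_{n-1}\circ H_{n-1}^{-1}(v)}
\end{equation*}
already recorded in the appendix, together with the identity $f'_{n-1}(\phi_{n-1}^{-1}(v)) = 1/(f_{n-1}^{-1})'(v_x + \eps_{n-1}\circ H_{n-1}^{-1}(v))$, to rearrange the product $\det DH_{n-1}\cdot \det DH_{n-1}^{-1}$ so that the numerator matches $f'_{n-1}(f_{n-1}(\sigma_{n-1}x))$ and the denominator matches $f'_{n-1}(f_{n-1}^{-1}(\sigma_{n-1}x))$. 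This will yield the closed-form recursion stated in the lemma.
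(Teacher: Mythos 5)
Your proposal is correct and takes a genuinely different route from the paper. The paper proves this lemma the hard way: it first works out explicit recursive formulas for all the partial derivatives $\di_y\eps_1,\ \di_z\eps_1,\ \di_y\de_1,\ \di_z\de_1$ in terms of partials of $\eps,\de$ (equations \eqref{di-y delta-1 estimation}, \eqref{di-z delta-1 estimation}, \eqref{di-y eps-1 estimation}, \eqref{di-z eps-1 estimation}), then substitutes them into $\Jac RF = \di_y\eps_1\,\di_z\de_1 - \di_z\eps_1\,\di_y\de_1$, introduces the auxiliary quantities $A,B,C,D$, and carries out a page-long algebraic factorisation culminating in the identity $A\cdot\di_z\de\circ(F\circ H^{-1}(\si_0 w)) - BC = f'(f_\eps(\si_0 x))\cdot\Jac F\circ(F\circ H^{-1}(\si_0 w))$. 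You instead apply multiplicativity of the determinant directly to the five-fold composition $R F = \Lambda\circ H\circ F^2\circ H^{-1}\circ\Lambda^{-1}$: the dilation factors cancel, $\Jac F^2$ splits by the chain rule into the two $\Jac F_{n-1}$ factors at exactly the right base points, and the $H$-contribution is read off from $\det DH = f' - \di_x\eps$ and $\det DH^{-1} = \di_x\phi^{-1}$. This bypasses the computation of the partials of $\eps_1,\de_1$ entirely and makes the structure of the recursion transparent. What the paper's longer route buys is re-use: it has already derived the $D\de_1$ partials for Lemma \ref{appendix - Dde recursive form} (needed to define and control the space $\NN$), so assembling the Jacobian from them is incremental. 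Your route is cleaner if the goal is only this lemma.

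One small caution, which is really a caution about the lemma's own statement rather than your argument. What the chain rule actually produces for the combined $H$-contribution is the $\eps$-perturbed quantity: the numerator $\det DH_{n-1}$ at $F_{n-1}^2\circ H_{n-1}^{-1}(\si_{n-1}w)$ is $f_{n-1}'\big(f_{n-1}(\si_{n-1}x) - \eps_{n-1}(\cdot)\big) - \di_x\eps_{n-1}(\cdot)$, not literally $f_{n-1}'(f_{n-1}(\si_{n-1}x))$, and the reciprocal factor $\di_x\phi_{n-1}^{-1}(\si_{n-1}w) = 1/\det DH_{n-1}\big(H_{n-1}^{-1}(\si_{n-1}w)\big)$ is not literally $(f_{n-1}^{-1})'(\si_{n-1}x)$ either. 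In the last paragraph of your proposal you write that the denominator should match $f_{n-1}'(f_{n-1}^{-1}(\si_{n-1}x))$; that is not exactly what comes out, and no algebraic rearrangement will strip out the $\eps$-corrections (there is no cancellation to find, since the two $\det DH$ factors are evaluated at different points). But this is the same discrepancy that already lives inside the paper: the proof in Appendix A concludes with $(f^{-1}_{n-1,\eps})'(\si_{n-1}x)\cdot f_{n-1}'(f_{n-1,\eps}(\si_{n-1}x))$, while the lemma is displayed with the unperturbed $f_{n-1}^{-1}$ and $f_{n-1}$. If you state your conclusion with the perturbed quantities (or interpret the lemma's $f_{n-1}^{-1}$ as a shorthand for the straightening map $\phi_{n-1}^{-1}$), your argument is complete and correct; if you try to force literal equality with the unperturbed $f_{n-1}^{-1}$ you will not succeed, and that is not a flaw in your method.
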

\begin{proof}
Let us calculate $ \Jac RF(w) $ in terms of partial derivatives of $ \eps $ and $ \de $. Recall the equations \eqref{di-y delta-1 estimation}, \eqref{di-z delta-1 estimation}, \eqref{di-y eps-1 estimation} and \eqref{di-z eps-1 estimation}. Let us express $ \Jac RF $ in terms of these expressions \ssk
\begin{align*}
& \Jac RF(w) = \di_y \eps_1(w) \cdot \di_z \de_1(w) - \di_z \eps_1(w) \cdot \di_y \de_1(w) \\ 
&= \ \Big[\, \big\{\, f'(f_{\eps}(\si_0 x)) \cdot \{\, \di_y \eps \circ (F \circ H^{-1}(\si_0 w)) + \di_z \eps \circ (F \circ H^{-1}(\si_0 w)) \cdot \di_x \de \circ H^{-1}(\si_0 w) \} \\
& \quad + \di_z \eps \circ (F^2 \circ H^{-1}(\si_0 w)) \cdot \\
& \qquad   \{ \; \di_y \de \circ (F \circ H^{-1}(\si_0 w)) + \di_z \de \circ (F \circ H^{-1}(\si_0 w)) \cdot \di_x \de \circ H^{-1}(\si_0 w)  \, \}  \big\} 
\cdot \di_y \phi^{-1}(\si_0 w) \\[0.4em] 
& \quad 
+ \big\{\, f'(f_{\eps}(\si_0 x)) \cdot \di_z \eps \circ (F \circ H^{-1}(\si_0 w)) + \di_z \eps \circ (F^2 \circ H^{-1}(\si_0 w)) \cdot \di_z \de \circ (F \circ H^{-1}(\si_0 w)) \big\} \\
&\qquad \quad \cdot \big\{ \di_y \de \circ H^{-1}(\si_0 w) + \di_z \de \circ H^{-1}(\si_0 w) \cdot \frac{d}{dy}\; \de (\si_0 y, f^{-1}(\si_0 y),0) \big\} \,\Big] \\
& \quad \cdot  \  \Big[\, \big\{\, \di_y \de \circ (F \circ H^{-1}(\si_0 w)) + \di_z \de \circ (F \circ H^{-1}(\si_0 w)) \cdot \di_x \de \circ H^{-1}(\si_0 w) \big\}  \cdot \di_z \phi^{-1}(\si_0 w) \\ &\qquad 
+ \di_z \de \circ (F \circ H^{-1}(\si_0 w)) \cdot \di_z \de \circ H^{-1}(\si_0 w)\, \Big] \\
& \ - \Big[\,\big \{\, f'(f_{\eps}(\si_0 x)) \cdot \{\, \di_y \eps \circ (F \circ H^{-1}(\si_0 w)) + \di_z \eps \circ (F \circ H^{-1}(\si_0 w)) \cdot \di_x \de \circ H^{-1}(\si_0 w) \} \\
& \qquad + \di_z \eps \circ (F^2 \circ H^{-1}(\si_0 w)) \\
& \qquad \cdot \{ \;  \di_y \de \circ (F \circ H^{-1}(\si_0 w)) + \di_z \de \circ (F \circ H^{-1}(\si_0 w)) \cdot \di_x \de \circ H^{-1}(\si_0 w) \,  \} \big\} \cdot \di_z \phi^{-1}(\si_0 w) \\[0.4em] 
& \quad + \big\{\, f'(f_{\eps}(\si_0 x)) \cdot \di_z \eps \circ (F \circ H^{-1}(\si_0 w)) + \di_z \eps \circ (F^2 \circ H^{-1}(\si_0 w)) \cdot \di_z \de \circ (F \circ H^{-1}(\si_0 w)) \big\} \\
&\qquad \quad \cdot  \di_z \de \circ H^{-1}(\si_0 w)  \Big] \\
& \quad \cdot \Big[ \, \big\{ \di_y \de \circ (F \circ H^{-1}(\si_0 w)) + \di_z \de \circ (F \circ H^{-1}(\si_0 w)) \cdot \di_x \de \circ H^{-1}(\si_0 w) \big\}  \cdot \di_y \phi^{-1}(\si_0 w) \\
&\quad \ \ + \di_z \de \circ (F \circ H^{-1}(\si_0 w)) \cdot \big\{\,\di_y \de \circ H^{-1}(\si_0 w) + \di_z \de \circ H^{-1}(\si_0 w) \cdot \frac{d}{dy}\; \de (\si_0 y, f^{-1}(\si_0 y),0) \big\} \Big]  \\[-1em]
\end{align*}
On the above equation, let us denote some factors to be $ A $, $ B $, $ C $ and $ D $ as follows \ssk
% \label{ABCD}
\begin{align*}
A &= \ f'(f_{\eps}(\si_0 x)) \cdot \{\, \di_y \eps \circ (F \circ H^{-1}(\si_0 w)) + \di_z \eps \circ (F \circ H^{-1}(\si_0 w)) \cdot \di_x \de \circ H^{-1}(\si_0 w) \} \\
& \qquad + \di_z \eps \circ (F^2 \circ H^{-1}(\si_0 w)) \\
& \qquad \ \ \cdot \{ \; \di_y \de \circ (F \circ H^{-1}(\si_0 w)) + \di_z \de \circ (F \circ H^{-1}(\si_0 w)) \cdot \di_x \de \circ H^{-1}(\si_0 w)  \, \} \\[0.4em]
B &= \ f'(f_{\eps}(\si x)) \cdot \di_z \eps \circ (F \circ H^{-1}(\si_0 w)) + \di_z \eps \circ (F^2 \circ H^{-1}(\si_0 w)) \cdot \di_z \de \circ (F \circ H^{-1}(\si_0 w)) \\[0.4em]
C &= \ \di_y \de \circ (F \circ H^{-1}(\si_0 w)) + \di_z \de \circ (F \circ H^{-1}(\si_0 w)) \cdot \di_x \de  \circ H^{-1}(\si_0 w) \\
D &= \ \di_y \de \circ H^{-1}(\si_0 w) + \di_z \de \circ H^{-1}(\si_0 w) \cdot \frac{d}{dy}\; \de (\si_0 y, f^{-1}(\si_0 y),0) . \\[-1em]
\end{align*}
Let us calculate $  A \cdot \di_z \de \circ (F \circ H^{-1}(\si w)) - BC $ for later use \ssk
\begin{align*}
& \ A \cdot \di_z \de \circ (F \circ H^{-1}(\si_0 w)) - BC \\
&= \ \Big[\, f'(f_{\eps}(\si_0 x)) \cdot \{\, \di_y \eps \circ (F \circ H^{-1}(\si_0 w)) + \di_z \eps \circ (F \circ H^{-1}(\si_0 w)) \cdot \di_x \de \circ H^{-1}(\si_0 w) \} \\
& \quad + \di_z \eps \circ (F^2 \circ H^{-1}(\si_0 w)) \\
& \quad \ \cdot  \{ \; \di_y \de \circ (F \circ H^{-1}(\si_0 w)) + \di_z \de \circ (F \circ H^{-1}(\si_0 w)) \cdot \di_x \de \circ H^{-1}(\si_0 w)  \, \} \, \Big] \\
& \quad \ \cdot \di_z \de \circ (F \circ H^{-1}(\si_0 w)) \\
& \quad - \Big[\,f'(f_{\eps}(\si_0 x)) \cdot \di_z \eps \circ (F \circ H^{-1}(\si_0 w)) + \di_z \eps \circ (F^2 \circ H^{-1}(\si_0 w)) \cdot \di_z \de \circ (F \circ H^{-1}(\si_0 w))\, \Big] \\
&\quad \ \cdot \Big[ \, \di_y \de \circ (F \circ H^{-1}(\si_0 w)) + \di_z \de \circ (F \circ H^{-1}(\si_0 w)) \cdot \di_x \de  \circ H^{-1}(\si_0 w) \, \Big] \\[0.5em]   \numberthis  \label{A di-z - BC, calculation}
&= \  f'(f_{\eps}(\si_0 x)) \cdot \big[\, \di_y \eps \circ (F \circ H^{-1}(\si_0 w)) \cdot \di_z \de \circ (F \circ H^{-1}(\si_0 w)) \\[0.2em]
 &\qquad \qquad \qquad \qquad - \di_z \eps \circ (F \circ H^{-1}(\si_0 w)) \cdot \di_y \de \circ (F \circ H^{-1}(\si_0 w))\,\big] . \\[-1em]
\end{align*}  %\msk

\nin Then the above equation of $ \Jac RF $ is expressed as follows \ssk
\begin{align*}
&\di_y \eps_1(w) \cdot \di_z \de_1(w) - \di_z \eps_1(w) \cdot \di_y \de_1(w) \\
&= \ \Big[\, A \cdot \di_y \phi^{-1}(\si_0 w) + B D \,\Big] %\\ & \quad 
\cdot \Big[\, C \cdot \di_z \phi^{-1}(\si_0 w) + \di_z \de \circ (F \circ H^{-1}(\si_0 w)) \cdot \di_z \de \circ H^{-1}(\si_0 w)\, \Big] \\
& \quad - \Big[\, A \cdot \di_z \phi^{-1}(\si_0 w) + B \cdot \di_z \de \circ H^{-1}(\si_0 w)  \Big] %\\ & \quad
 \cdot \Big[\, C \cdot \di_y \phi^{-1}(\si_0 w) + \di_z \de \circ (F \circ H^{-1}(\si_0 w)) \cdot D \Big] \\
&= \ A \cdot \di_y \phi^{-1}(\si_0 w) \cdot \di_z \de \circ (F \circ H^{-1}(\si_0 w)) \cdot \di_z \de \circ H^{-1}(\si_0 w) + BCD \cdot \di_z \phi^{-1}(\si_0 w) \\
&\quad - \big[\, AD \cdot \di_z \phi^{-1}(\si_0 w) \cdot \di_z \de \circ (F \circ H^{-1}(\si_0 w)) + BC \cdot \di_z \de \circ H^{-1}(\si_0 w) \cdot \di_y \phi^{-1}(\si_0 w) \, \big] \\
%
%Recall the equations %\eqref{di-y phi estimation} and \eqref{di-z phi estimation} for $ \di_y \phi^{-1}(\si_0 w) $ and $ \di_z \phi^{-1}(\si_0 w) $ respectively. Let us expand $ D $ in the equation \eqref{ABCD}. Then above equation is continued as follows.
%
&= \ A \cdot (f^{-1}_{\eps})'(\si_0 x)\cdot \big\{\,\di_y \eps \circ H^{-1}(\si_0 w) + \di_z \eps \circ H^{-1}(\si_0 w)\cdot \frac{d}{dy}\;\de(\si_0 y, f^{-1}(\si_0 y),0)\, \big\} \phantom{******\;}\\
&\qquad \cdot \di_z \de \circ (F \circ H^{-1}(\si_0 w)) \cdot \di_z \de \circ H^{-1}(\si_0 w) \\
&\quad \ + BC \cdot \big\{ \,\di_y \de \circ H^{-1}(\si_0 w) + \di_z \de \circ H^{-1}(\si_0 w) \cdot \frac{d}{dy}\; \de (\si_0 y, f^{-1}(\si_0 y),0) \big\} \\
&\qquad \cdot (f^{-1}_{\eps})'(\si_0 x) \cdot \di_z \eps \circ H^{-1}(\si_0 w) \\
&\quad - \Big[\, A \cdot \big\{ \di_y \de \circ H^{-1}(\si_0 w) + \di_z \de \circ H^{-1}(\si_0 w) \cdot \frac{d}{dy}\; \de (\si_0 y, f^{-1}(\si_0 y),0) \big\} \cdot (f^{-1}_{\eps})'(\si_0 x) \\
&\qquad \cdot \di_z \eps \circ H^{-1}(\si_0 w) \cdot \di_z \de \circ (F \circ H^{-1}(\si_0 w)) %\\ &\quad \
 + BC \cdot \di_z \de \circ H^{-1}(\si_0 w) \cdot (f^{-1}_{\eps})'(\si_0 x) \\ 
&\qquad \cdot \big\{\,\di_y \eps \circ H^{-1}(\si_0 w) + \di_z \eps \circ H^{-1}(\si_0 w)\cdot \frac{d}{dy}\;\de(\si_0 y, f^{-1}(\si_0 y),0)\, \big\}\, \Big] \\
&= \ A \cdot (f^{-1}_{\eps})'(\si_0 x) \cdot \di_z \de \circ (F \circ H^{-1}(\si_0 w)) \\
&\qquad \cdot \big[ \, \di_y \eps \circ H^{-1}(\si_0 w) \cdot \di_z \de \circ H^{-1}(\si_0 w) - \di_z \eps \circ H^{-1}(\si_0 w) \cdot \di_y \de \circ H^{-1}(\si_0 w)\, \big] \\
&\quad - BC \cdot (f^{-1}_{\eps})'(\si_0 x) \\
&\qquad \cdot \big[ \, \di_y \eps \circ H^{-1}(\si_0 w) \cdot \di_z \de \circ H^{-1}(\si_0 w) - \di_z \eps \circ H^{-1}(\si_0 w) \cdot \di_y \de \circ H^{-1}(\si_0 w) \, \big] \\[0.3em]
&= \ \big[\, A \cdot \di_z \de \circ (F \circ H^{-1}(\si_0 w)) - BC \,\big] \cdot (f^{-1}_{\eps})'(\si_0 x) \\
&\qquad \cdot \big[ \, \di_y \eps \circ H^{-1}(\si_0 w) \cdot \di_z \de \circ H^{-1}(\si_0 w) - \di_z \eps \circ H^{-1}(\si_0 w) \cdot \di_y \de \circ H^{-1}(\si_0 w) \, \big] . \\[-1em]
\end{align*}  %\bsk

\nin By the equation \eqref{A di-z - BC, calculation}, the above equation is continued as follows \ssk
\begin{align*}
&= \ (f^{-1}_{\eps})'(\si_0 x) \cdot \big[ \, \di_y \eps \circ H^{-1}(\si_0 w) \cdot \di_z \de \circ H^{-1}(\si_0 w) - \di_z \eps \circ H^{-1}(\si_0 w) \cdot \di_y \de \circ H^{-1}(\si_0 w) \, \big] \phantom{**} \\[0.2em]
&\qquad \cdot f'(f_{\eps}(\si_0 x)) \cdot \big[\, \di_y \eps \circ (F \circ H^{-1}(\si_0 w)) \cdot \di_z \de \circ (F \circ H^{-1}(\si_0 w)) \\
 &\qquad \qquad \qquad \qquad - \di_z \eps \circ (F \circ H^{-1}(\si_0 w)) \cdot \di_y \de \circ (F \circ H^{-1}(\si_0 w))\,\big] \\
&= \ f'(f_{\eps}(\si_0 x)) \cdot (f^{-1}_{\eps})'(\si_0 x) \cdot \Jac F \circ (H^{-1}(\si_0 w)) \cdot \Jac F \circ (F \circ H^{-1}(\si_0 w)) . \\[-1em]
\end{align*}  %\msk
Similarly, $ \Jac R^nF(w) $ is expressed in terms of the partial derivatives of $ \eps_{n-1} $ and $ \de_{n-1} $ by induction \ssk
\begin{align*}
& \ \Jac R^nF(w) = \ (f^{-1}_{n-1,\;\eps})'(\si_{n-1} x) \cdot f_{n-1}'(f_{n-1,\;\eps}(\si_{n-1} x)) \\
&\qquad \qquad \qquad \qquad \cdot \Jac F_{n-1} \circ (H_{n-1}^{-1}(\si_{n-1} w)) \cdot \Jac F_{n-1} \circ (F_{n-1} \circ H_{n-1}^{-1}(\si_{n-1} w)) .
\end{align*}
\end{proof}

\bsk

\section{$ C^1 $ conjugation of H\'enon-like map $ F \in \NN $}

\begin{lem}
Let $ F $ and $ \widetilde F $ be H\'enon-like maps. Suppose that $ C^1 $ diffeomorphism $ \Phi \colon B \ra B $ is a conjugation between $ F $ and $ \widetilde F $ where $ \Phi(w) = (x,\ y,\ \varphi(y,z)) $. Then $ F \in \NN $ if and only if $ \widetilde F \in \NN $.
\end{lem}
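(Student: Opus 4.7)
The plan is to exploit the triangular structure of $\Phi$ and reduce the $\NN$-condition for $\widetilde F$ to the $\NN$-condition for $F$ via a single functional identity plus the chain rule.

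First I would extract the analytic content of the conjugation. Since $\Phi$ preserves the first two coordinates and both $F$, $\widetilde F$ are H\'enon-like (so their second coordinates are just $\pi_x$), equating third coordinates in $\widetilde F \circ \Phi = \Phi \circ F$ yields the identity
\begin{equation*}
\widetilde\de\bigl(x,\,y,\,\varphi(y,z)\bigr) \;=\; \varphi\bigl(x,\,\de(x,y,z)\bigr) \quad \text{for all } (x,y,z)\in B.
\end{equation*}
Differentiating this identity in each of $x,y,z$ and using $\di_z\varphi\neq 0$ (forced by $\Phi$ being a diffeomorphism), I would solve for $\di_x\widetilde\de\circ\Phi$, $\di_y\widetilde\de\circ\Phi$, $\di_z\widetilde\de\circ\Phi$ at any point $w\in B$ in terms of the derivatives of $\de$ at $w$ and the partial derivatives of $\varphi$ at $(y,z)$ and at $(x,\de(w))$.

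Second, I would test the $\NN$-condition for $\widetilde F$ at the point $\Phi(w)$. From $\widetilde F\circ\Phi = \Phi\circ F$ I have $\widetilde F(\Phi(w)) = \Phi(F(w))$, so the three derivative formulas applied at $F(w)$ in place of $w$ supply $\di_y\widetilde\de$ and $\di_z\widetilde\de$ at $\widetilde F(\Phi(w))$. The critical observation is that $\pi_y F(w) = x$ and $\pi_z F(w) = \de(w)$, so the $\varphi$-derivatives at $(\pi_y F(w),\pi_z F(w))$ that enter on the upper level coincide exactly with those at $(x,\de(w))$ appearing in the formula for $\di_x\widetilde\de(\Phi(w))$. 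Assembling
\[
\di_y\widetilde\de(\widetilde F(\Phi(w))) \;+\; \di_z\widetilde\de(\widetilde F(\Phi(w)))\cdot \di_x\widetilde\de(\Phi(w))
\]
and expanding, the $\di_y\varphi$-contributions cancel and the remainder collapses to
\begin{equation*}
\di_z\varphi\bigl(\pi_x F(w),\,\de(F(w))\bigr)\,\cdot\,\bigl[\,\di_y\de(F(w)) + \di_z\de(F(w))\cdot \di_x\de(w)\,\bigr].
\end{equation*}

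Third, since $\Phi$ is a diffeomorphism of $B$, $\di_z\varphi$ never vanishes on $B$, so the prefactor above is nonzero. Hence the $\NN$-defining identity for $\widetilde F$ at $\Phi(w)$ holds iff the bracketed quantity vanishes at $w$, i.e.\ iff the $\NN$-defining identity for $F$ holds at $w$. As $\Phi$ is a self-diffeomorphism of $B$, $w\mapsto \Phi(w)$ is surjective, so these two conditions are equivalent on all of $B$, which is the desired equivalence $F\in\NN \Leftrightarrow \widetilde F\in\NN$. The one step that genuinely needs care is the bookkeeping behind the cancellation of the $\di_y\varphi$-terms in the expansion above; this cancellation is the analytic reflection of the fact that the class $\NN$ is intrinsic to the $y$-fibered structure of a H\'enon-like map and is therefore preserved under any diffeomorphism that respects that fibration.
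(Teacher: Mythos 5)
Your proposal is correct, and the cancellation you flag is real: writing $a=\di_y\varphi(x,\de(w))$, $b=\di_z\varphi(x,\de(w))$, $c=\di_z\varphi(\pi_x F(w),\de(F(w)))$, differentiation of the scalar identity $\widetilde\de(x,y,\varphi(y,z))=\varphi(x,\de(w))$ gives $\di_x\widetilde\de\circ\Phi=a+b\,\di_x\de$, $\di_z\widetilde\de\circ\Phi = b\,\di_z\de/\di_z\varphi(y,z)$, and $\di_y\widetilde\de\circ\Phi = b\,\di_y\de - \di_z\widetilde\de\circ\Phi\cdot\di_y\varphi(y,z)$; applying the first at $w$ and the second and third at $F(w)$ (where the inner $\varphi$-arguments $(\pi_y F(w),\pi_z F(w))=(x,\de(w))$ match precisely), the $a$-terms cancel and you are left with $c\cdot[\di_y\de\circ F(w)+\di_z\de\circ F(w)\cdot\di_x\de(w)]$, as you claim. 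This is the same identity the paper arrives at, but your route is more elementary: the paper instead differentiates the full conjugacy $\Phi\circ F=\widetilde F\circ\Phi$, extracts the third row of $D\Phi\circ F\cdot DF = D\widetilde F\circ\Phi\cdot D\Phi$ (treating $\varphi$ formally as a function of $(x,y,z)$ with $\di_x\varphi\equiv 0$), and then wades through a substantially longer algebraic rearrangement — multiplying by $\di_z\varphi\circ F(w)\cdot\di_z\varphi(w)$ to clear denominators and discarding the many $\di_x\varphi$-terms — before the same $\di_z\varphi\circ F^2(w)$ prefactor emerges. Your scalar-identity derivation produces the same three relations with the matrix bookkeeping stripped away, and your observation that the $\NN$-class is intrinsic to the $y$-fibered structure is exactly the conceptual content that the paper's long computation obscures. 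What each approach buys: yours is shorter and makes the mechanism transparent; the paper's matrix computation is more mechanical but would generalize unchanged if $\Phi$ also mixed the first coordinate, since it does not rely on isolating a scalar identity for the third coordinate alone.
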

\begin{proof}
The coordinates of $ F $ and $ \widetilde F $ are as follows
\msk
\begin{equation}
\begin{aligned}
F(x,y,z) &= (f(x) - \eps(w),\ x,\ \de(w)) \\[0.2em]
\widetilde F &= (\widetilde f(x) - \widetilde \eps(w),\ x,\ \widetilde \de(w)) .
\end{aligned} \msk
\end{equation}
\nin We may assume that $ \Phi \circ F = \widetilde F \circ \Phi $. Thus by chain rule 
$$ D\Phi \circ F(w) \cdot DF(w) = D\widetilde F \circ \Phi(w) \cdot D\Phi (w) . $$
Let us consider the map $ \varphi(w) = \varphi(x,y,z) $ rather than $ \varphi(y,z) $. Then
\msk
\begin{align*}
& \begin{pmatrix}
1& 0& 0 \\[0.2em]
0& 1& 0 \\[0.2em]
\di_x \varphi & \di_y \varphi & \di_z \varphi 
\end{pmatrix} \cdot
\begin{pmatrix}
f'(x) - \di_x \eps & -\di_y \eps & -\di_z \eps \\[0.2em]
1& 0& 0 \\[0.2em]
\di_x \de & \di_y \de & \di_z \de
\end{pmatrix} \\[0.8em]
& \qquad \qquad \qquad \qquad \qquad \qquad =
\begin{pmatrix}
\widetilde f'(x) - \di_x \widetilde \eps & -\di_y \widetilde \eps & -\di_z \widetilde \eps \\[0.2em]
1& 0& 0 \\[0.2em]
\di_x \widetilde \de & \di_y \widetilde \de & \di_z \widetilde \de
\end{pmatrix} \cdot
\begin{pmatrix}
1& 0& 0 \\[0.2em]
0& 1& 0 \\[0.2em]
\di_x \varphi & \di_y \varphi & \di_z \varphi 
\end{pmatrix} . \\[-1em]
\end{align*} 
Then we obtain \msk
\begin{align*}
 \di_x \varphi \circ F(w) \cdot \{ \,f'(x) - \di_x \eps(w)\, \} + \di_y \varphi \circ F(& w) + \di_z \varphi \circ F(w) \cdot \di_x \de(w) \\
& = \ \di_x \widetilde \de \circ \Phi(w) + \di_z \widetilde \de \circ \Phi(w) \cdot \di_x \varphi(w) \\[0.2em]
- \di_x \varphi \circ F(w) \cdot  \di_y \eps(w) + \di_z \varphi \circ F(w) \cdot \di_y \de(w) &= \ \di_y \widetilde \de \circ \Phi(w) + \di_z \widetilde \de \circ \Phi(w) \cdot \di_y \varphi(w) \\[0.2em] 
- \di_x \varphi \circ F(w) \cdot  \di_z \eps(w) + \di_z \varphi \circ F(w) \cdot \di_z \de(w) &= \ \di_z \widetilde \de \circ \Phi(w) \cdot \di_z \varphi(w) . \\[-1em]
\end{align*}  %\msk
Recall that $ F \in \NN $ means that $ \di_y \de \circ F(w) + \di_z \de \circ F(w) \cdot \di_x \de(w) \equiv 0 $. Then
\msk
\begin{align*}
& \quad \ \big[\, \di_y \widetilde \de \circ (\Phi \circ F)(w) + \di_z \widetilde \de \circ (\Phi \circ F)(w) \cdot \di_x \widetilde \de \circ \Phi(w) \, \big] \cdot \di_z \varphi \circ F(w) \cdot \di_z \varphi(w) \\[0.4em]
&= \ \di_z \varphi \circ F^2(w) \cdot \di_z \varphi \circ F(w) \cdot \di_z \varphi(w) \cdot \big[\, \di_y \de \circ F(w) + \di_z \de \circ F(w) \cdot \di_x \de(w)\,\big] \\[0.2em]
& \qquad - \di_x \varphi \circ F^2(w) \cdot \di_y \eps \circ F(w) \cdot \di_z \varphi \circ F(w) \cdot \di_z \varphi(w) \\[0.2em]
& \qquad - \di_x \varphi \circ F^2(w) \cdot \di_z \eps \circ F(w) \cdot \di_x \varphi \circ F(w) \cdot \di_z \varphi(w) \cdot \{ \,f'(x) - \di_x \eps(w) \,\} \\[0.2em]
& \qquad - \di_x \varphi \circ F^2(w) \cdot \di_z \eps \circ F(w) \cdot \di_z \varphi \circ F(w) \cdot \di_z \varphi(w) \cdot \di_x \de(w) \\[0.2em]
& \qquad + \di_x \varphi \circ F^2(w) \cdot \di_z \eps \circ F(w) \cdot \big[ -\di_x \varphi \circ F(w) \cdot \di_z \eps(w) + \di_z \varphi \circ F(w) \cdot \di_z \de(w) \,\big] \cdot \di_x \varphi(w) \\[0.2em]
& \qquad + \di_z \varphi \circ F^2(w) \cdot \di_z \de \circ F(w) \cdot \di_x \varphi \circ F(w) \cdot \di_z \varphi(w) \cdot \{ \,f'(x) - \di_x \eps(w) \,\}\\[0.2em]
& \qquad - \di_z \varphi \circ F^2(w) \cdot \di_z \de \circ F(w) \cdot \big[ -\di_x \varphi \circ F(w) \cdot \di_z \eps(w) + \di_z \varphi \circ F(w) \cdot \di_z \de(w) \,\big] \cdot \di_x \varphi(w) .  \\[-1em]
\end{align*}  %\msk
The detailed calculation to obtain above equation is left to the reader. The fact that $ \di_x \varphi(w) \equiv 0 $ implies that
\msk
\begin{align*}
& \quad \ \big[\, \di_y \widetilde \de \circ (\Phi \circ F)(w) + \di_z \widetilde \de \circ (\Phi \circ F)(w) \cdot \di_x \widetilde \de \circ \Phi(w) \, \big] \cdot \di_z \varphi \circ F(w) \cdot \di_z \varphi(w) \\[0.3em]
&= \ \di_z \varphi \circ F^2(w) \cdot \di_z \varphi \circ F(w) \cdot \di_z \varphi(w) \cdot \big[\, \di_y \de \circ F(w) + \di_z \de \circ F(w) \cdot \di_x \de(w)\,\big] .
\end{align*}  %\msk
Since the map $ \Phi(w) = (x,y, \varphi(y,z)) $ is a diffeomorphism, $ \di_z \varphi(y,z) $ is never zero for all $ w \in B $. Thus we have the following equation
\msk
\begin{equation*}
\begin{aligned}
& \di_y \widetilde \de \circ (\Phi \circ F)(w) + \di_z \widetilde \de \circ (\Phi \circ F)(w) \cdot \di_x \widetilde \de \circ \Phi(w)  \\[0.3em] 
& \qquad \qquad =  \ \di_z \varphi \circ F^2(w) \cdot \big[\, \di_y \de \circ F(w) + \di_z \de \circ F(w) \cdot \di_x \de(w)\,\big] .
\end{aligned}
\end{equation*}
Hence, $ F \in \NN $ if and only if $ \widetilde F \in \NN $. 

\end{proof}

\end{appendices}

\bsk

%%%%%%%%%%%%%%%%%%%%%%%%%%%%%%%%%%%%%%%%%%%%%%%%%%%%%%%%%%%%%%%%%%%%%%%%%%%%%%%%%%%%%%%%%%%%%%%%%%%%%%%%%%%%%%%%%%%%%%%%
%The following is the bibliography of this paper.
%%%%%%%%%%%%%%%%%%%%%%%%%%%%%%%%%%%%%%%%%%%%%%%%%%%%%%%%%%%%%%%%%%%%%%%%%%%%%%%%%%%%%%%%%%%%%%%%%%%%%%%%%%%%%%%%%%%%%%%%

\bibliographystyle{alpha}

\end{document}